\shorttitle{Asymptotics of some Poisson cluster processes} 
\providecommand{\A}{}
\providecommand{\R}{}
\providecommand{\S}{}
\providecommand{\N}{}
\providecommand{\C}{}
\renewcommand{\A}{\mathbb{A}}
\renewcommand{\R}{\mathbb{R}}
\renewcommand{\S}{\mathbb{S}}
\renewcommand{\N}{\mathbb{N}}
\renewcommand{\C}{\mathbb{C}}
\renewcommand{\leq}{\leqslant}
\renewcommand{\geq}{\geqslant}
\newcommand*\diff{\mathop{}\!\mathrm{d}}
\newcommand{\E}[1]{{\mathbb E}\left[#1\right]}
\newcommand{\p}[1]{{\mathbb P}\left(#1\right)}
\newcommand{\I}[1]{{\mathbbm 1}_{\{#1\}}}
\newcommand{\Cprob}[2]{\mathbb{P}\left(\left. #1 \; \right| \; #2\right)}
\newcommand{\defeq}{\vcentcolon=}
\newcommand{\eqdef}{=\vcentcolon}
\newcommand\cA{\mathcal A}
\newcommand\cE{\mathcal E}
\newcommand\cO{\mathcal O}
\newcommand{\bX}{\mathbf{X}}
\newcommand{\bx}{\mathbf{x}}
\newcommand{\bt}{\mathbf{t}}
\newcommand{\bZ}{\mathbf{0}}
\newcommand{\bTheta}{\mathbf{\Theta}}
\begin{document}

\title{Tail asymptotics and precise large deviations \\ for some Poisson cluster processes} 

\authorone[Universit\'{e} de Lausanne and Sorbonne Universit\'{e}]{Fabien Baeriswyl} 
\authortwo[Universit\'{e} de Lausanne]{Val\'{e}rie Chavez-Demoulin} 
\authorthree[Sorbonne Universit\'{e}]{Olivier Wintenberger} 


\addressone{D\'{e}partement des Op\'{e}rations, Anthropole, CH-1015 Lausanne, Suisse} 
\emailone{fabien.baeriswyl@unil.ch, fabien.baeriswyl@sorbonne-universite.fr} 
\addresstwo{D\'{e}partement des Op\'{e}rations, Anthropole, CH-1015 Lausanne, Suisse}
\addressthree{Laboratoire de Probabilit\'{e}s, Statistique et Mod\'{e}lisation, Sorbonne Universit\'{e}, Campus Pierre et Marie Curie, 4 place Jussieu, 75005 Paris, France}

\begin{abstract}
    We study the tail asymptotics of two functionals (the maximum and the sum of the marks) of a generic cluster in two sub-models of the marked Poisson cluster process, namely the renewal Poisson cluster process and the Hawkes process. Under the hypothesis that the governing components of the processes are regularly varying, we extend results due to \cite{fgams06} and \cite{bwz19} notably, relying on Karamata's Tauberian Theorem to do so. We use these asymptotics to derive precise large deviation results in the fashion of \cite{km99} for the above-mentioned processes. 
\end{abstract}

\keywords{Renewal Poisson cluster process; Hawkes process; random maxima; random sums. }

\ams{60G70}{60G55; 60F10}

\section{Introduction}
\label{sct:introduction}
In this paper, we study the asymptotic properties of processes exhibiting clustering behaviour. Such processes are common in applications: for instance, earthquakes in seismology,  where a main shock has the ability to trigger a series of secondary shocks in a specific spatio-temporal neighbourhood; but also accidents giving rise to a series of subsequent claims in non-life insurance or heavy rainfall in meteorology to name a few. We will focus on two different processes that have effectively been used in these fields. The Hawkes process has been introduced in the pioneer works of \cite{vo83} and \cite{ogata88}, and has found applications in earthquake modeling (see e.g. \cite{mvj92}), in finance (see e.g. \cite{cd05}, \cite{hawkes18}), in genome analysis (see \cite{rb10}) or in insurance (see \cite{swishchuk18}). The renewal Poisson cluster process is a tool of choice in an insurance context for modelling series of claims arising from a single event (see e.g. \cite{mikosch09} for a reference textbook),  as well as in teletraffic modelling (see \cite{fgams06}) and in meteorology and weather forecast (see e.g. \cite{ffgl87} or \cite{ock00}).

The above processes, described heuristically and in specific contexts above, are part of the class of the so-called point processes: for a comprehensive overview, see the monographs of \cite{dvj03} and \cite{dvj08} or, more recently, and with connection to martingale theory, see \cite{bremaud20}. Point process theory is an elegant framework describing the properties of random points occurring in general spaces. In both cases the temporal marked point process  $N$ possesses a representation as an infinite sum of Dirac measures (recall that the Dirac measure $\varepsilon$ on $\cA$ satisfies for every $A\in \cA$ that $\varepsilon_{x}(A) = 1$ if $x \in A$ and $\varepsilon_{x}(A) = 0$ otherwise): $$N(\cdot) = \sum_{i=1}^{\infty} \varepsilon_{T_i, A_i}(\cdot)$$ where $T_i$ is the (random) time of occurrence of the $i$th event and $A_i$ is its associated mark. The specific temporal marked point processes that we are interested in are cluster point processes. More specifically, we will assume that there exists an immigration process, under which independent points arise at a Poissonian rate; then, each of these immigrant events has the ability to trigger new points, called first generation offspring events. We will then look at two submodels. One is the renewal Poisson cluster process. It is complete with the immigrant events and their first generation offsprings. The term ``renewal" comes from the fact that the times of the events form a renewal sequence. The other submodel is the Hawkes process in which every point of the first generation has the ability to generate new points, acting as an immigrant event, potentially generating therefore a whole cascade of points. Each immigrant event and its associated offspring events (whether direct children or indirect) form a generic cluster. 

We will study the tail asymptotics of the partial maxima and sums of a transformation $X=f(A)$, for some nonnegative real valued function $f$,  of the mark $A$ of any event of $N$. Determining the behaviour of the maximum  and the sum  at the level of the cluster decomposition of a process is crucial to obtain limit theorems for partial maxima and sums of the whole process over finite intervals, see e.g. \cite{st10}, \cite{kz15} or \cite{bwz19}.  Thus, we describe first a generic cluster from each of the above-mentioned processes.  

For the renewal Poisson cluster process, we will consider a distributional representation of the maximum of the marks in the generic cluster, denoted $H^{R}$, $$H^{R} \eqdist X \vee \bigvee_{j=1}^{K_A} X_j$$ where $X$ is a transformation $f(A)$ of the mark $A$ of the immigrant event, and $X_j$ is the transformed mark of the $j$th first-generation offspring event. The number of offspring events, $K_A$, is random and possibly dependent on $X$. In particular, we will let the vector $(X, K_A)$ be heavy-tailed, and assess whether the heavy-tailedness transfers to $H^{R}$. Details are relegated to Section 2. Note that under the hypothesis that $X$ and $K_A$ are independent the above distributional equation has received early consideration, e.g. in \cite{tw18} or \cite{jm06}, where it is shown that $H^{R}$ and $X$ belong to the same maximum domain of attraction of some extreme value distribution (MDA for short - see \cite{resnick}, \cite{dhf06} or \cite{ekm13} for references on extreme value theory). A more recent advance in the case where $X$ and $K_A$ are dependent is to be found in \cite{bmz22}, where a similar conclusion is reached about the MDA. Our emphasis is on the Fr\'{e}chet MDA, which allows a certain refinement on the characterisation of the tail asymptotics. 

We will also consider tail asymptotics for the sum functional, which for the very same renewal Poisson cluster process, and for a generic cluster, possesses the distributional representation $$D^{R} \eqdist X + \sum_{j=1}^{K_A} X_j$$ supposing again that $(X, K_A)$ is heavy-tailed, We will also assess whether the heavy-tailedness of $(X, K_A)$ transfers to $D^{R}$. This equation has received consideration under the hypothesis that $X$ and $K_A$ are independent, see \cite{fgams06}. We will retrieve their results in our framework. More recently, in the case of arbitrary dependence between $X$ and $K_A$, similar asymptotics have been derived in \cite{oc21}.

We will then derive the very same kind of tail asymptotics, for the very same functionals of a generic cluster in the context of the Hawkes process. The distributional representation associated with the maximum of the marks in a generic cluster, denoted $H^{H}$, is given by $$ H^{H} \eqdist X \vee \bigvee_{j=1}^{L_A} H_j^{H}$$ where $L_A$ is the number of first-generation offspring events $A$ of the event acting as immigrant, and $H_j$ is the maximum of the marks of the offsprings of the $j$th offspring of the immigrant event considered, itself acting as immigrant for further subranches of the cluster, emphasising once again the cascade structure of the Hawkes process. The equation for $H^H$ above is a special case of the higher-order Lindley equation, see \cite{kks94}. Note that $X=f(A)$ and $L_A$ are dependent through $A$. Letting $L_A$ be Poisson distributed with parameter $\kappa_A$, and $(X, \kappa_A)$ be heavy-tailed we assess whether this transfers to $H^{H}$. This functional has received attention in the recent work of \cite{bmz22}, where it was shown that $H^{H}$ has the same MDA as that of $X$. 

The distributional representation associated with the sum of the marks in a generic cluster in the Hawkes process, denoted $D^{H}$, is given by $$D^{H} \eqdist X + \sum_{j=1}^{L_A} D_j^{H}.$$ We will again let $(X, \kappa_A)$ be heavy-tailed, and assess whether this transfers to $D^{H}$. This distributional equation, with cascade structure, has been extensively studied: see e.g. \cite{bkp13}; but also, as a main stochastic modelling approach to Google's PageRank algorithm, see \cite{lsv07}, \cite{joc10}, \cite{vl10}, \cite{cloc14}, \cite{cloc17} and, even more closely related to our results, \cite{oc21}; in the context of random networks, see \cite{mr20} or \cite{m23}; for a recent, theoretical advance as well as application to queuing systems, see \cite{af18} or \cite{eah18}.

The way we will deal with heavy-tailedness is through the classical notion of regular variation, introduced by J. Karamata in the 20th century (see e.g. \cite{k33}), which specifies that the functions of interest behave, in a neighbourhood of infinity, like power-law functions. For a thorough, textbook treatment of the topic in univariate settings, see \cite{bgt89}; we rely on \cite{resnick}, \cite{resnick2}, \cite{dhf06} and \cite{mw23} for the multivariate case. 

The flexibility offered by our approach to the way we specify the regular variation of the governing components of our processes allows us, in the sequel, to extend results due to \cite{rs08}, \cite{fgams06}, \cite{hs08} or \cite{dfk10}, that all studied the asymptotics of the tail of distributional quantities such as $H$ and $D$ in the above examples, but under various assumptions on the relations of the tails of $X$ and $K_A$ for the renewal Poisson cluster process, respectively $X$ and $L_A$ for the Hawkes process.  

Finally, we use the results on the tails of $H$ and $D$ to derive (precise) large deviation principles for our processes of interest, in the flavour of \cite{sn79}, \cite{mn98}. The ``precise" terminology comes from the fact that we have exact asymptotic equivalence instead of logarithmic ones when assuming Cram\'{e}r's condition. Early results on precise large deviations in the case of non-random maxima and sums can be found in \cite{n69}, \cite{n69-2}, \cite{heyde67} and \cite{ch91}. The case of random maxima and sums of extended regularly varying random variables (a class containing regularly varying random variables) is to be found in \cite{km99}, and we will rely on their results to derive our very own precise large deviation results. Contributions in this area for another subclass of subexponential distributions, namely the class of consistently varying random variables, can be found in \cite{tsjz01} or \cite{nty04}; for precise large deviations results on (negatively) dependent sequences, see \cite{tang06} or \cite{liu09}.  

The organisation of the paper is as follows: in Section~2, we describe the main processes of interest, that are part of the Poisson cluster process family; in Section~3, we recall some important notions and characterisations of (multivariate) regular variation; in Section 4, we derive the tail asymptotics for the maximum of the marks in a generic cluster in the renewal Poisson cluster process; in Section 5, we derive the tail asymptotics for the sum of the marks in a generic cluster in the renewal Poisson cluster process; in Section~6, we derive the tail asymptotics for the maximum of the marks in a generic cluster in the Hawkes process; in Section~7, we derive the tail asymptotics for the sum of the marks in a generic cluster in the Hawkes process; in Section~8, we use the results from Section~4 to Section~7 to derive (precise) large deviations results for our processes of interest. 

\section*{Notation}

Vectors are usually in boldface. By ``i.i.d." we classically mean independent and identically distributed and, consistently, ``i.d." means identically distributed. We let $\lceil \cdot \rceil$ denote the upper integer part, $\lfloor \cdot \rfloor$ the lower integer part. For two functions $f(\cdot)$ and $g(\cdot)$, and $c \in \{0, \infty\}$, we note $f(x) = \cO(g(x)), \text{ as } x \rightarrow c$ whenever $\limsup_{x \rightarrow c} \lvert f(x) / g(x) \rvert  \leq M$, for some finite $M > 0$; $f(x) = o(g(x)), \text{ as } x \rightarrow c$ whenever $\lim_{x \rightarrow c} \lvert f(x) / g(x) \rvert =0$;  $f(x) \sim g(x)$, as $x \rightarrow c$ whenever $\lim_{x \rightarrow c}  f(x) /g(x) = 1.$ The product of two measures $\mu$ and $\nu$ is written as the tensor product $\mu \otimes \nu$. 

\section{Random functionals of clusters}

We formally introduce the general Poisson cluster process, a class which includes the processes discussed in Section~\ref{sct:introduction}, keeping the spirit of the presentation and (most) notations from \cite{bwz19}. As hinted in Section~\ref{sct:introduction}, this process is made up of two components: an immigration process and an offspring process. 

The immigration process, say $N_0$, is a marked homogeneous Poisson process (or marked PRM in short, for marked Poisson random measure) with representation given by: $$N_0(\cdot) \defeq \sum_{i=1}^{\infty} \varepsilon_{\Gamma_i, A_{i0}}(\cdot).$$
This point process has mean measure $\nu \text{Leb} \otimes F$, for $\nu > 0$, on the space $[0, \infty) \times \A$, where $\text{Leb}$ is the Lebesgue measure, $F$ is the common distribution function to all marks $(A_{i0})_{i \in \N}$, which take values on a measurable space $(\A, \cA)$, and where $\cA$ corresponds to the Borel $\sigma$-field on $\A$. In particular, this means that the sequence of times $(\Gamma_i)_{i \in \N}$, corresponding to the arrivals of immigrant events, is a homogeneous Poisson process with rate given by $\nu \text{Leb}$. Since the space $\A$ can be quite general, applying a transformation $f(\cdot): \A \rightarrow \R_+$ is natural, especially in practical applications. Note that we will also assume this transformation of the marks, i.e. we only consider nonnegative transformed marks in our models. For example, in a non-life insurance context, supposing that $A_{i0}$ represents the characteristics of the $i$th accident, $f(A_{i0})$ could represent the claim size pertaining to this accident. In subsequent sections, and to ease the notation, we shall denote $X_{i0} \defeq f(A_{i0})$. 

Conditioning on observing an immigration event at time $\Gamma_i$, the marked PRM $N_0$ is supplemented with an additional point process in $M_p([0, \infty) \times \A)$ (the space of locally finite point measures on $[0, \infty) \times \A$) that we denote by $G_{A_{i0}}$. The cluster of points $G_{A_{i0}}$, occurring after time $\Gamma_i$, augments $N_0$ with triggered, offspring points or events.

The offspring cluster process, conditioned on observing an immigrant event $(\Gamma_{i}, A_{i0})$, admits the representation $$G_{A_{i0}}(\cdot) \defeq \sum_{j=1}^{K_{A_{i0}}} \varepsilon_{T_{ij}, A_{ij}}(\cdot)$$ where $(T_{ij})_{1 \leq j \leq K_{A_{i0}}}$ forms a sequence of nonnegative random variables indicating, for a fixed $j$, the random time from the immigrant event occurring at time $\Gamma_i$ and the $j$th event of the cluster, and where $K_{A_{i0}}$ is a random variable with values in $\N_0$, corresponding to the number of events in the $i$th cluster. These events are the offspring of the immigrant event identified by $(\Gamma_i, A_{i0})$. A complete representation of the general Poisson cluster process is given by
\begin{equation*} N(\cdot) \defeq \sum_{i=1}^{\infty} \sum_{j=0}^{K_{A_{i0}}} \varepsilon_{\Gamma_i + T_{ij}, A_{ij}}(\cdot) \end{equation*} providing we set $T_{i0}=0$ for all $i \in \N$. 

The first functional of interest is the maximum of the marks in the $i$th cluster, defined by \begin{equation}\label{eq:hi} H_i \defeq  \bigvee_{j=0}^{K_{A_{i0}}} X_{ij}. \end{equation}

Above, for ease of notation, we have defined $X_{i0}=f(A_{i0})$; accordingly, we let $X_{ij} = f(A_{ij})$ for the transformation $f(\cdot) : \A \rightarrow \R_{+}$. The point process associated with the $i$th cluster is defined by $$C_i(\cdot) \defeq \varepsilon_{0, A_{i0}}(\cdot) + G_{A_{i0}}(\cdot).$$ It allows us to define the second functional of interest in this paper, namely the sum of all marks in the $i$th cluster, by \begin{equation}\label{eq:di} D_i \defeq \int_{[0, \infty)\times \A} f(a) C_i(\diff t, \diff a).\end{equation}

In Section 8, we will look at the whole process on a subset of the temporal axis: at the level of the point process $N$, the sum of all marks in the finite time interval $[0, T]$, for $T > 0$, is given by \begin{equation}\label{eq:st} S_T \defeq \int_{[0, T]\times \A} f(a) N(\diff t, \diff a).\end{equation}

From Section~4 to Section~7, we propose tail asymptotics for $H_i$ and $D_i$ in the settings of mainly two different submodels of the general Poisson cluster process, briefly described in the introduction, that we formally discuss next, keeping the presentation in \cite{bwz19}, but fully described in Example 6.3 of \cite{dvj03}. However, we refer to the former reference for a complete description. In our work, we also assume that the sequence of marks $(X_{ij})$ is i.i.d.  

\subsection{Mixed binomial Poisson cluster process}
    In this model, the assumptions on $N_0$ are kept unchanged and the $i$th cluster has a representation of the form $$C_i(\cdot) = \varepsilon_{0, A_{i0}}(\cdot) + G_{A_{i0}}(\cdot) = \varepsilon_{0, A_{i0}}(\cdot) + \sum_{j=1}^{K_{A_{i0}}} \varepsilon_{W_{ij}, A_{ij}}(\cdot)$$ where $\left(K_{A_{i0}}, (W_{ij})_{j \geq 1}, (A_{ij})_{j \geq 0}\right)_{i \geq 0}$ is an i.i.d. sequence, the sequence $(A_{ij})_{j \geq 0}$ is also i.i.d. for any fixed $i=1, 2, \ldots$ and, finally, $(A_{ij})_{j \geq 1}$ is independent of both $K_{A_{i0}}$ and $(W_{ij})_{j \geq 1}$ for any $i = 1, 2, \ldots$. Note that this latter statement does not exclude dependence between $A_{i0}$ and $K_{A_{i0}}$ (respectively $(W_{ij})_{j \geq 1}$). Additionally, it is assumed that $\E{K_A} < \infty$, where $K_A$ denotes a generic random quantity distributed as $K_{A_{i0}}.$
\subsection{Renewal Poisson cluster process}

In this model, the $i$th cluster has the representation \begin{equation}\label{eq:rpcr} C_i(\cdot) = \varepsilon_{0, A_{i0}}(\cdot) + G_{A_{i0}}(\cdot) = \varepsilon_{0, A_{i0}}(\cdot) +  \sum_{j=1}^{K_{A_{i0}}} \varepsilon_{T_{ij}, A_{ij}}(\cdot)\end{equation} where all the assumptions from Section 2.1 hold, except that now, we denote the occurrence time sequence of the offspring events by $(T_{ij})_{j \geq 1}$ to emphasise that this forms a renewal sequence, that is, for any fixed $i = 1, 2, \ldots$, $T_{ij} = W_{i1} + \cdots + W_{ij}.$ Note that this process is such that every Poisson immigrant has only $K_{A_{i0}}$ first generation offspring events. These points cannot generate further generations themselves, in contrast with the Hawkes process that we will introduce next. 

Applying the transformation $f$ on the marks of the events, we will, in Section~4 and Section~5, derive tail the asymptotics of generic versions of Equation~\eqref{eq:hi} and Equation~\eqref{eq:di}, given by: 
\begin{enumerate}
    \item for the maximum, \begin{equation}\label{eq:rp-h} H^{R} \eqdist X \vee \bigvee_{j=1}^{K_{A}} X_j;\end{equation}
    \item for the sum, \begin{equation}\label{eq:rp-d} D^{R} \eqdist X + \sum_{j=1}^{K_{A}} X_j. \end{equation}
\end{enumerate} We isolate $X \defeq f(A)$ from the rest of the transformed claims $(X_j) \defeq \big(f(A_j)\big)$, to emphasise the possible dependence between $X$ and $K_{A}$. 

\begin{rem}\label{rem:simple-pro}
    These two processes have been considered in the monograph \cite{mikosch09}. The mixed binomial Poisson cluster process and the renewal Poisson cluster process are very similar in their description, and because their sole difference is the placement of the points along the time axis, we focus - in what follows - on the renewal Poisson cluster process. The results of Section 4 and Section 5 are directly applicable to the mixed binomial Poisson cluster process; the results of Section 8 also apply, upon the use of an alternative justification regarding the left-over effects to be discussed in that section. We refer to \cite{bwz19} and \cite{bmz22} for justifications. 
\end{rem}

\subsection{Hawkes process}

The specificity of the Hawkes process is that the clusters have a recursive pattern, in the sense that each point, whether immigrant or offspring, has the ability to act as an immigrant and generate a new cluster. To obtain the representation of the $i$th cluster $G_{A_i}$, one typically introduces a time shift operator $\theta_t$, as in \cite{bwz19}. Let $m(\cdot) = \sum_{j=1}^{\infty} \varepsilon_{t_j, a_j}(\cdot)$ be a point measure: then, the time-shift operator is defined by $$\theta_t m(\cdot) =  \sum_{j=1}^{\infty} \varepsilon_{t_j + t, a_j}(\cdot)$$ for all $t \geq 0$. Then, the (recursive) representation of the $i$th cluster, conditioning on observing an immigration event $(\Gamma_i, A_{i0}),$ is given by $$C_i(\cdot) = \varepsilon_{0, A_{i0}}(\cdot) + G_{A_{i0}}(\cdot) = \varepsilon_{0, A_{i0}}(\cdot) +  \sum_{j=1}^{L_{A_{i0}}} \left(\varepsilon_{\tau_{ij}^1, A_{ij}^1}(\cdot) + \theta_{\tau_{ij}^1} G_{A_{ij}^1}(\cdot) \right)$$ where, given $A_{i0}$, the first-generation offspring process $N_{A_{i0}}(\cdot) \defeq \sum_{j=1}^{L_{A_{i0}}} \varepsilon_{\tau_{ij}^1, A_{ij}^1}(\cdot) $ is again a Poisson process, this time with (random) mean measure $\int h(s, A_{i0}) \diff s \otimes F$, and where the sequence $(G_{A_{ij}^1})_{j \geq 1}$ is i.i.d. and independent of the first-generation offspring process $N_{A_{i0}}$. Note that the sequence of times in the cluster representation $G_{A_{i0}}$, hereby denoted as $(\tau_{ij})$, is the sequence of times of the first-generation offspring events. The function $h(\cdot)$ is referred to as the fertility function and controls both the displacement and the expected number of offspring(s) of a specific event. Hence, by definition, the number of first generation offspring events is Poisson and depends on the mark of the event acting as an immigrant to the stream of points considered. Note that the above representation also emphasises the independence between the subclusters considered at any point, from the immigrant perspective. There is a connection with Galton-Watson theory that was historically used to show that the Hawkes process is a general Poisson cluster process (see \cite{ho74}); we define it as part of this family, but the Hawkes process is classically introduced from the self-excitation perspective, that is, from the specification of the function $h(\cdot)$ (see e.g. \cite{hawkes71}). 

We propose in Section~6 and Section~7 tail asymptotics for the generic versions of Equation~\eqref{eq:di} and Equation~\eqref{eq:hi}, which satisfy, in the settings of the Hawkes process, fixed-point distributional equations of the form:
\begin{enumerate}
    \item for the maximum, \begin{equation}\label{eq:h-h} H^{H} \eqdist X \vee \bigvee_{j=1}^{L_A} H_j^{H} ;\end{equation}
    \item for the sum, \begin{equation}\label{eq:h-d} D^{H} \eqdist X + \sum_{j=1}^{L_A} D_j^{H}; \end{equation}
\end{enumerate}
where $L_A|A \sim \text{Poisson}(\kappa_A)$ and $\kappa_A = \int_{(0, \infty)} h(t,A) \diff t$ and where $(H_j^{H})$ and $(D_j^{H})$ are i.i.d. copies of $H^{H}$ and $D^{H}$, respectively. In this work, we always assume the subcriticality condition (in the terminology of branching processes) $\E{\kappa_A} < 1$, in order for clusters to be almost surely finite. This also implies that the expected total number of points in a cluster is given by $\frac{1}{1-\E{\kappa_A}}$, using a geometric series argument (see Chapter 12 in \cite{bremaud20}). As pointed out in \cite{af18} and references therein, the combination of the subcriticality assumption, the fact that the random quantities involved in Equation~\eqref{eq:h-d} are nonnegative and the assumption that $\E{X} < \infty$ (to be made through the index of regular variation of $X$ in further sections) yields the existence and uniqueness of a nonnegative solution to this distributional equation; for Equation~\eqref{eq:h-h}, a discussion about the existence of potentially multiple solutions to the higher-order Lindley equation can be found in \cite{bcop22}. Lastly, note that Equation~\eqref{eq:h-h} and Equation~\eqref{eq:h-d} emphasise the cascade structure of the Hawkes process. 

\section{A word on regular variation}

Throughout this paper, we will assume that the governing random components of our processes of interest are regularly varying, that is, roughly speaking, exhibit heavy tails. More specifically, we will assume that the random vector $\mathbf{X}$ is regularly varying. For the renewal Poisson cluster process, this amounts to assume that $\mathbf{X} = (X, K_A)$ is regularly varying, where $X$ and $K_A$ are defined as in Section 2.2; for the Hawkes process, this amounts to assume that $\mathbf{X} = (X, \kappa_A)$ is regularly varying, where $X$ and $\kappa_A$ are defined in Section 2.3. The exact definition of regular variation  varies in the literature depending on the context (see e.g. \cite{resnick}, \cite{resnick2}, \cite{dhf06}, \cite{hl06}, \cite{szm16}). Hence, we first recall the definition of regular variation we use in this text in full generality, borrowing notations from \cite{mw23}. We let $\R^d_{\bZ}=\R^d \backslash \{\bZ\}$ with $\bZ = (0,0,\ldots, 0)$. We let $\lvert \cdot \rvert$ be any norm on $\R^d$ (by their equivalence). Note that, in subsequent sections, our framework is restricted to the case where $d=2$. 

\defn{ Let $\bX$ be a random vector with values in $\R^d$. Suppose that $\lvert \bX \rvert$ is regularly varying with index $\alpha >0$. Let $(a_n)$ be a real sequence satisfying $n \p{\lvert \bX \rvert > a_n} \rightarrow 1$, as $n \rightarrow \infty$. The random vector $\bX$ (and its distribution) are said to be regularly varying if there exists a non-null Radon measure $\mu$ on the Borel $\sigma$-field of $\R^d_{\bZ}$ such that, for every $\mu$-continuity set $A$, it holds that $$\mu_n(A) \defeq n \p{a_n^{-1} \bX \in A} \rightarrow \mu(A), \text{ as } n \rightarrow \infty .$$}\label{def:rv}
In the above definition, two remarks are in order: 

\begin{enumerate}
	\item the regular variation of $\lvert \bX \rvert$ is univariate; standard definition applies, namely that the distribution of $\lvert \bX \rvert$ has power-law tails, that is, $\p{\lvert \bX \rvert > x} = x^{-\alpha} L(x)$ for $x>0$, where $L(\cdot)$ is a slowly varying function;
	\item the kind of convergence that takes place is vague convergence. The limiting measure possesses various nice properties, among which one can cite homogeneity: for any Borel set $B \subset \R^d_{\bZ}$ and $t>0$, it holds that $\mu(tB)=t^{-\alpha}\mu(B)$. 
\end{enumerate}

Rather than using the sequential form as in Definition~\eqref{def:rv}, it is possible to use an alternative continuous form. Additionally, a distinguished characterisation in the literature is through a limiting decomposition into ``spectral" and `radial" parts, see \cite{resnick2}.  

\begin{prop}[Theorem 6.1 in \cite{resnick2}]\label{prop:rvdecomp}
	A random vector $\bX$ with values in $\R^d$ is regularly varying with index $\alpha >0$ and non-null Radon measure $\mu$ on $\R^d_{\bZ}$ if and only if one of the following relations holds:
	\begin{enumerate}
		\item (Continuous form): The random variable $\lvert \bX \rvert$ is regularly varying with index $\alpha>0$ and $$\frac{\p{x^{-1} \bX \in \cdot}}{\p{\lvert \bX \rvert > x}} \convv \mu(\cdot), \text{ as } x \rightarrow \infty. $$
		\item (Weak convergence to independent radial/spectral decomposition): the following limit holds $$\p{\bigg(\frac{\bX}{x}, \frac{\bX}{\lvert \bX \rvert}  \bigg) \in \cdot} \cvgwk \p{(Y, \bTheta) \in \cdot}, \text{ as } x \rightarrow \infty$$ where $Y \sim \text{Pareto}(\alpha)$ with $\alpha >0$ and is independent of $\bTheta$, which takes values on the unit sphere $\S^{d-1}$ defined by $\S^{d-1} = \{\mathbf{x} \in \R^d: \lvert \mathbf{x}  \rvert = 1 \}$.
	\end{enumerate}
\end{prop}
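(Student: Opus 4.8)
The plan is to prove the two biconditionals in turn: first that the sequential form in Definition~\eqref{def:rv} is equivalent to the continuous form (1), and then that (1) is equivalent to the polar decomposition (2). The first equivalence only amounts to trading the discrete normalising sequence $(a_n)$ for the continuous normaliser $x \mapsto \p{|\bX| > x}$; the second is a change to polar coordinates $T(\bx) = (|\bx|, \bx/|\bx|)$ combined with the homogeneity of $\mu$.

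For Definition~\eqref{def:rv} $\Rightarrow$ (1): evaluating $\mu_n$ at the set $\{|\bx| > 1\}$ and using $n\p{|\bX| > a_n} \to 1$ gives $\mu(\{|\bx| > 1\}) = 1$, that $x \mapsto \p{|\bX| > x}$ is regularly varying with index $-\alpha$, and hence, by Karamata, that $(a_n)$ is regularly varying with index $1/\alpha$, so in particular $a_{n+1}/a_n \to 1$. Then, for large $x$, I would take $n = n(x)$ with $a_n \le x < a_{n+1}$, write
\[
\frac{\p{x^{-1}\bX \in A}}{\p{|\bX| > x}} = \frac{n\,\p{a_n^{-1}\bX \in (x/a_n)A}}{n\,\p{|\bX| > x}},
\]
observe that the denominator tends to $1$ by monotonicity of $\p{|\bX| > \cdot}$ together with $n/(n+1) \to 1$, and handle the numerator by sandwiching $(x/a_n)A$ between inner and outer dilations of $A$ that are $\mu$-continuity sets, letting the dilation factor $x/a_n \to 1$ and using $\mu(\partial A) = 0$. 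The reverse implication (1) $\Rightarrow$ Definition~\eqref{def:rv} is immediate: specialise $x = a_n$ and multiply through by $n$, using $n\p{|\bX| > a_n} \to 1$.

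For (1) $\Leftrightarrow$ (2): the map $T : \R^d_{\bZ} \to (0,\infty) \times \S^{d-1}$ is a homeomorphism, and the homogeneity $\mu(tB) = t^{-\alpha}\mu(B)$ forces its pushforward to factorise, $T_*\mu = \nu_\alpha \otimes \sigma$, with $\nu_\alpha(\diff r) = \alpha r^{-\alpha-1}\diff r$ on $(0,\infty)$ and $\sigma$ a finite measure on $\S^{d-1}$ normalised by $\sigma(\S^{d-1}) = \mu(\{|\bx| > 1\}) = 1$. Transporting the vague convergence in (1) through $T$ and restricting to sets $(a,b] \times S$ with $a \ge 1$ and $S$ a $\sigma$-continuity set yields
\[
\Cprob{\bigl(|\bX|/x,\ \bX/|\bX|\bigr) \in (a,b]\times S}{|\bX| > x} \;\longrightarrow\; (a^{-\alpha} - b^{-\alpha})\,\sigma(S),
\]
which is exactly $\p{(Y,\bTheta) \in (a,b]\times S}$ for $Y \sim \text{Pareto}(\alpha)$ independent of $\bTheta \sim \sigma$. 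To promote convergence on this $\pi$-system to weak convergence of the conditional laws on $[1,\infty)\times\S^{d-1}$, I would verify tightness via $\Cprob{|\bX|/x > M}{|\bX|>x} = \p{|\bX| > Mx}/\p{|\bX|>x}$, which tends to $M^{-\alpha}$ and, by Potter's bounds, does so with enough uniformity in $x$ that the tail in $M$ is negligible. Conversely, under (2) the one-dimensional marginal $\Cprob{|\bX| > tx}{|\bX|>x} \to t^{-\alpha}$ (first for $t \ge 1$, then for all $t > 0$ by inverting the ratio) gives regular variation of $|\bX|$; and for $A$ bounded away from $\bZ$ one splits $A$ along $\{|\bx| = 1\}$, rescales the piece inside the unit ball by $\delta^{-1}$ with $\delta = \inf_{\bx\in A}|\bx|$ to bring it into the range where (2) applies, and recombines using the homogeneity of $\mu$, recovering (1).

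The step I expect to be the main obstacle is precisely the passage from the sequential to the continuous statement — controlling $\p{a_n^{-1}\bX \in (x/a_n)A}$ uniformly as $x/a_n \to 1$, which is where $\mu(\partial A) = 0$ and the dilation-continuity of $\mu$ are used — together with, on the (1) $\Rightarrow$ (2) side, the uniformity (Potter's bounds, i.e.\ the uniform convergence theorem for regularly varying functions) needed to rule out escape of mass to infinity when conditioning on $\{|\bX| > x\}$. Everything else is bookkeeping with continuity sets.
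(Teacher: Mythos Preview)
The paper does not prove this proposition at all: it is stated as Theorem~6.1 in \cite{resnick2} and simply quoted as background, with no argument given. So there is no ``paper's own proof'' to compare against.

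Your sketch is essentially the standard argument one finds in Resnick's book. The sequential--continuous equivalence via $a_n \le x < a_{n+1}$ and dilation-sandwiching of $\mu$-continuity sets is the usual route, as is the polar factorisation $T_*\mu = \nu_\alpha \otimes \sigma$ forced by homogeneity. One small remark: the statement of part~(2) in the paper appears to omit the conditioning on $\{|\bX| > x\}$ (as written, $\p{(\bX/x, \bX/|\bX|) \in \cdot}$ would converge to a point mass at zero in the first coordinate), and you have correctly supplied it in your argument. Beyond that, the proposal is a faithful outline of the textbook proof; there is nothing to correct.
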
 

In Proposition~\eqref{prop:rvdecomp}, the notation $\convv$ refers to vague convergence: we say that a sequence of measures $(\mu_n)$ (with $\mu_n \in M_{+}(E)$, the space of nonnegative Radon measure on $(E, \cE)$) converges vaguely to a measure $\mu \in M_{+}(E)$ if for all functions $f \in \C_{K}^{+}(E)$, we have $\int_{E} f(x) \mu_n(\diff x) \rightarrow \int_{E} f(x) \mu(\diff x)$, where $C_K^{+}(E)$ denotes the set of functions $f: E \rightarrow \R_+$ being continuous with compact support. For more details about vague convergence, see e.g. Chapter 3 in \cite{resnick2}. The notation $\cvgwk$ refers to the standard notion of weak convergence. The above characterisations have various consequences. The first property is a continuous mapping theorem, first proved in \cite{hl06} in the framework of metric spaces. We use a simplified version fitting our settings, which we partially reproduce, from \cite{mw23}. See also Proposition 4.3 and Corollary 4.2 in \cite{lrr14}. 

\begin{prop}[Theorem 2.2.30 in \cite{mw23}, Proposition 4.3 and Corollary 4.2 in \cite{lrr14}]\label{prop:cm}
	Let $\bX$ be a random vector in $\R^d$ and suppose it is regularly varying with index $\alpha>0$ and non-null Radon measure $\mu$ on $\R^d_{\bZ}$. Let $g(\cdot): \R^d \rightarrow \R$ be a non-zero, continuous and positively homogeneous map of order $\gamma$, i.e. for every $\bx \in \R^d$, $g(t\bx)=t^{\gamma}g(\bx)$ for some $\gamma>0$. Then, the following limit relation holds $$\frac{\p{x^{-1} g(\bX) \in \cdot}}{\p{\lvert \bX \rvert^{\gamma} > x}} \convv \mu(g^{-1}(\cdot)), \text{ as } x \rightarrow \infty.$$ Note that for every $\epsilon>0$, $\mu(g^{-1}(\{x \in \R: \lvert x \rvert > \epsilon\})) < \infty. $ Moreover, if $\mu(g^{-1}(\cdot))$ is not the null measure on $\R_0$, then $g(\bX)$ is regularly varying with index $\alpha/\gamma$ and with non-null Radon measure $$\frac{\mu(g^{-1}(\cdot))}{\mu(g^{-1}(\{x \in \R: \lvert x \rvert > 1\}))}.$$  
\end{prop}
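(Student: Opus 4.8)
The plan is to obtain everything from the continuous form of regular variation for $\bX$ (Proposition~\eqref{prop:rvdecomp}(1)) combined with the homogeneity of $g$, and then to invoke a continuous mapping theorem for vague convergence of Radon measures on punctured spaces. First I would fix a Borel set $B\subseteq\R_0$; since $g(\bZ)=0\notin B$ one has $g^{-1}(B)\subseteq\R^d_{\bZ}$. By positive homogeneity of order $\gamma$, $\{x^{-1}g(\bX)\in B\}=\{g(x^{-1/\gamma}\bX)\in B\}=\{x^{-1/\gamma}\bX\in g^{-1}(B)\}$, and, writing $y\defeq x^{1/\gamma}$, $\{\lvert\bX\rvert^{\gamma}>x\}=\{\lvert\bX\rvert>y\}$, so that
$$\frac{\p{x^{-1}g(\bX)\in B}}{\p{\lvert\bX\rvert^{\gamma}>x}}=\frac{\p{y^{-1}\bX\in g^{-1}(B)}}{\p{\lvert\bX\rvert>y}}.$$
Thus it suffices to let $y\to\infty$ on the right-hand side and identify the limit as $\mu(g^{-1}(B))$ for the relevant sets, which is exactly Proposition~\eqref{prop:rvdecomp}(1) applied to $\bX$ at the set $g^{-1}(B)$.

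Next I would establish the bounded-away property that makes $g^{-1}(B)$ admissible. As $\S^{d-1}$ is compact and $g$ is continuous, $M\defeq\sup_{\lvert\bx\rvert=1}\lvert g(\bx)\rvert$ is finite, and it is positive because $g$ is non-zero and homogeneous. Then $\lvert g(\bx)\rvert=\lvert\bx\rvert^{\gamma}\,\lvert g(\bx/\lvert\bx\rvert)\rvert\leq M\lvert\bx\rvert^{\gamma}$ for every $\bx\neq\bZ$, so $g^{-1}(\{x:\lvert x\rvert>\epsilon\})\subseteq\{\bx:\lvert\bx\rvert>(\epsilon/M)^{1/\gamma}\}$. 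Since $\mu(\{\lvert\bx\rvert>r\})=r^{-\alpha}$ — a standard consequence of Definition~\eqref{def:rv} and the homogeneity of $\mu$ — this gives at once $\mu(g^{-1}(\{\lvert x\rvert>\epsilon\}))\leq(\epsilon/M)^{-\alpha/\gamma}<\infty$, the asserted finiteness, and shows that $\mu(g^{-1}(\cdot))$ is a Radon measure on $\R_0$ assigning finite mass to sets bounded away from $0$.

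With this in hand, $g$ is a continuous map whose discontinuity set is empty (hence $\mu$-null) and which pulls back neighbourhoods of the pole $0$ of $\R_0$ to neighbourhoods of the pole $\bZ$ of $\R^d_{\bZ}$, so the continuous mapping theorem for vague convergence on punctured spaces (Proposition~4.3 and Corollary~4.2 in \cite{lrr14}, Theorem~2.2.30 in \cite{mw23}) yields $\p{y^{-1}\bX\in g^{-1}(\cdot)}/\p{\lvert\bX\rvert>y}\convv\mu(g^{-1}(\cdot))$ on $\R_0$; combined with the first display this is the claimed limit relation. For the regular-variation statement, I would observe that $\mu\circ g^{-1}$ is homogeneous of index $\alpha/\gamma$, since $g^{-1}(tB)=t^{1/\gamma}g^{-1}(B)$ by homogeneity of $g$, whence $\mu(g^{-1}(tB))=t^{-\alpha/\gamma}\mu(g^{-1}(B))$. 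Consequently, if $\mu\circ g^{-1}$ is not the null measure on $\R_0$ it is strictly positive on each $\{\lvert x\rvert>\epsilon\}$ and puts no mass on the spheres $\{\lvert x\rvert=\epsilon\}$ (otherwise an uncountable family of disjoint sets of positive mass would sit inside a set of finite $\mu\circ g^{-1}$-measure), so $\{\lvert x\rvert>1\}$ is a continuity set; taking $B=\{\lvert x\rvert>1\}$ and recalling that $\p{\lvert\bX\rvert^{\gamma}>x}=\p{\lvert\bX\rvert>x^{1/\gamma}}$ is regularly varying of index $\alpha/\gamma$, we get $\p{g(\bX)>x}\sim\mu(g^{-1}(\{\lvert x\rvert>1\}))\,\p{\lvert\bX\rvert^{\gamma}>x}$, whence $g(\bX)$ is regularly varying of index $\alpha/\gamma$ with normalised limit measure $\mu(g^{-1}(\cdot))/\mu(g^{-1}(\{\lvert x\rvert>1\}))$.

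The step I expect to be the main obstacle is the invocation of the continuous mapping theorem: $g$ need not be proper as a map $\R^d_{\bZ}\to\R_0$ (it may collapse a whole cone onto $0$), so one must be careful to use a version of the theorem that only requires the $\mu$-negligibility of the discontinuity set and the bounded-away condition verified above, and not properness. Once the homogeneity reduction of the first step is in place, the remaining arguments are routine.
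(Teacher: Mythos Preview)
The paper does not supply its own proof of this proposition: it is stated with attribution to Theorem~2.2.30 in \cite{mw23} and Proposition~4.3 / Corollary~4.2 in \cite{lrr14}, and the text proceeds directly to Example~\eqref{ex:proj}. There is therefore nothing in the paper to compare your argument against.

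That said, your proof is essentially the standard one and is correct. The key homogeneity reduction $\{x^{-1}g(\bX)\in B\}=\{x^{-1/\gamma}\bX\in g^{-1}(B)\}$ together with the change of normalisation $\p{\lvert\bX\rvert^{\gamma}>x}=\p{\lvert\bX\rvert>x^{1/\gamma}}$ is exactly how these results are proved in the cited references, and your verification that $g^{-1}(\{\lvert x\rvert>\epsilon\})$ is bounded away from $\bZ$ via $M=\sup_{\lvert\bx\rvert=1}\lvert g(\bx)\rvert$ is the right way to justify that the pushforward $\mu\circ g^{-1}$ is Radon on $\R_0$ and that the continuous mapping theorem for $M_0$-convergence applies. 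Your caveat about properness is well placed: the version in \cite{lrr14} indeed only requires the bounded-away condition you checked, not properness. One small slip: in the final display you write $\p{g(\bX)>x}$ where the statement concerns $\lvert g(\bX)\rvert$, so the conclusion should read $\p{\lvert g(\bX)\rvert>x}\sim\mu(g^{-1}(\{\lvert x\rvert>1\}))\,\p{\lvert\bX\rvert^{\gamma}>x}$.
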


\begin{ex}\label{ex:proj}
	It is easily seen that the map defined by the projection on any coordinate of $\bX$ is a continuous mapping satisfying the assumptions of Proposition~\eqref{prop:cm} with $\gamma=1$. If $d=2$, $\bX=(X_1, X_2)$ and $g(\bX) \defeq X_1$, then by the homogeneity property of the limiting Radon measure $\mu$, as long as $$\mu(\{(x_1, x_2) \in \R^2_{\bZ}: x_1 > 1 \}) > 0$$ one obtains regular variation of $X_1$ with index $\alpha > 0$. 
\end{ex}

A second useful result, due to \cite{szm16} again in the setting of metric spaces that we simplify here, shows that one can actually replace the norm $\lvert \cdot \rvert$ by any modulus. A modulus, as defined in Definition 2.2 of \cite{szm16}, is a function $\rho: \R^d \rightarrow [0, \infty)$ such that $\rho(\cdot)$ is non-zero, continuous and positively homogeneous of order 1. Proposition 3.1 in \cite{szm16} then ensures the following. 

\begin{prop}[Proposition 3.1 in \cite{szm16}]\label{prop:mod}
	A random vector $\bX$ with values in $\R^d$ is regularly varying with index $\alpha>0$ and non-null Radon measure $\mu$ on $\R^d_{\bZ}$ if and only if there exists a modulus $\rho$ such that $\rho(\bX)$ is regularly varying with index $\alpha > 0$, and a random vector $\bTheta$ taking values on $\S^{d-1} \defeq \{\bx \in \R^d: \rho(\bx) = 1\}$ such that $$\Cprob{\frac{\bX}{\rho(\bX)} \in \cdot}{\rho(\bX) > x} \cvgwk \p{\bTheta \in \cdot}, \text{ as } x \rightarrow \infty.$$  
\end{prop}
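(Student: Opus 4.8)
\emph{Proof sketch.} The plan is to argue the two implications separately; the forward one is essentially read off from Proposition~\eqref{prop:rvdecomp}, while the converse carries the content.

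For the ``only if'' direction I would take $\rho$ to be the Euclidean norm $|\cdot|$, which is non-zero, continuous and positively homogeneous of order $1$ and for which $\S^{d-1}_{\rho}=\S^{d-1}$. Then Proposition~\eqref{prop:rvdecomp}(1) already gives that $|\bX|$ is regularly varying with index $\alpha$, while Proposition~\eqref{prop:rvdecomp}(2), by the independence of $Y$ and $\bTheta$, yields in particular $\Cprob{\bX/|\bX|\in\cdot}{|\bX|>x}\cvgwk\p{\bTheta\in\cdot}$. (Feeding $g=\rho$ into Proposition~\eqref{prop:cm} shows the same for any modulus $\rho$ with $\mu(\{\rho(\bx)>1\})>0$, not merely for $|\cdot|$.)

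For the ``if'' direction I am handed a modulus $\rho$, the regular variation of $\rho(\bX)$ with index $\alpha$, and $\Cprob{\bX/\rho(\bX)\in\cdot}{\rho(\bX)>x}\cvgwk\p{\bTheta\in\cdot}$ with $\bTheta$ valued in $\S^{d-1}_{\rho}=\{\bx:\rho(\bx)=1\}$; write $\sigma$ for the law of $\bTheta$. The plan is to build the candidate limit measure $\mu$ on $\R^d_{\bZ}$ as the push-forward of $\alpha\,r^{-\alpha-1}\diff r\otimes\sigma(\diff\mathbf{s})$ on $(0,\infty)\times\S^{d-1}_{\rho}$ under $(r,\mathbf{s})\mapsto r\mathbf{s}$ (the polar map $\bx\mapsto(\rho(\bx),\bx/\rho(\bx))$ is well defined wherever $\rho>0$, in particular on the conditioning event $\{\rho(\bX)>x\}$), and then to establish the continuous-form relation $\p{x^{-1}\bX\in\cdot}/\p{\rho(\bX)>x}\convv\mu$. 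Since a vague limit is pinned down by its values on a generating $\pi$-system of $\mu$-continuity sets bounded away from $\bZ$, I would test against the ``$\rho$-cylinders'' $A_{t,S}=\{\bx:\rho(\bx)>t,\ \bx/\rho(\bx)\in S\}$ for $t>0$ and $S$ a $\sigma$-continuity subset of $\S^{d-1}_{\rho}$; for these, the factorisation
$$\frac{\p{\rho(\bX)>xt,\ \bX/\rho(\bX)\in S}}{\p{\rho(\bX)>x}}=\frac{\p{\rho(\bX)>xt}}{\p{\rho(\bX)>x}}\cdot\Cprob{\bX/\rho(\bX)\in S}{\rho(\bX)>xt}$$
converges, by regular variation of $\rho(\bX)$ and by the hypothesis applied along $xt\to\infty$, to $t^{-\alpha}\sigma(S)=\mu(A_{t,S})$. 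Promoting this to vague convergence, and then replacing the norming $\p{\rho(\bX)>x}$ by $\p{|\bX|>x}$ — legitimate because $\{|\bx|=1\}$ is $\mu$-null and $0<\mu(\{|\bx|>1\})<\infty$, whence $\p{|\bX|>x}\sim\mu(\{|\bx|>1\})\,\p{\rho(\bX)>x}$ — places us in the hypothesis of Proposition~\eqref{prop:rvdecomp}(1) and concludes that $\bX$ is regularly varying with index $\alpha$ and limit measure $\mu/\mu(\{|\bx|>1\})$.

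The delicate point, which I expect to be the main obstacle, is the reconciliation of the $\rho$-geometry with the $|\cdot|$-geometry underlying Definition~\eqref{def:rv}: one needs the two-sided bound $c_1|\bx|\leq\rho(\bx)\leq c_2|\bx|$ available for a modulus (so that $\S^{d-1}_{\rho}$ is compact), which is what makes the cylinders $A_{t,S}$ with $t>0$ bounded away from $\bZ$, forces $\int_{\S^{d-1}_{\rho}}|\mathbf{s}|^{\alpha}\,\sigma(\diff\mathbf{s})<\infty$, controls the mass near infinity in the norming switch above, and guarantees that the candidate $\mu$ really is a non-null Radon measure on $\R^d_{\bZ}$. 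With this comparison in hand, the remaining items — that the $A_{t,S}$ form a convergence-determining $\pi$-system of $\mu$-continuity sets, and that a portmanteau theorem upgrades set-wise convergence to vague convergence — are routine.
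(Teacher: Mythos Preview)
The paper does not give its own proof of this proposition; it is quoted verbatim as Proposition~3.1 of \cite{szm16} and used as a black box, so there is no in-paper argument to compare against.

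Your sketch is the standard polar-decomposition argument and is sound in outline. One point to watch: you assert that a modulus automatically satisfies a two-sided bound $c_1|\bx|\le\rho(\bx)\le c_2|\bx|$. The upper bound always holds (continuity on the compact $|\cdot|$-unit sphere plus homogeneity), and that alone already guarantees that your cylinders $A_{t,S}$ are bounded away from $\bZ$. The lower bound, however, requires $\rho$ to vanish only at the origin; the paper's definition says merely ``non-zero'', and in the source \cite{szm16} (set in general star-shaped metric spaces) the modulus is allowed to vanish along rays, so $\S^{d-1}_\rho$ need not be compact and the finiteness of $\int_{\S^{d-1}_\rho}|\mathbf{s}|^{\alpha}\,\sigma(\diff\mathbf{s})$ is not automatic. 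In that generality the compactness shortcut is replaced by the tightness already contained in the assumed weak convergence of the conditional law together with an $M_0$-convergence framework. For the concrete moduli actually used later in this paper---linear combinations with strictly positive coefficients on the nonnegative orthant---your two-sided bound does hold and the sketch goes through as written.
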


Finally, in subsequent sections, we shall also use an other characterisation via the regular variation of linear combinations, proven by \cite{bdm02}. We denote the inner product in $\R^d$ by $\langle \cdot, \cdot \rangle$. 

\begin{prop}[Proposition 1.1 in \cite{bdm02}]\label{prop:lincomb}
	A random vector $\bX$ with values in $\R^d$ is regularly varying with noninteger index $\alpha>0$ if and only if there exists a slowly varying function $L(\cdot)$ such that, for all  $\bt \in \R^d$, $$\lim_{x \rightarrow \infty} \frac{\p{\langle \bt, \bX \rangle > x}}{x^{-\alpha} L(x)} = w(\bt) \text{ exists, }$$ for some function $w(\cdot)$ and there exists one $\bt_0 \neq 0$ such that $w(\bt_0) > 0.$ 
\end{prop}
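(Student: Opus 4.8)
\emph{Sketch of proof.} I would prove the two implications separately. The direct one is immediate from the continuous mapping theorem for regularly varying vectors, and the converse --- the substantive half --- combines a vague-compactness argument with a uniqueness (``Cram\'er--Wold'') statement for tail measures for which the noninteger assumption on $\alpha$ is exactly what is needed.

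\emph{Regular variation of $\bX$ implies the linear-combination property.} Fix $\bt \in \R^d$. The map $g_{\bt}(\bx) = \langle\bt,\bx\rangle$ is continuous and positively homogeneous of order $1$, so Proposition~\eqref{prop:cm} with $\gamma = 1$ gives $\p{x^{-1}\langle\bt,\bX\rangle \in \cdot} / \p{\lvert\bX\rvert > x} \convv \mu(g_{\bt}^{-1}(\cdot))$ on $\R\setminus\{0\}$, where $\mu$ is the limit measure of $\bX$. By homogeneity $\mu$ charges no hyperplane through the origin --- otherwise uncountably many pairwise disjoint dilates of such a hyperplane would each carry positive mass inside the finite-measure set $\{\lvert\bx\rvert > \delta\}$ --- so $(1,\infty)$ is a continuity set of $\mu(g_{\bt}^{-1}(\cdot))$, and with $L(x) \defeq x^{\alpha}\p{\lvert\bX\rvert > x}$ (slowly varying, since $\lvert\bX\rvert$ is regularly varying) we obtain $\p{\langle\bt,\bX\rangle > x} / (x^{-\alpha}L(x)) \to \mu(\{\bx : \langle\bt,\bx\rangle > 1\}) \eqdef w(\bt)$ for every $\bt$. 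Finally $\mathrm{supp}\,\mu$ is a nonempty cone, so for $\bt_0 \in \mathrm{supp}\,\mu\setminus\{\bZ\}$ the open half-space $\{\langle\bt_0,\bx\rangle > 0\}$ contains $\bt_0$ and hence has positive $\mu$-mass; rescaling yields $w(\bt_0) > 0$. No assumption on $\alpha$ beyond positivity enters this direction.

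\emph{The linear-combination property implies regular variation of $\bX$.} Taking $\bt = \pm\mathbf{e}_i$ shows that each $X_i$ and $-X_i$ is regularly varying of index $\alpha$ with the same slowly varying $L$, whence $\p{\lvert\bX\rvert > x} = \cO(x^{-\alpha}L(x))$, while $\bt = \bt_0$ together with the Cauchy--Schwarz inequality gives $\p{\lvert\bX\rvert > x} \geq c_0\,x^{-\alpha}L(x)$ for all large $x$. Consequently the normalised measures $\nu_x(\cdot) \defeq \p{x^{-1}\bX\in\cdot} / (x^{-\alpha}L(x))$ are uniformly bounded on every $\{\lvert\bx\rvert > \delta\}$, hence vaguely relatively compact on $\R^d_{\bZ}$, and the hypothesis gives the genuine limit $\nu_x(\{\langle\bt,\bx\rangle > 1\}) \to w(\bt)$ for all $\bt$. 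Along any vague subsequential limit $\nu_{x_n} \convv \nu$ one shows --- a step intertwined with establishing that $\lvert\bX\rvert$ itself is regularly varying --- that $\nu$ is homogeneous of order $-\alpha$, hence (see \cite{resnick2}) admits a polar decomposition $\nu = c_\alpha\, r^{-\alpha-1}\diff r \otimes \sigma(\diff\theta)$ with $\sigma$ a finite measure on $\S^{d-1}$, and that $\nu(\{\langle\bt,\bx\rangle > 1\}) = w(\bt)$ for all $\bt$. A direct computation in polar coordinates then gives $w(\bt) = (c_\alpha/\alpha)\int_{\S^{d-1}}\langle\bt,\theta\rangle_+^{\alpha}\,\sigma(\diff\theta)$.

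\emph{Uniqueness of the limit --- the main obstacle.} It remains to show that the family $\{w(\bt)\}_{\bt\in\R^d}$ determines $\nu$: then all subsequential limits coincide, $\nu_x \convv \nu$, and $\bX$ is regularly varying with index $\alpha$ and non-null limit measure $\nu$ (non-null because $w(\bt_0) > 0$ forces $\sigma \neq 0$). By the identity just displayed this reduces to the injectivity of $\sigma \mapsto \bigl(\bt \mapsto \int_{\S^{d-1}}\langle\bt,\theta\rangle_+^{\alpha}\,\sigma(\diff\theta)\bigr)$ on finite signed measures on $\S^{d-1}$. This injectivity fails precisely when $\alpha$ is a nonnegative integer $n$: there $\langle\bt,\theta\rangle_+^{n} + (-1)^{n}\langle-\bt,\theta\rangle_+^{n} = \langle\bt,\theta\rangle^{n}$ is a polynomial in $\bt$, so the combination $w(\bt) + (-1)^{n}w(-\bt)$ records only finitely many moments of $\sigma$ and cannot recover it; for noninteger $\alpha$ no such degeneracy occurs, all eigenvalues of the associated operator on $L^2(\S^{d-1})$ (a fractional spherical ``cosine-type'' transform) are nonzero, and $\sigma$ --- hence $\nu$ --- is uniquely determined. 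Carrying out this injectivity argument carefully, and thereby seeing exactly why the noninteger hypothesis cannot be relaxed, is the part I expect to require real work; the vague compactness, the homogeneity of the limit, and the polar-coordinate bookkeeping are comparatively routine.
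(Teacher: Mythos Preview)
The paper does not prove this proposition at all: it is stated as Proposition~1.1 of \cite{bdm02} and used as a black box in Sections~5 and~7. There is therefore no ``paper's own proof'' to compare against; what you have written is a reconstruction of the original Basrak--Davis--Mikosch argument, and at the structural level it is faithful to that source --- continuous mapping for the easy direction, vague relative compactness plus injectivity of the $\alpha$-cosine transform on the sphere for the hard direction.

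Two points of your sketch are slightly off and would need repair in a full write-up. In the forward direction, the boundary you need to be $\mu$-null is the affine hyperplane $\{\langle\bt,\bx\rangle = 1\}$, which does \emph{not} pass through the origin; your sentence about ``hyperplanes through the origin'' and their dilates is therefore aimed at the wrong object (a hyperplane through the origin is its own dilate). The homogeneity argument you have in mind does work, but for the correct family $\{\langle\bt,\bx\rangle = c\}_{c>0}$: homogeneity gives $\mu(\{\langle\bt,\bx\rangle = c\}) = c^{-\alpha}\mu(\{\langle\bt,\bx\rangle = 1\})$, and if the latter were positive one would have uncountably many disjoint sets of mass bounded below inside a set of finite $\mu$-measure. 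In the converse direction, ``$w(\pm\mathbf{e}_i)$ exists'' does not imply that $X_i$ is regularly varying when $w(\mathbf{e}_i)=0$; what you actually obtain is $\p{\pm X_i > x} = \cO(x^{-\alpha}L(x))$ (possibly $o$), which is still enough for the vague-compactness bound you want, so the conclusion survives even if the stated reason does not. The remaining steps --- homogeneity of subsequential limits, polar decomposition, and the identification of the obstruction at integer $\alpha$ via the degeneracy of $\langle\bt,\theta\rangle_+^{n} + (-1)^n\langle-\bt,\theta\rangle_+^{n}$ --- are correctly identified.
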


The above result states that a random vector $\bX$ is regularly varying with index $\alpha>0$ if and only if all linear combinations of its components are regularly varying with the same index $\alpha>0$. Note that it is not necessary for $\alpha$ in Proposition~\eqref{prop:lincomb} to be noninteger for the above equivalence to hold; however, when this is not the case, there are some caveats that we avoid considering in our the results of upcoming sections (e.g., with $\alpha$ noninteger, we do not have to consider $t \in \R^{d}$ but rather $t \in \R^{d}_+$), see \cite{bdm02}. 

Finally, the last result of great importance in showing the transfer of regular variation in the subsequent sections is Karamata's Theorem, which can be found as Theorem 8.1.6 in \cite{bgt89}. Let $X$ be a random variable, denote its associated Laplace-Stieltjes transform by $\varphi_X(s) \defeq \E{e^{-sX}}$ for $s > 0$, and its $n$-th derivative by $\varphi_X^{(n)}(s) = \E{(-X)^n e^{-sX}}$. Let $\Gamma(\cdot)$ define the Gamma function. 

\begin{thm}[Karamata's Tauberian Theorem, Theorem 8.1.6 in \cite{bgt89}]\label{thm:kar}
	The following statements are equivalent: 
	\begin{enumerate}
		\item $X$ is regularly varying with noninteger index $\alpha>0$ and slowly varying function $L_X(\cdot)$, i.e. $$\p{X > x} \sim  x^{-\alpha}L_X(x), \text{ as } x \rightarrow \infty.$$
		\item For a noninteger index $\alpha > 0$,  $$\varphi_X^{(\lceil \alpha \rceil)}(s) \sim C_{\alpha} s^{\alpha-\lceil \alpha \rceil}L_X(1/s), \text{ as } s \rightarrow 0^+,$$ for $L_X(\cdot)$ a slowly varying function, where $C_{\alpha}:=-\Gamma(\alpha+1) \Gamma(1-\alpha)/\Gamma(\alpha - \lfloor \alpha \rfloor).$
	\end{enumerate}
\end{thm}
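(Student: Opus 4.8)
The plan is to recognise $\varphi_X^{(\lceil\alpha\rceil)}$ as, up to a sign, the Laplace--Stieltjes transform of a \emph{positive} measure, and then to chain two classical pieces of one-dimensional Karamata theory: the integration-by-parts form of Karamata's theorem on integrals of regularly varying functions, and the Abelian--Tauberian theorem for Laplace--Stieltjes transforms (Theorem~1.7.1 in~\cite{bgt89}). Write $\bar F(x)=\mathbb{P}(X>x)$, put $n\defeq\lceil\alpha\rceil$, and set $M_n(x)\defeq\int_{[0,x]}y^n\,F(\diff y)$, a nondecreasing right-continuous function with $M_n(0^-)=0$. Since $\alpha$ is noninteger we have $n>\alpha$, so $M_n(\infty)=\mathbb{E}[X^n]=+\infty$; and differentiating under the integral sign,
$$(-1)^n\varphi_X^{(n)}(s)=\int_{[0,\infty)}y^n e^{-sy}\,F(\diff y)=\int_{[0,\infty)}e^{-sy}\,M_n(\diff y)=\widehat{M_n}(s).$$
The statement then factors through the single intermediate equivalence
$$\bar F(x)\sim x^{-\alpha}L_X(x)\ \Longleftrightarrow\ M_n(x)\sim\frac{\alpha}{n-\alpha}\,x^{n-\alpha}L_X(x),\qquad x\to\infty,$$
composed with the Abelian--Tauberian transform of its right-hand side.

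For the intermediate equivalence, both directions are an integration by parts followed by Karamata's theorem. In the direction $\Rightarrow$, $M_n(x)=-x^n\bar F(x)+n\int_0^x y^{n-1}\bar F(y)\,\diff y$; since $y\mapsto y^{n-1}\bar F(y)$ is regularly varying of index $n-1-\alpha>-1$, Karamata's theorem gives $\int_0^x y^{n-1}\bar F(y)\,\diff y\sim (n-\alpha)^{-1}x^{n-\alpha}L_X(x)$, and adding $-x^n\bar F(x)\sim -x^{n-\alpha}L_X(x)$ yields $M_n(x)\sim\frac{\alpha}{n-\alpha}x^{n-\alpha}L_X(x)$. In the direction $\Leftarrow$, use $\bar F(x)=\int_{(x,\infty)}y^{-n}\,M_n(\diff y)$ (valid since $M_n(\diff y)=y^nF(\diff y)$ on $(0,\infty)$); integration by parts gives $\bar F(x)=-x^{-n}M_n(x)+n\int_x^\infty y^{-n-1}M_n(y)\,\diff y$, and since $y\mapsto y^{-n-1}M_n(y)$ is regularly varying of index $-\alpha-1<-1$, Karamata's theorem for tail integrals gives $\int_x^\infty y^{-n-1}M_n(y)\,\diff y\sim(n-\alpha)^{-1}x^{-\alpha}L_X(x)$, whence $\bar F(x)\sim x^{-\alpha}L_X(x)$. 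All boundary terms vanish because $x^{-\alpha}L_X(x)\to0$.

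Transferring the asymptotics between $M_n$ and $\widehat{M_n}$ is exactly the Abelian--Tauberian theorem for Laplace--Stieltjes transforms: for nondecreasing $U$ with $U(0^-)=0$ and $\rho\geq 0$, one has $U(x)\sim x^{\rho}\ell(x)/\Gamma(1+\rho)$ as $x\to\infty$ iff $\widehat U(s)\sim s^{-\rho}\ell(1/s)$ as $s\to 0^+$, with $\ell$ slowly varying. Apply it with $U=M_n$, $\rho=n-\alpha\in(0,1)$, after writing $\frac{\alpha}{n-\alpha}x^{n-\alpha}L_X(x)=x^{\rho}\ell(x)/\Gamma(1+\rho)$ with $\ell(x)=\alpha\,\Gamma(n-\alpha)L_X(x)$ (using $\Gamma(1+\rho)=\rho\Gamma(\rho)$): this gives $(-1)^n\varphi_X^{(n)}(s)\sim\alpha\,\Gamma(n-\alpha)\,s^{\alpha-n}L_X(1/s)$, i.e.
$$\varphi_X^{(\lceil\alpha\rceil)}(s)\sim(-1)^{\lceil\alpha\rceil}\alpha\,\Gamma(\lceil\alpha\rceil-\alpha)\,s^{\alpha-\lceil\alpha\rceil}L_X(1/s),\qquad s\to 0^+.$$
Finally one checks $(-1)^{\lceil\alpha\rceil}\alpha\,\Gamma(\lceil\alpha\rceil-\alpha)=C_\alpha$: with $\lceil\alpha\rceil=\lfloor\alpha\rfloor+1$ and $\lceil\alpha\rceil-\alpha=1-(\alpha-\lfloor\alpha\rfloor)$, this follows from $\Gamma(\alpha+1)=\alpha\Gamma(\alpha)$, the reflection formula $\Gamma(z)\Gamma(1-z)=\pi/\sin(\pi z)$ at $z=\alpha$ and at $z=\alpha-\lfloor\alpha\rfloor$, and the identity $\sin(\pi\alpha)=(-1)^{\lfloor\alpha\rfloor}\sin\big(\pi(\alpha-\lfloor\alpha\rfloor)\big)$. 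The converse implication ($2\Rightarrow1$) runs the same chain backwards, the Tauberian step being legitimate because $M_n$ is monotone.

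The main obstacle is the Tauberian (converse) half, and it is what forces the derivative order $\lceil\alpha\rceil$: differentiation is not a monotone operation, so Karamata's theory cannot be applied to $\varphi_X$ itself, and recovering $x\to\infty$ behaviour from $s\to 0^+$ behaviour requires a side condition. Passing to the $\lceil\alpha\rceil$-th derivative makes the relevant object $(-1)^n\varphi_X^{(n)}=\widehat{M_n}$ the transform of the positive measure $y^nF(\diff y)$, so the Tauberian side condition --- monotonicity of $M_n$ --- holds automatically; and $\lceil\alpha\rceil$ is the minimal order for which $M_n(\infty)=\infty$, hence for a genuine power of $s$ to survive. Non-integrality of $\alpha$ enters exactly twice: it gives $n>\alpha$ strictly (so $\rho=n-\alpha>0$, avoiding the degenerate case $\rho=0$ with its logarithmic corrections) and it makes $C_\alpha$ finite through the reflection formula. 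One should also keep the sign convention $\varphi_X^{(n)}(s)=\mathbb{E}[(-X)^ne^{-sX}]$ of the statement consistent throughout.
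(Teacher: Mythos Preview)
The paper does not give its own proof of this statement: Theorem~\ref{thm:kar} is simply quoted from \cite{bgt89} and used as a black box throughout. There is therefore nothing in the paper to compare your argument against.

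That said, your proof is correct and is essentially the standard route one finds in \cite{bgt89} itself: reduce to the Laplace--Stieltjes transform of the monotone truncated-moment function $M_n(x)=\int_{[0,x]}y^n F(\diff y)$ with $n=\lceil\alpha\rceil$, use Karamata's integral theorem in both directions to pass between $\bar F$ and $M_n$, and then invoke the Hardy--Littlewood--Karamata Tauberian theorem (Theorem~1.7.1 in \cite{bgt89}) to pass between $M_n$ and $\widehat{M_n}=(-1)^n\varphi_X^{(n)}$. Your constant check via the reflection formula and $\sin(\pi\alpha)=(-1)^{\lfloor\alpha\rfloor}\sin(\pi(\alpha-\lfloor\alpha\rfloor))$ is also correct. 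The one point worth making explicit is that in the $\Leftarrow$ direction of the intermediate equivalence the boundary term $y^{-n}M_n(y)\to 0$ as $y\to\infty$ needs $M_n$ to be regularly varying of index $n-\alpha<n$, which you have; you state ``all boundary terms vanish because $x^{-\alpha}L_X(x)\to 0$'', but at that stage of the $\Leftarrow$ argument you do not yet know $\bar F$ is regularly varying, so the vanishing should be justified from the assumed asymptotics of $M_n$ instead (which you implicitly do).
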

\begin{rem}\label{rem:kar}
	Note that when $X$ is regularly varying with index $\alpha \in (n, n+1)$, the $(n+1)$-th moment does not exist. Observe that the above trivially implies that, when $\alpha \in (n, n+1)$, $\varphi_X^{(n+1)}(s) = \varphi_X^{(\lceil \alpha \rceil)}(s) \rightarrow \infty$, as $s \rightarrow 0^+$, a property we will use repeatedly in subsequent sections. 	
\end{rem}

\section{Tail asymptotics of maximum functional in renewal Poisson cluster process}

We now prove a single big-jump principle for the tail asymptotics of the distribution of the maximum functional of a generic cluster in the settings of the renewal Poisson cluster process. As mentioned in Remark~\eqref{rem:simple-pro}, the conclusions reached for this process are of course valid for the mixed binomial Poisson cluster process. 

\begin{prop}\label{prop:max-rp-trsf}
	Suppose the vector $\left(X, K_A\right)$ in Equation~\eqref{eq:rp-h} is regularly varying with index $\alpha > 1$ and non-null Radon measure $\mu$. Then, $$\p{H^{R}>x} \sim (1+\E{K_A})\p{X>x}, \text{ as } x \rightarrow \infty.$$
	Moreover, if $\mu(\{(x_1, x_2) \in \R^2_{+, \bZ}: x_1 > 1\}) > 0$, then $H^{R}$ is regularly varying with index $\alpha>1$.   
\end{prop}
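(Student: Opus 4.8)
The idea is to establish a single big-jump decomposition of $H^{R}$. Writing $M_{K}\defeq\bigvee_{j=1}^{K_{A}}X_{j}$ for the maximum of the transformed offspring marks, we have $\{H^{R}>x\}=\{X>x\}\cup\{M_{K}>x\}$, hence
\begin{equation*}
\p{H^{R}>x}=\p{X>x}+\p{M_{K}>x}-\p{X>x,\,M_{K}>x}.
\end{equation*}
The plan is to show that the middle term is asymptotic to $\E{K_{A}}\p{X>x}$ and that the overlap term is $o(\p{X>x})$; the stated equivalence then follows at once, and gives both the required upper and lower bounds simultaneously.

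For the middle term I would use that, in the renewal model of Section~2.2, the offspring marks $(X_{j})_{j\geq1}$ are i.i.d.\ copies of $X$ and independent of $(X,K_{A})$. Let $F$ denote their common distribution function and $G(z)\defeq\E{z^{K_{A}}}$ the probability generating function of $K_{A}$; conditioning on $K_{A}$ gives $\p{M_{K}\leq x}=\E{F(x)^{K_{A}}}=G(F(x))$. Since $\E{K_{A}}<\infty$ (assumed in Section~2.1, and in any case forced by regular variation of $(X,K_{A})$ with $\alpha>1$), Abel's theorem together with the mean value theorem yields $(1-G(s))/(1-s)\to G'(1^{-})=\E{K_{A}}$ as $s\uparrow1$, whence $\p{M_{K}>x}=1-G(F(x))\sim\E{K_{A}}\,(1-F(x))=\E{K_{A}}\p{X>x}$ because $F(x)\uparrow1$. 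Note that, since only the maximum is involved, this step needs neither a Kesten-type uniform bound nor an exponential moment of $K_{A}$: finiteness of the mean suffices.

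For the overlap term, conditioning on $(X,K_{A})$ and using the Bernoulli inequality $1-F(x)^{k}\leq k\,(1-F(x))$ gives $\p{X>x,\,M_{K}>x}=\E{\I{X>x}(1-F(x)^{K_{A}})}\leq\E{K_{A}\I{X>x}}\,\p{X>x}$. As $K_{A}\I{X>x}\to0$ almost surely and is dominated by $K_{A}\in L^{1}$, dominated convergence gives $\E{K_{A}\I{X>x}}\to0$, so the overlap is indeed $o(\p{X>x})$. The one delicate point is that $X$ and $K_{A}$ are dependent, so their joint tail could a priori contribute to the overlap; the dominated-convergence bound is exactly what rules this out, and I expect it to be the (mild) crux of the argument.

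For the ``moreover'' statement: the hypothesis $\mu(\{(x_{1},x_{2})\in\R^{2}_{+,\bZ}:x_{1}>1\})>0$ is precisely the condition of Example~\eqref{ex:proj} applied to the coordinate projection $\bX\mapsto X$ (continuous and positively homogeneous of order $1$), so $X$ is regularly varying with index $\alpha$, i.e.\ $\p{X>x}=x^{-\alpha}L_{X}(x)$ with $L_{X}$ slowly varying. The equivalence just proved then gives $\p{H^{R}>x}\sim(1+\E{K_{A}})\,x^{-\alpha}L_{X}(x)$; as $x\mapsto(1+\E{K_{A}})L_{X}(x)$ is again slowly varying, $H^{R}$ is regularly varying with the same index $\alpha>1$.
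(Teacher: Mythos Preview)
Your proof is correct. It differs from the paper's argument in structure, though both are elementary and rest on the same underlying ingredient, namely $\E{K_{A}}<\infty$ together with dominated convergence to absorb the dependence between $X$ and $K_{A}$.

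The paper conditions on $K_{A}=k$, writes $\p{H^{R}>x}=1-\sum_{k}\p{X\le x\mid K_{A}=k}\exp\big(k\log(1-\p{X>x})\big)\p{K_{A}=k}$, Taylor-expands the exponential as $x\to\infty$, and then splits the resulting expression into three pieces that are bounded one by one via dominated convergence. Your route is more direct: the inclusion--exclusion $\p{H^{R}>x}=\p{X>x}+\p{M_{K}>x}-\p{X>x,M_{K}>x}$ isolates the two contributions cleanly, the PGF identity $\p{M_{K}>x}=1-G(F(x))$ plus $(1-G(s))/(1-s)\to\E{K_{A}}$ handles the i.i.d.\ offspring maximum without any expansion, and the Bernoulli bound $1-F(x)^{k}\le k(1-F(x))$ reduces the overlap to $\p{X>x}\,\E{K_{A}\I{X>x}}\to0$. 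Both treatments of the ``moreover'' part are identical, via Example~\ref{ex:proj}. Your argument has the advantage of making transparent that the tail equivalence itself needs only $\E{K_{A}}<\infty$ and the independence of $(X_{j})_{j\ge1}$ from $(X,K_{A})$, with regular variation entering solely through the final sentence; this is consistent with the paper's own remark after the proposition.
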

\begin{proof}[Proof of Proposition~\eqref{prop:max-rp-trsf}]
    The proof can be found in Appendix~\eqref{app:1}. It uses a classical approach via conditioning on $K_A$ and Taylor expansions and is given for completeness. 
\end{proof}

\begin{rem}
    In the proof of Proposition~\eqref{prop:max-rp-trsf}, one only needs $X$ to be regularly varying for $H^H$ to be regularly varying. However, to keep the same settings in terms of regular variation as for the upcoming results, we make the assumption that $(X, K_A)$ is regularly varying and regular variation of $X$ follows by considering the consequences of this assumption contained in Example~\eqref{ex:proj}. The case where $\p{X>x}=o(\p{K_A>x})$, $x\to \infty$, $X$ regularly varying and $K_A$ a stopping time with respect to $(A_j)_{j\ge0}$ is treated in Proposition 3.1 and Corollary 4.2 of \cite{bmz22}. It is proved that $H^{R}$ is also regularly varying but, more generally, that $H^R$ falls in the same MDA than $X$. What we propose in Proposition~\eqref{prop:max-rp-trsf} is merely a refinement for the Fr\'{e}chet MDA, describing explicitly the tail of $H^{R}$ when $\p{K_A>x}=\cO(\p{X>x})$, $x\to \infty$, and $K_A$ depending only on $X_0$.
\end{rem}

\section{Tail asymptotics of the sum functional in renewal Poisson cluster process}

We now prove a result concerning the sum functional of a generic cluster in the settings of the renewal Poisson cluster process. Again, this extends easily to the mixed binomial Poisson cluster process. 

\begin{prop}\label{prop:sum-rp-trsf}
    	Suppose the vector $\left(X, K_A\right)$ in Equation~\eqref{eq:rp-d} is regularly varying with noninteger index $\alpha > 1$. Then, $D^{R}$ is regularly varying with the same index $\alpha$. More specifically, $$\p{D^{R}>x} \sim \p{X + \E{X} K_A > x} + \E{K_A}\p{X>x}, \text{ as } x \rightarrow \infty.$$
\end{prop}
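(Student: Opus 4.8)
The strategy is to condition on $K_A$ and exploit the Laplace-transform/Karamata machinery set up in Section~3, exactly in the spirit of the proof of Proposition~\eqref{prop:max-rp-trsf}. Write $n=\lfloor\alpha\rfloor$, so $\lceil\alpha\rceil=n+1$. Since $(X,K_A)$ is regularly varying with noninteger index $\alpha>1$, Proposition~\eqref{prop:lincomb} gives regular variation of every linear combination $\langle \bt,(X,K_A)\rangle$ with the same index; in particular $X$, $K_A$, and $X+\E{X}K_A$ are each regularly varying with index $\alpha$. By Remark~\eqref{rem:kar}, the relevant Laplace-Stieltjes transforms have $(n{+}1)$-st derivatives blowing up like $s^{\alpha-(n+1)}L(1/s)$ as $s\to 0^+$. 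The plan is to compute $\varphi_{D^R}^{(n+1)}(s)$, show it is asymptotically equivalent to $\varphi_{X+\E{X}K_A}^{(n+1)}(s)+\E{K_A}\,\varphi_X^{(n+1)}(s)$ as $s\to 0^+$, and then invoke Theorem~\eqref{thm:kar} (Karamata's Tauberian Theorem) in the reverse direction to translate this back to the claimed tail equivalence. The fact that the limiting constant $C_\alpha$ in Theorem~\eqref{thm:kar} is the same for all three random variables (it depends only on $\alpha$) is what makes the transfer of the asymptotic relation from transforms to tails legitimate.

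The core computation is the conditional Laplace transform. Conditionally on $(X,K_A)$, the offspring marks $X_1,\dots,X_{K_A}$ are i.i.d.\ with common transform $\varphi_X(s)$ and independent of $(X,K_A)$, so
\begin{equation*}
\varphi_{D^R}(s)=\E{e^{-sX}\varphi_X(s)^{K_A}}.
\end{equation*}
I would then differentiate this $n+1$ times in $s$. By the general Leibniz rule, $\varphi_{D^R}^{(n+1)}(s)$ is a sum over ways of distributing the $n+1$ derivatives among the factor $e^{-sX}$ and the factor $\varphi_X(s)^{K_A}$; and each derivative of $\varphi_X(s)^{K_A}$ produces, via the chain/Faà di Bruno rule, terms carrying falling factorials of $K_A$ and powers of $\varphi_X^{(j)}(s)$. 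As $s\to 0^+$ one has $\varphi_X(s)\to 1$, $\varphi_X'(s)\to -\E{X}$, and — crucially — $\varphi_X^{(j)}(s)\to (-1)^j\E{X^j}$ is a finite constant for every $j\le n$, while only $\varphi_X^{(n+1)}(s)\to\infty$. Hence, after multiplying through, every term in the expansion of $\varphi_{D^R}^{(n+1)}(s)$ is $o\!\big(\varphi_X^{(n+1)}(s)\big)$ except the two ``dominant'' contributions: (i) the term where all $n+1$ derivatives hit $\varphi_X(s)^{K_A}$ in the ``diagonal'' way that picks out a single factor $\varphi_X^{(n+1)}(s)$ times $K_A\varphi_X(s)^{K_A-1}$, giving asymptotically $\E{K_A}\varphi_X^{(n+1)}(s)$; and (ii) the collection of terms that reassemble into the transform of $X+\E{X}K_A$. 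For (ii) the point is that $\varphi_{X+\E{X}K_A}(s)=\E{e^{-sX}e^{-s\E{X}K_A}}$, and Taylor-expanding $\varphi_X(s)^{K_A}=\exp\big(K_A\log\varphi_X(s)\big)=\exp\big(-s\E{X}K_A+O(s^2 K_A^2)+\cdots\big)$ shows that, to the order needed, $e^{-sX}\varphi_X(s)^{K_A}$ and $e^{-sX}e^{-s\E{X}K_A}$ have $(n+1)$-st derivatives differing by $o\!\big(\varphi_X^{(n+1)}(s)\big)$; here one uses that $\E{K_A^{n}X}$, $\E{X^{n+1}}$ etc.\ (all moments strictly below order $\alpha$) are finite by regular variation of the linear combinations, so the lower-order remainder terms stay bounded.

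The main obstacle — and the place where care is genuinely needed — is step (ii): bookkeeping the many terms produced by differentiating $\E{e^{-sX}\varphi_X(s)^{K_A}}$ a total of $n+1$ times and checking that the ones not matching $\E{K_A}\varphi_X^{(n+1)}(s)$ genuinely resum (up to $o(\varphi_X^{(n+1)}(s))$) to $\varphi_{X+\E{X}K_A}^{(n+1)}(s)$, rather than to something merely of the same order. The clean way to organize this is to write $\varphi_X(s)^{K_A}=e^{-s\E{X}K_A}\cdot\psi_s(K_A)$ with $\psi_s(k)=\exp\big(k[\log\varphi_X(s)+s\E{X}]\big)$, note that $\log\varphi_X(s)+s\E{X}=O(s^2)$ as $s\to0^+$, so $\psi_s(K_A)=1+O(s^2K_A^2)$ with all needed moments of $K_A$ finite; then
\begin{equation*}
\varphi_{D^R}(s)=\varphi_{X+\E{X}K_A}(s)+\E{e^{-sX-s\E{X}K_A}\big(\psi_s(K_A)-1\big)},
\end{equation*}
and one shows the $(n+1)$-st derivative of the error term is $o\!\big(\varphi_X^{(n+1)}(s)\big)$. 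Wait — this decomposition captures term (ii) but not term (i): the factor $K_A$ emerges precisely when a derivative falls on the $K_A$-linear exponent, so one must instead split more carefully, isolating the single term $\E{K_A e^{-sX}\varphi_X^{(n+1)\text{-type}}}$ before Taylor-expanding the rest. A cleaner route that sidesteps some of this is to differentiate only $n+1$ times and track, at each differentiation, which factor is hit, keeping an explicit remainder; since $n+1$ is a fixed finite number this is a finite (if tedious) expansion, and the regular-variation moment bounds guarantee every remainder integral converges. I expect roughly this to be what the authors relegate to the appendix, and the only conceptual input beyond bookkeeping is the repeated use of Remark~\eqref{rem:kar} together with the uniformity of $C_\alpha$ across $X$, $X+\E{X}K_A$ and $D^R$ when applying Karamata's Tauberian Theorem in both directions.
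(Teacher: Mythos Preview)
Your overall strategy --- compute $\varphi_{D^R}(s)=\E{e^{-sX}\varphi_X(s)^{K_A}}$, show $\varphi_{D^R}^{(n+1)}(s)\sim\varphi_{X+\E{X}K_A}^{(n+1)}(s)+\E{K_A}\varphi_X^{(n+1)}(s)$ as $s\to0^+$, then invoke Theorem~\eqref{thm:kar} in both directions --- is exactly the paper's approach, and your identification of the two dominant mechanisms (the single $\varphi_X^{(n+1)}$ factor from hitting one copy of $\varphi_X$ all $n{+}1$ times, and the recombination into $\varphi_{X+\E{X}K_A}$) is correct.

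Two of your intermediate claims are where the genuine difficulties hide, and the paper has to work harder there than your sketch suggests. First, regular variation of $(X,K_A)$ does \emph{not} force $X$ itself to be regularly varying with index $\alpha$: the limiting Radon measure may vanish on $\{(x_1,x_2):x_1>1\}$, in which case $\p{X>x}=o(\p{X+\E{X}K_A>x})$ and $\varphi_X^{(n+1)}(s)$ need not blow up. The paper therefore benchmarks all remainders against $\varphi_{X+\E{X}K_A}^{(n+1)}(s)+\E{K_A}\varphi_X^{(n+1)}(s)$ rather than $\varphi_X^{(n+1)}(s)$, and proves a separate Lemma~\eqref{lem:negli} to show $\varphi_X^{(n+1)}(s)=o(\varphi_{X+\E{X}K_A}^{(n+1)}(s))$ in the negligible-$X$ case. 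Second, your claim $\log\varphi_X(s)+s\E{X}=O(s^2)$ requires $\E{X^2}<\infty$, which fails precisely when $\alpha\in(1,2)$; in that regime the correction is of order $s^{\alpha}L(1/s)$ and your $\psi_s$-decomposition breaks down. The paper handles $\alpha\in(1,2)$ as a separate case, controlling the critical ratio $\big(\varphi_X^{(1)}(s)/\varphi_X(s)+\E{X}\big)/s$ via a dedicated Lemma~\eqref{lem:rvcomp}, and throughout replaces Taylor expansions by the convexity chain $-s\E{X}K\le K\log\varphi_X(s)\le sK\varphi_X^{(1)}(s)/\varphi_X(s)\le 0$ combined with the elementary bound $xe^{-x}\le e^{-1}$, which never demands moments beyond order $n$.
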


\begin{proof}[Proof of Proposition~\eqref{prop:sum-rp-trsf}]
 First, note that the Laplace-Stieltjes transform of $D^{R}$ in Equation~\eqref{eq:rp-d} is given by 
	\begin{align*}
		\varphi_{D^{R}}(s) \defeq \E{e^{-s X - s \sum_{j=1}^{K_A} X_j}}&= \E{\E{e^{-s X} e^{- s \sum_{j=1}^{K_A} X_j} \; \big\lvert \; A }}\\
  &= \E{e^{-sX} e^{K_A \log \E{e^{-sX}}}} \eqdef \E{e^{-sX + K_A \log \varphi_X(s)}}
	\end{align*}
upon recalling that $X \defeq f(A)$ and $K_A$ are independent conditionally on the ancestral mark $A$, and that $(X_j)_{j \geq 1}$ are i.i.d. and independent of $A$. 	
We first show that, for any noninteger $\alpha \in (n, n+1)$, $n \in \N$, $$\varphi_{D^{R}}^{(n+1)}(s) \sim \varphi_{X + \E{X}K_A}^{(n+1)}(s) + \E{K_A}\varphi_{X}^{(n+1)}(s), \text{ as } s \rightarrow 0^+,$$ where $\varphi_{X + \E{X}K_A}(s) \defeq \E{e^{-sX - s \E{X} K_A }},$ and where $\varphi_X^{(n)}$ is the $n$th derivative of the Laplace-Stieltjes transform of a random variable $X$.   	

We have to consider the following expression:
\begin{align}\label{eq:ordern}
	\bigg\lvert \varphi_{D^{R}}^{(n+1)}(s)-\big(\varphi_{X+\E{X}K_A}^{(n+1)}(s)+\E{K_A}\varphi_X^{(n+1)}(s)\big) \bigg\rvert &=	 \bigg\lvert \E{\bigg(-X+K_A \frac{\varphi_X^{(1)}(s)}{\varphi_X(s)}\bigg)^{n+1} e^{-sX + K_A \log \varphi_X(s)}} \nonumber \\
	&\quad + \E{K_A \frac{\varphi_X^{(n+1)}(s)}{\varphi_X(s)} e^{-sX + K_A \log \varphi_X(s)}} \nonumber \\
	&\quad - \E{\bigg(-X - \E{X}K_A\bigg)^{n+1}e^{-sX - s \E{X}K_A}} \nonumber \\
	&\quad - \E{K_A}\E{\big(-X\big)^{n+1}e^{-sX}} + C_{n+1} \bigg\rvert \nonumber \\
	&\eqdef \big\lvert B_1 + B_2 - B_3 - B_4 + C_{n+1} \big\rvert.
\end{align}

Consider first the difference $\big\lvert B_1 - B_3 \big\rvert.$ The following set of inequalities, directly due to the convexity of the function $\log \varphi_X(\cdot)$, will prove useful in controlling the above difference: for $s > 0$, we have
\begin{equation}\label{eq:convineq} 
	- s\E{X} K \leq K \log \varphi_X(s) \leq sK \frac{\varphi_X^{(1)}(s)}{\varphi_X(s)} \leq 0 \leq -sK \frac{\varphi_X^{(1)}(s)}{\varphi_X(s)} \leq -K \log \varphi_X(s) \leq s \E{X}K.
\end{equation}

Using the basic decomposition $(a^{n+1}-b^{n+1}) = (a-b) \sum_{k=0}^{n} a^{n-k}b^{k}$ as well as Equation~\eqref{eq:convineq} yields
\begin{align*}
	\big\lvert B_1 - B_3 \big\rvert &\leq \bigg\lvert \bigg(\frac{\varphi_X^{(1)}(s)}{\varphi_X(s)}+\E{X} \bigg) \\
 &\quad \cdot \E{K_A \bigg(\sum_{k=0}^n \bigg(-X+K_A \frac{\varphi_X^{(1)}(s)}{\varphi_X(s)}\bigg)^{n-k}\bigg(-X - \E{X} K_A \bigg)^{k} \bigg) e^{-sX+K_A \log \varphi_X(s)} } \bigg\rvert \\
	&\leq \bigg\lvert \bigg(\frac{\varphi_X^{(1)}(s)}{\varphi_X(s)}+\E{X} \bigg) \bigg( \E{K_A\bigg(-X + K_A \frac{\varphi_X^{(1)}(s)}{\varphi_X(s)}\bigg)^{n} e^{-sX + K_A \log \varphi_X(s)} } \\
	&\quad + \E{K_A\bigg(-X - \E{X} K_A\bigg)^{n} e^{-sX + K_A \log \varphi_X(s)} } \\
	&\quad + \E{K_A \bigg(\sum_{k=1}^{n-1} \bigg(-X+K_A \frac{\varphi_X^{(1)}(s)}{\varphi_X(s)}\bigg)^{n-k}\bigg(-X - \E{X} K_A \bigg)^{k} \bigg) e^{-sX+K_A \log \varphi_X(s)} } \bigg) \bigg\rvert \\
	&\eqdef \big\lvert G  (B_{11}+B_{12}+B_{13}) \big\rvert.
\end{align*} 
where $G \defeq \frac{\varphi_X^{(1)}(s)}{\varphi_X(s)} + \E{X}.$

We then treat each term separately. First, consider $B_{11}$. Using the binomial theorem, we have that $$\bigg(-X + K_A \frac{\varphi_X^{(1)}(s)}{\varphi_X(s)}\bigg)^n = \sum_{j=0}^n \binom{n}{j} \bigg(-X\bigg)^j \bigg(K_A \frac{\varphi_X^{(1)}(s)}{\varphi_X(s)}\bigg)^{n-j}.$$
Using the linearity of expectations, we separate the cases. Let $j=0$. Because $G > 0$, $K_A \frac{\varphi_X^{(1)}(s)}{\varphi_X(s)}<0$, using Equation~\eqref{eq:convineq} and the basic inequality $xe^{-x}\leq e^{-1}$, we get: 
\begin{align}
	\bigg\lvert G  \E{K_A \bigg(K_A \frac{\varphi_X^{(1)}(s)}{\varphi_X(s)} \bigg)^n e^{-sX + K_A \log \varphi_X(s)}} \bigg\rvert &\leq \frac{G}{s}  \E{K_A^{n-1} \bigg\lvert \bigg(\frac{\varphi_X^{(1)}(s)}{\varphi_X(s)} \bigg)^{n-1} \bigg\rvert \bigg(-K_A\log \varphi_X(s)\bigg)e^{K_A \log \varphi_X(s)}} \nonumber \\
	&\leq \frac{G}{s}  \E{K_A^{n-1} \bigg\lvert \bigg(\frac{\varphi_X^{(1)}(s)}{\varphi_X(s)} \bigg)^{n-1} \bigg\rvert e^{-1}}.\label{eq:tmp1} 
\end{align}
In order to control the upper bound, we need to control $G/s$, and we have to distinguish two cases: 
\begin{enumerate}
    \item[\textit{Case $\alpha \in (1,2)$:}] We have the identities
    \begin{align*}\frac{G}{s} &= \frac{\frac{\varphi_X^{(1)}(s)}{\varphi_X(s)} + \E{X}}{s} = \frac{\frac{\varphi_X^{(1)}(s)}{\varphi_X(s)} - \varphi_X^{(1)}(s)+ \varphi_X^{(1)}(s)+\E{X}}{s} = \varphi_X^{(1)}(s)\bigg(\frac{\frac{1}{\varphi_X(s)}-1}{s} \bigg) + \frac{\varphi_X^{(1)}(s)+\E{X}}{s}\,.
\end{align*}
The limit as $s \rightarrow 0^+$ of $\frac{\frac{1}{\varphi_X(s)}-1}{s}$ is the derivative of $1/\varphi_X(s)$ at $s=0$ and hence is finite; it follows that $$\varphi_X^{(1)}(s)\bigg(\frac{\frac{1}{\varphi_X(s)}-1}{s} \bigg) = \cO\big(\varphi_X^{(1)}(s) \big), \text{ as } s \rightarrow 0^+.$$
Now note that, for the second term, if first $X$ has negligible tails with respect to $X+\E{X}K_A$, by Lemma~\eqref{lem:rvcomp}, it follows that  $$\frac{\varphi_X^{(1)}(s)+\E{X}}{s} = o\big(\varphi_{X+\E{X}K_A}^{(2)}(s)+\E{K_A}\varphi_X^{(2)}(s)\big), \text{ as } s \rightarrow 0^+.$$ 
If $X$ is regularly varying with the same index as $X+\E{X}K_A$, then clearly, by adapting the proof of Lemma~\eqref{lem:rvcomp}, it follows that $$\frac{\varphi_X^{(1)}(s)+\E{X}}{s} = \cO\big(\varphi_{X+\E{X}K_A}^{(2)}(s)+\E{K_A}\varphi_X^{(2)}(s)\big), \text{ as } s \rightarrow 0^+.$$
By a dominated convergence argument, the upper bound in Equation~\eqref{eq:tmp1} is such that  $$\E{K_A^{n-1} \bigg\lvert \bigg(\frac{\varphi_X^{(1)}(s)}{\varphi_X(s)} \bigg)^{n-1} \bigg\rvert \bigg(-K_A\log \varphi_X(s)\bigg)e^{K_A \log \varphi_X(s)}}= o(1), \text{ as } s \rightarrow 0^+,$$ and combining with the arguments above, this proves that, no matter if $X$ is lighter or as heavy as the modulus $X+\E{X}K_A$, $$B_{11} = o\big(\varphi^{(2)}_{X + \E{X}K_A}(s)+ \E{K_A}\varphi_X^{(2)}(s)\big), \text{ as } s \rightarrow 0^+.$$ 
\item[\textit{Case $\alpha \in (n,n+1), \text{ with } n \in \N \backslash \{ 1 \}$:}] Using the definition of the derivative, as $s \rightarrow 0^+$, $$\lim_{s \rightarrow 0^+} \frac{G}{s} = \lim_{s \rightarrow 0^+} \frac{\frac{\varphi_X^{(1)}(s)}{\varphi_X(s)}+\E{X} }{s} \text{ and } \lim_{s \rightarrow 0^+} \frac{\frac{\frac{\varphi_X^{(1)}(s)}{\varphi_X(s)}+\E{X} }{s}}{\frac{\varphi_X^{(2)}(s)}{\varphi_X(s)}-\frac{(\varphi_X^{(1)}(s))^2}{(\varphi_X(s))^2}} = 1, $$ and, for this range of $\alpha \in (n, n+1)$ with $n \in \N \backslash \{1 \}$ $$\frac{\varphi_X^{(2)}(s)}{\varphi_X(s)}-\frac{(\varphi_X^{(1)}(s))^2}{(\varphi_X(s))^2} < \infty $$
which is finite since $\alpha \in (n, n+1)$ for $n \geq 2$. Because $\varphi_X^{(1)}(s)$ is finite,   Equation~\eqref{eq:tmp1} is finite. Upon applying Theorem~\eqref{thm:kar}, it follows that, as $s \rightarrow 0^+$, $$\bigg\lvert G  \E{K_A \bigg(K_A \frac{\varphi_X^{(1)}(s)}{\varphi_X(s)} \bigg)^n e^{-sX + K_A \log \varphi_X(s)}} \bigg\rvert = o\big(\varphi_{X+\E{X}K_A}^{(n+1)}(s)+\E{K_A}\varphi_X^{(n+1)}(s)\big).$$
\end{enumerate}

The treatment of terms where $j>0$ is easier: it is sufficient to note that, whenever $X$ appears in the product, one can always ``lose a power": suppose without loss of generality that $j=1$ in the decomposition due to the binomial theorem above; we are left to consider the following term
$$\bigg\lvert G  \E{K_A \bigg\{ \binom{n}{1} \bigg(-X\bigg)^1 \bigg(K_A \frac{\varphi_X^{(1)}(s)}{\varphi_X(s)}\bigg)^{n-1} \bigg\} e^{-sX + K_A \log \varphi_X(s)} } \bigg\rvert.$$
This is smaller than 
$$ \frac{G}{s}  \E{\binom{n}{1}K_A^n  \bigg\lvert \bigg(\frac{\varphi_X^{(1)}(s)}{\varphi_X(s)}\bigg)^{n-1}\bigg\rvert  \big(sX\big) e^{-sX}} \leq \frac{G}{s}  \E{\binom{n}{1}K_A^n  \bigg\lvert \bigg(\frac{\varphi_X^{(1)}(s)}{\varphi_X(s)}\bigg)^{n-1}\bigg\rvert  e^{-1}},  $$
and by similar reasoning as above, the expectation as well as the whole of the upper bound is finite. All in all, this shows that, as $s \rightarrow 0^+$, 
\begin{multline*}
    \bigg\lvert G  \E{K_A \bigg\{ \binom{n}{1} \bigg(-X\bigg)^1 \bigg(K_A \frac{\varphi_X^{(1)}(s)}{\varphi_X(s)}\bigg)^{n-1} \bigg\} e^{-sX + K_A \log \varphi_X(s)} } \bigg\rvert  
 \\ = o\big(\varphi_{X+\E{X}K_A}^{(n+1)}(s)+\E{K_A}\varphi_X^{(n+1)}(s)\big)
\end{multline*}
Upon applying the same arguments on all terms making up $B_{11}$, using at times H\"{o}lder's inequality to justify that expectations of the form $\E{K_A \big(-X\big)^{j-1} \big(K_A \frac{\varphi_X^{(1)}(s)}{\varphi_X(s)}\big)^{n-j}}$ for $2 \leq j \leq n-1$ are finite, and one $X$ is factorised as in the reasoning above, this is sufficient to show that $$\big\lvert G  B_{11} \big\rvert = o\big(\varphi_{X+\E{X}K_A}^{(n+1)}(s)+\E{K_A}\varphi_X^{(n+1)}(s)\big), \text{ as } s \rightarrow 0^+.$$

A completely analogous approach - omitted for brevity - shows that $$\big\lvert G  B_{12} \big\rvert = o\big(\varphi_{X+\E{X}K_A}^{(n+1)}(s)+\E{K_A}\varphi_X^{(n+1)}(s)\big), \text{ as } s \rightarrow 0^+$$
replacing only the appeal to Equation~\eqref{eq:convineq} by the fact that we can always find $s>0$ small enough such that $s\E{X}  \leq -2\log \varphi_X(s)$, which holds because of the following reasoning: since $\varphi_X(s)$ is differentiable at 0, by the integrability of $X$, one obtains $$\lim_{s \rightarrow 0^+} -\frac{\log \varphi_X(s)}{s} = -\frac{\varphi_X^{(1)}(0)}{\varphi_X(0)} = -\E{-X} \iff \lim_{s \rightarrow 0^+} -\frac{\log \varphi_X(s)}{s} = \E{X}.$$ By a similar argument, $-\frac{2\log \varphi_X(s)}{s} \rightarrow 2 \E{X}, \text{ as } s \rightarrow 0^+$. Hence, there exists $s > 0$ small enough such that $$s\E{X} \leq -2 \log \varphi_X(s).$$ 

Finally, consider $\big\lvert G  B_{13} \big\rvert$. The sum given can be factorised as 
\begin{multline*}
    \sum_{k=1}^{n-1} \bigg(-X+K_A \frac{\varphi_X^{(1)}(s)}{\varphi_X(s)}\bigg)^{n-k}\bigg(-X - \E{X} K_A \bigg)^{k} \\ = \bigg(-X + K_A \frac{\varphi_X^{(1)}(s)}{\varphi_X(s)}\bigg) \sum_{k=1}^{n-1} \bigg(-X+K_A \frac{\varphi_X^{(1)}(s)}{\varphi_X(s)}\bigg)^{n-1-k}\bigg(-X - \E{X} K_A \bigg)^{k}
\end{multline*}

Now this yields, upon using Equation~\eqref{eq:convineq} and the basic inequality $x e^{-x} \leq e^{-1}$ in the last step, 
\begin{align*}
	\lvert G  B_{13} \rvert &= \frac{G}{s}  \mathbb{E}\bigg[K_A\bigg\lvert \bigg(-sX + K_A s \frac{\varphi_X^{(1)}(s)}{\varphi_X(s)}\bigg) \bigg\rvert e^{-\big(sX - K_A \log \varphi_X(s)\big)} \\ 
 &\qquad \cdot \sum_{k=1}^{n-1} \bigg\lvert\bigg(-X + K_A \frac{\varphi_X^{(1)}(s)}{\varphi_X(s)}\bigg)^{n-k-1} \bigg(-X - \E{X}K_A \bigg)^{k}\bigg\rvert\bigg] \\
	&\leq \frac{G}{s}  \mathbb{E}\bigg[K_A\bigg(sX - K_A \log \varphi_X(s) \bigg) e^{-\big(sX - K_A \log \varphi_X(s)\big)} \\
 &\qquad \cdot \sum_{k=1}^{n-1} \bigg\lvert\bigg(-X + K_A \frac{\varphi_X^{(1)}(s)}{\varphi_X(s)}\bigg)^{n-k-1} \bigg(-X - \E{X}K_A \bigg)^{k}\bigg\rvert \bigg] \\
	&\leq \frac{G}{s}  \E{K_A e^{-1} \sum_{k=1}^{n-1} \bigg\lvert\bigg(-X + K_A \frac{\varphi_X^{(1)}(s)}{\varphi_X(s)}\bigg)^{n-k-1} \bigg(-X - \E{X}K_A \bigg)^{k}\bigg\rvert}.
\end{align*}
The highest order of the product of the summands above is of power $n$: again, since $\alpha \in (n, n+1)$, using H\"{o}lder's inequality, the expectation is finite. Overall, this shows once again that $$\big\lvert G  B_{13} \big\rvert = o\big(\varphi_{X+\E{X}K_A}^{(n+1)}(s)+\E{K_A}\varphi_X^{(n+1)}(s)\big), \text{ as } s \rightarrow 0^+.$$
Collecting all of the above bounds, this shows that $$\big\lvert B_1 - B_3 \big\rvert = o\big(\varphi_{X+\E{X}K_A}^{(n+1)}(s)+\E{K_A}\varphi_X^{(n+1)}(s)\big), \text{ as } s \rightarrow 0^+.$$

Consider the difference $\big\lvert B_2 - B_4 \big\rvert,$ and note that one can write it as 
\begin{align*}
	\big\lvert B_2 - B_4 \big\rvert &= \bigg\lvert \frac{\varphi_X^{(n+1)}(s)}{\varphi_X(s)} \E{K_A \big(1-e^{-sX + K_A \log \varphi_X(s)}\big)} + \varphi_X^{(n+1)}(s)\bigg(\frac{1}{\varphi_X(s)} - 1 \bigg) \E{K_A} \bigg\rvert \\
	&\eqdef \big\lvert B_{21}+B_{22} \big\rvert.
\end{align*}
Now, by a dominated convergence argument as before, one has that $\E{K_A \big(1-e^{-sX + K_A \log \varphi_X(s)}\big)}=o(1)$, as $s \rightarrow 0^+,$ and hence, that $$\big\lvert B_{21} \big\rvert = o\big(\varphi_X^{(n+1)}(s)\big) = o\big(\varphi_{X+\E{X}K_A}^{(n+1)}(s)+\E{K_A}\varphi_X^{(n+1)}(s)\big), \text{ as } s \rightarrow 0^+. $$ Similarly, by the integrability of $K_A$, $$\big\lvert B_{22} \big\rvert = o\big(\varphi_X^{(n+1)}(s)\big) = o\big(\varphi_{X+\E{X}K_A}^{(n+1)}(s)+\E{K_A}\varphi_X^{(n+1)}(s)\big), \text{ as } s \rightarrow 0^+.$$ Collecting the above, this implies that $$\big\lvert B_2 - B_4 \big\rvert = o\big(\varphi_{X+\E{X}K_A}^{(n+1)}(s)+\E{K_A}\varphi_X^{(n+1)}(s)\big), \text{ as } s \rightarrow 0^+.$$
Lastly, the terms making up $C_{n+1}$ when $\alpha \in (n,n+1)$ are all the terms (and cross-products) of order strictly lower than $n+1$ and, consequently, are finite. It follows by Theorem~\eqref{thm:kar} that  $$C_{n+1} = o\big(\varphi_{X+\E{X}K_A}^{(n+1)}(s)+\E{K_A}\varphi_X^{(n+1)}(s)\big), \text{ as } s \rightarrow 0^+.$$
All in all, this essentially shows that, as $s \rightarrow 0^+$,
$$\bigg\lvert \varphi_{D^{R}}^{(n+1)}(s)-\big(\varphi_{X+\E{X}K_A}^{(n+1)}(s)+\E{K_A}\varphi_X^{(n+1)}(s)\big) \bigg\rvert = o\big(\varphi_{X+\E{X}K_A}^{(n+1)}(s)+\E{K_A}\varphi_X^{(n+1)}(s)\big),$$ and hence that $$\varphi_{D^{R}}^{(n+1)}(s) \sim \varphi_{X+\E{X}K_A}^{(n+1)}(s)+\E{K_A}\varphi_X^{(n+1)}(s), \text{ as } s \rightarrow 0^+,$$ and this equivalence holds for any $\alpha \in (n, n+1)$, $n \in \N$. 

Because the modulus $X + \E{X}K_A$ is regularly varying whenever $(X, K_A)$ is - see Remark~\eqref{rem:modulus} - Karamata's Theorem~\eqref{thm:kar} implies that $$\varphi_{X+\E{X}K_A}^{(n+1)}(s) \sim C_{\alpha} s^{\alpha-\lceil \alpha \rceil}L_{X + \E{X}K_A}(1/s), \text{ as } s \rightarrow 0^+$$ for some slowly varying function $L_{X + \E{X}K_A}(\cdot)$. 
Then, suppose first that $X$ is not regularly varying and has negligible tails with respect to the modulus $X + \E{X}K_A$. 
Then Lemma~\eqref{lem:negli} yields that $$\varphi_X^{(n+1)}(s) = o\big(\varphi_{X+ \E{X}K_A}^{(n+1)}(s)\big), \text{ as } s \rightarrow 0^+$$ and hence, this implies that $$\varphi_{D^R}^{(n+1)}(s) \sim C_{\alpha}s^{\alpha-\lceil \alpha \rceil}L_{X+\E{X}K_A}(1/s)\big(1+o(1)\big), \text{ as } s \rightarrow 0^+$$ which yields by re-applying Karamata's Tauberian Theorem~\eqref{thm:kar}, that $$\p{D^R > x} \sim x^{-\alpha} L_{D^R}(x) \sim x^{-\alpha}L_{X+\E{X}K_A}(x)\big(1+o(1)\big), \text{ as } x \rightarrow \infty.$$

In the case where $X$ is regularly varying, by Example~\eqref{ex:proj}, and because $X$ has the same index $\alpha > 1$ as the modulus $X+\E{X}K_A$, if the limiting Radon measure is non-null on the correct subspace,  Karamata's Tauberian Theorem~\eqref{thm:kar} yields, $$\varphi_X^{(n+1)}(s) \sim C_{\alpha} s^{\alpha-\lceil \alpha \rceil} L_{X}(1/s), \text{ as } s \rightarrow 0^+.$$ Then, for each $n \in \N$ 
\begin{align*}
    \varphi_{D^{R}}^{(n+1)}(s) \sim 2C_{\alpha}s^{\alpha-\lceil \alpha \rceil}\big(L_{X + \E{X}K_A}(1/s) + \E{K_A} L_{X}(1/s) \big), \text{ as } s \rightarrow 0^+,
\end{align*}
and because the sum of two slowly varying function is still a slowly varying function, \linebreak $L_D^R(\cdot) \defeq L_{X + \E{X}K_A}(\cdot) + \E{K_A}L_{X}(\cdot)$ is slowly varying. Applying again Karamata's Tauberian Theorem~\eqref{thm:kar} in the other direction, yields $$\p{D^R > x} \sim x^{-\alpha} L_{D^R}(x) \sim x^{-\alpha} \big(L_{X + \E{X}K_A}(x) + \E{K_A}L_{X}(x) \big),  \text{ as } x \rightarrow \infty$$ which yields the desired result and the proof is complete. 
\end{proof}

\begin{rem}\label{rem:modulus}
	Note that the assumption that the random vector $(X, K_A)$ is regularly varying with index $\alpha>1$ ensures, by Proposition~\eqref{prop:mod}, that $X+\E{X}K_A$ is regularly varying with the same index $\alpha>1$. Indeed, it can be easily seen that $\rho(X, K_A) \defeq X+\E{X}K_A$ is a modulus (in the sense made precise in Section 3), provided that $\E{X} \neq 0$, which is a natural assumption to make, since $X$ is taken to be nonnegative.  
\end{rem}

\begin{rem}



	Note that the findings of Proposition~\eqref{prop:sum-rp-trsf} are consistent with the findings of \cite{fgams06}: in particular, if $X$ and $K_A$ are independent - which is the setting in the aforementioned paper - or even if $X$ and $K_A$ are asymptotically independent (i.e. if $\p{X > x, \E{X}K_A > x} = o\big(\p{X > x}  \p{\E{X}K_A > x}\big)$, as $x \rightarrow \infty$) then the proposed asymptotics of Proposition~\eqref{prop:sum-rp-trsf} encompass three cases, depending on the relation between $X$ and $K_A$:
 \begin{enumerate}
     \item when  $\p{K_A> x} = o(\p{X > x}), \text{ as } x \rightarrow \infty$, then $\p{X + \E{X}K_A > x} \sim \p{X > x}$, as $x \rightarrow \infty.$ From Proposition~\eqref{prop:sum-rp-trsf}, this means that $$\p{D^R > x} \sim \p{X > x} + \E{K_A} \p{X > x} \sim (\E{K_A}+1) \p{X>x}, \text{ as } x \rightarrow \infty$$ which is equivalent to Proposition 4.1 in \cite{fgams06};
     \item when $\p{X > x} = o(\p{K_A> x}), \text{ as } x \rightarrow \infty$, then $\p{X + \E{X}K_A > x} \sim (\E{X})^{\alpha}\p{K_A> x}$, as $x \rightarrow \infty$. From Proposition~\eqref{prop:sum-rp-trsf}, this means that $$\p{D^R > x} \sim (\E{X})^{\alpha}\p{K_A> x}, \text{ as } x \rightarrow \infty$$ which is equivalent to Proposition 4.3 in \cite{fgams06};
     \item lastly, when $\p{K_A> x} \sim c\p{X > x}$, as $x \rightarrow \infty$, for $c > 0$, then $\p{X + \E{X}K_A > x} \sim \p{X>x} + c (\E{X})^{\alpha} \p{X>x}, \text{ as } x \rightarrow \infty$.   From Proposition~\eqref{prop:sum-rp-trsf}, this means that $$\p{D^R > x} \sim \big(\E{K_A}+1+c(\E{X})^{-\alpha}\big)\p{X > x}, \text{ as } x \rightarrow \infty$$ which is equivalent to Lemma 4.7 in \cite{fgams06}. 
 \end{enumerate}
 
Our approach offers a more flexible framework for dependence between the governing components of the clusters, namely $X$ and $K_A$. Yet, in this latter direction, and more closely related to our results, \cite{oc21} shows in a recent contribution that $$\p{D^R > x} \sim \I{\E{K_A} < \infty } \E{K_A} \p{X > x} + \p{\E{X} K_A > x}, \text{ as } x \rightarrow \infty,  $$ in the regime where $(X, K_A)$ are arbitrarily dependent and either $K_A$ is intermediate regularly varying and $\p{X > x} = o(\p{K_A > x}), \text{ as } x \rightarrow \infty$ (Theorem 6.10 in \cite{oc21}) or $X$ is intermediate regularly varying and $\p{K_A > x} = o(\p{X > x}), \text{ as } x \rightarrow \infty$ (Theorem 6.11 in \cite{oc21}). The novelty in this paper is to propose similar asymptotics in the case where $K_A$ and $X$ are effectively tail equivalent. 

\end{rem}

Note that the content of Proposition~\eqref{prop:sum-rp-trsf} is a kind of "double" big-jump principle: the heavy-tailedness introduced by letting the vector $(X, K_A)$ be regularly varying implies that there is two ways for the sum $D^R$ to be large; either through a combination of the dependent variables $X$ and $K_A$ or through the classical single big-jump coming from the additional term $\E{K_A}\p{X > x}$ consisting of the offspring events. 

\section{Tail asymptotics of the maximum functional in the Hawkes process}

We now propose a single big-jump principle concerning the maximum functional of a generic cluster in the settings of the Hawkes process. Recall that $\E{L_A} = \E{\kappa_A} = 1.$

\begin{prop}\label{prop:max-hawkes-trsf}
	Suppose the vector $(X, \kappa_A)$ in Equation~\eqref{eq:h-h} is regularly varying with index $\alpha > 1$ and non-null Radon measure $\mu$. Then, $$\p{H^{H}>x} \sim \frac{1}{1-\E{\kappa_A}}\p{X>x}, \text{ as } x \rightarrow \infty.$$
Moreover, if $\mu(\{(x_1, x_2) \in \R^2_{+, \bZ}: x_1 > 1\}) > 0$, then $H^{H}$ is regularly varying with index $\alpha>1$. 
\end{prop}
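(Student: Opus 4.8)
The plan is to bypass Laplace transforms and work directly with the tail $\bar F_H(x) \defeq \p{H^H > x}$, exploiting the fact that $L_A$ is conditionally Poisson. Since a subcritical Hawkes cluster ($\E{\kappa_A}<1$) contains almost surely finitely many points, each carrying a finite mark, $H^H<\infty$ almost surely, so $\bar F_H(x)\to 0$ as $x\to\infty$. Conditioning on the ancestral mark $A$ in Equation~\eqref{eq:h-h} --- given $A$, the subcluster maxima $(H_j^H)$ are i.i.d.\ copies of $H^H$ and independent of $L_A\sim\mathrm{Poisson}(\kappa_A)$ --- the Poisson probability generating function gives $\E{(1-\bar F_H(x))^{L_A}\mid A}=e^{-\kappa_A\bar F_H(x)}$, whence
\begin{equation*}
  \bar F_H(x) = \p{X>x} + \E{\I{X\le x}\big(1 - e^{-\kappa_A\bar F_H(x)}\big)}.
\end{equation*}
Everything then follows from squeezing $\ell(x)\defeq\bar F_H(x)/\p{X>x}$, which is well defined ($\p{X>x}>0$ since $X$ is regularly varying) and satisfies $\ell(x)\ge 1$ by the display above.

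For the upper bound I would use $1-e^{-u}\le u$ with $u=\kappa_A\bar F_H(x)$ to get $\bar F_H(x)\le\p{X>x}+\E{\kappa_A}\bar F_H(x)$, i.e.\ $\bar F_H(x)\le(1-\E{\kappa_A})^{-1}\p{X>x}$ for every $x$; in particular $\limsup_{x\to\infty}\ell(x)\le(1-\E{\kappa_A})^{-1}$ and $\bar F_H(x)^2/\p{X>x}\to 0$. For the matching lower bound, fix $M>0$ and apply $1-e^{-u}\ge u-u^2/2$ on $\{\kappa_A\le M\}$ to obtain
\begin{equation*}
  \bar F_H(x) \ge \p{X>x} + \bar F_H(x)\,\E{\kappa_A\I{\kappa_A\le M,\, X\le x}} - \tfrac12 M^2\bar F_H(x)^2 .
\end{equation*}
Dividing by $\p{X>x}$, letting $x\to\infty$ (the truncated moment tends to $\E{\kappa_A\I{\kappa_A\le M}}$ by monotone convergence and the quadratic term vanishes), then $M\to\infty$ (so $\E{\kappa_A\I{\kappa_A\le M}}\uparrow\E{\kappa_A}<1$), gives $\liminf_{x\to\infty}\ell(x)\ge(1-\E{\kappa_A})^{-1}$. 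The two bounds yield $\bar F_H(x)\sim(1-\E{\kappa_A})^{-1}\p{X>x}$. For the ``moreover'' part, the condition $\mu(\{(x_1,x_2)\in\R^2_{+,\bZ}:x_1>1\})>0$ together with Example~\eqref{ex:proj} makes $X$ regularly varying with index $\alpha$, and a positive constant multiple (up to an asymptotically negligible perturbation) of a regularly varying tail is regularly varying with the same index, so $H^H$ is regularly varying with index $\alpha$.

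The step I expect to be the main obstacle is making the linearization $1-e^{-\kappa_A\bar F_H(x)}\approx\kappa_A\bar F_H(x)$ uniform when $\alpha\in(1,2)$, where $\E{\kappa_A^2}$ may be infinite: the truncation at level $M$, combined with the a priori bound $\bar F_H(x)\le(1-\E{\kappa_A})^{-1}\p{X>x}$ that turns the quadratic remainder into a genuine $o(\p{X>x})$, is exactly what pushes the argument through. A secondary point requiring justification is the fixed-point identity for $\bar F_H$ itself --- that $H^H$ solves Equation~\eqref{eq:h-h} with the stated conditional independence structure --- which rests on the Poisson/branching description of the Hawkes cluster recalled in Section~2.3.
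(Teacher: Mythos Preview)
Your argument is correct, and it is genuinely different from the paper's proof in Appendix~\ref{app:2}. The paper conditions on the \emph{values} of $L_A$ and expands $(1-\bar F_H(x))^{k}$ via Taylor's theorem, exactly as in the proof of Proposition~\ref{prop:max-rp-trsf}, arriving at the implicit relation $\p{H^{H}>x}=\p{X>x}+\E{L_A}\p{H^{H}>x}+o(\p{H^{H}>x})$ and then solving. You instead condition on the ancestral mark $A$ and exploit the Poisson structure through the probability generating function, which collapses the sum over $k$ into the closed form $e^{-\kappa_A\bar F_H(x)}$. The pair of explicit inequalities $u-u^2/2\le 1-e^{-u}\le u$, combined with the a priori bound $\bar F_H(x)\le(1-\E{\kappa_A})^{-1}\p{X>x}$ and the truncation $\{\kappa_A\le M\}$, replaces the paper's somewhat heavier bookkeeping of asymptotic remainders; in particular, your treatment of the regime $\alpha\in(1,2)$ (where $\E{\kappa_A^2}$ may fail) is more transparent. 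What your route buys is brevity and an explicit non-asymptotic upper bound valid for every $x$; what the paper's route buys is that it mirrors the proof of Proposition~\ref{prop:max-rp-trsf}, where no Poisson structure is available for $K_A$, so a single template covers both models.

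One small point: you assert $\p{X>x}>0$ ``since $X$ is regularly varying'', but regular variation of $X$ is only guaranteed under the measure condition in the ``moreover'' clause (cf.\ Example~\ref{ex:proj}). For the bare asymptotic equivalence this is harmless, because the statement $\p{H^H>x}\sim(1-\E{\kappa_A})^{-1}\p{X>x}$ is vacuous if $X$ is bounded, but it would be cleaner to say that the ratio $\ell(x)$ is considered on the (possibly unbounded) support of $X$.
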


\begin{proof}[Proof of Proposition~\eqref{prop:max-hawkes-trsf}]
    The proof can be found in Appendix~\eqref{app:2} and follows the same approach as the proof of Proposition~\eqref{prop:max-rp-trsf}.
\end{proof}

\begin{rem}
    As hinted in Section~\ref{sct:introduction}, a closely related work concerning the maxima of the marks in a generic cluster of the Hawkes process can be found in \cite{bmz22}. Under the assumption that $K_A$ is a stopping time with respect to a filtration including the information about $(X_{ij})$, it is shown in their Lemma 4.1 that $H^H$ falls in the same MDA as $X$. What we propose in Proposition~\eqref{prop:max-hawkes-trsf} is merely a refinement for the Fr\'{e}chet MDA, describing explicitly the tail of $H^H$.  
\end{rem} 

\section{Tail asymptotics of the sum functional in Hawkes process}

We now propose another "double" big-jump principle concerning the sum functional of a generic cluster in the setting of the Hawkes process. The tail approximation obtained in Proposition~\eqref{prop:sum-hawkes-trsf} below is in fact very similar to the one in Proposition~\eqref{prop:sum-rp-trsf}, where both a single big-jump principle and a combination of the effects of the dependent variables $X$ and $\kappa_A$ yield large values for $D^H$. 

\begin{prop}\label{prop:sum-hawkes-trsf}
	Assume that $(X, \kappa_A)$ in Equation~\eqref{eq:h-d} has a regularly varying distribution with noninteger index $\alpha>1$. Then, $(X, L_A)$ is regularly varying with the same index $\alpha$. Further, $D^{H}$ is regularly varying with index $\alpha$. In fact, $$\p{D^{H}>x} \sim \frac{1}{1-\E{\kappa_A}} \p{X+\bigg(\frac{\E{X}}{1-\E{\kappa_A}}\bigg)\kappa_A > x}, \text{ as } x \rightarrow \infty. $$ 
\end{prop}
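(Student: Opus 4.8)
The plan is to mirror the proof of Proposition~\eqref{prop:sum-rp-trsf} as closely as possible, working through Laplace--Stieltjes transforms and Karamata's Tauberian Theorem~\eqref{thm:kar}. First I would establish that $(X, L_A)$ is regularly varying with the same index $\alpha$ as $(X, \kappa_A)$: conditionally on $A$, $L_A$ is Poisson$(\kappa_A)$, and since a Poisson random variable has all moments finite and lighter-than-any-power tails, the tail of $L_A$ is driven by that of $\kappa_A$; concretely one checks via Proposition~\eqref{prop:lincomb} (regular variation of linear combinations) that $\p{s_1 X + s_2 L_A > x} \sim \p{s_1 X + s_2 \kappa_A > x}$ as $x\to\infty$ by conditioning on $A$ and using concentration of the Poisson around its mean, so $w(\bt)$ is unchanged. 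This also shows $\E{L_A} = \E{\kappa_A}$ and that $X + \E{X}/(1-\E{\kappa_A})\,\kappa_A$ is a modulus for $(X,\kappa_A)$ (by Remark~\eqref{rem:modulus}-type reasoning, since $\E{X}/(1-\E{\kappa_A}) \neq 0$), hence regularly varying with index $\alpha$.

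Next I would set up the fixed-point equation for the transform. Writing $\varphi(s) \defeq \varphi_{D^H}(s) = \E{e^{-sD^H}}$, the distributional equation~\eqref{eq:h-d} together with the conditional Poisson structure gives, conditioning first on $A$ and then using independence of the i.i.d.\ copies $(D_j^H)$,
\begin{equation*}
\varphi(s) = \E{e^{-sX}\,\E{\varphi(s)^{L_A}\mid A}} = \E{e^{-sX}\,e^{\kappa_A(\varphi(s)-1)}} = \E{e^{-sX + \kappa_A(\varphi(s)-1)}}.
\end{equation*}
This is the Hawkes analogue of the relation $\varphi_{D^R}(s) = \E{e^{-sX + K_A\log\varphi_X(s)}}$, with $K_A\log\varphi_X(s)$ replaced by $\kappa_A(\varphi(s)-1)$. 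The key asymptotic fact, which plays the role that $\log\varphi_X(s) \sim -s\E{X}$ did before, is that $\varphi(s)-1 \sim -s\,\E{D^H} = -s\,\E{X}/(1-\E{\kappa_A})$ as $s\to 0^+$, since $\E{D^H} = \E{X}/(1-\E{\kappa_A})$ by taking expectations in~\eqref{eq:h-d}. So the ``effective" linear coefficient attached to $\kappa_A$ is $\E{X}/(1-\E{\kappa_A})$, explaining the stated modulus. I would then differentiate the fixed-point identity $\lceil\alpha\rceil = n+1$ times and rearrange to isolate $\varphi^{(n+1)}(s)$: because $\varphi'(s)$ appears on the right after differentiation (the exponent involves $\varphi(s)$), one gets $\varphi^{(n+1)}(s)\bigl(1 - \E{\kappa_A e^{-sX+\kappa_A(\varphi(s)-1)}}\bigr) = (\text{lower-order and remainder terms})$, and the prefactor tends to $1 - \E{\kappa_A} > 0$ by dominated convergence; this is exactly where the factor $1/(1-\E{\kappa_A})$ enters. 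The target is to show
\begin{equation*}
\varphi^{(n+1)}_{D^H}(s) \sim \frac{1}{1-\E{\kappa_A}}\,\varphi^{(n+1)}_{X + \frac{\E{X}}{1-\E{\kappa_A}}\kappa_A}(s), \quad s\to 0^+,
\end{equation*}
after which Karamata's Tauberian Theorem~\eqref{thm:kar} applied in both directions (forward on the regularly varying modulus, backward on $D^H$) yields the claimed tail equivalence.

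The heavy lifting is the bookkeeping that all the discrepancy terms are $o(\varphi^{(n+1)}_{\text{modulus}}(s))$. I would reuse the convexity/comparison inequalities: since $v\mapsto e^{v}$ is convex and $\varphi(s)-1 \le 0$, one has bounds of the form $-s\E{D^H}\kappa_A \le \kappa_A(\varphi(s)-1) \le s\kappa_A\varphi'(s) \le 0$ for $s$ small, the analogue of Equation~\eqref{eq:convineq}. Using the binomial expansion of $(-X + \kappa_A\varphi'(s))^{n+1}$ and the elementary inequality $ue^{-u}\le e^{-1}$, every term in which an $X$ or an extra power of $s\kappa_A$ can be ``traded for a loss of a power" becomes finite and negligible, exactly as in the treatment of $B_{11}, B_{12}, B_{13}$ and $C_{n+1}$ in the renewal case, with the cases $\alpha\in(1,2)$ and $\alpha\in(n,n+1)$, $n\ge 2$, handled separately and Lemma~\eqref{lem:rvcomp}/Lemma~\eqref{lem:negli} invoked to compare $\varphi_X^{(n+1)}$ with $\varphi^{(n+1)}_{\text{modulus}}$ according to whether $X$ is as heavy as, or lighter than, the modulus. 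The one genuinely new obstacle is that here $\varphi$ is only defined implicitly through the fixed-point equation, so I cannot compute $\varphi'(s)$, $\varphi''(s)$ in closed form; I expect to need a preliminary lemma establishing that $\varphi$ is $(n+1)$-times differentiable on $(0,\infty)$ with $\varphi^{(k)}(0^+)$ finite for $k\le n$ (equivalently $\E{(D^H)^k}<\infty$ for $k\le n$, which follows from $\E{X^k}<\infty$, subcriticality, and a moment recursion on~\eqref{eq:h-d} à la Wald), and then to run the dominated-convergence and Hölder arguments using only these implicit bounds plus $\varphi^{(n+1)}(s)\to\infty$ (Remark~\eqref{rem:kar}). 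Controlling the self-referential term $\E{\kappa_A^{j}(\varphi'(s)/\!\!\;)^{\ldots}e^{-sX+\kappa_A(\varphi(s)-1)}}$ uniformly as $s\to 0^+$ — in particular showing the prefactor $1-\E{\kappa_A e^{\cdots}}$ stays bounded away from $0$ and that dividing by it does not destroy the asymptotic equivalence — is the main place where care beyond the renewal case is required.
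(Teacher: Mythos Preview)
Your proposal is sound in principle but diverges from the paper's proof in the second half. For the first step, showing that $(X,L_A)$ inherits regular variation from $(X,\kappa_A)$, you and the paper take essentially the same route via Proposition~\eqref{prop:lincomb}: the paper writes $\varphi_{t_1X+t_2L_A}(s)=\E{e^{-st_1X-(1-e^{-st_2})\kappa_A}}$ explicitly and compares its $(n+1)$-th derivative with $\varphi_{t_1X+t_2\kappa_A}^{(n+1)}(s)$ using Taylor bounds $s-(1-e^{-s})\le s^2/2$, $-(1-e^{-s})\le -s/2$ and $xe^{-x}\le e^{-1}$, then invokes Karamata's Theorem~\eqref{thm:kar}; your ``Poisson concentration around the mean'' is the same idea stated less precisely.

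The genuine difference is in deriving the tail of $D^H$ itself. You propose to work directly with the implicit fixed-point equation $\varphi_{D^H}(s)=\E{e^{-sX+\kappa_A(\varphi_{D^H}(s)-1)}}$, differentiate $\lceil\alpha\rceil$ times, isolate the self-referential term to produce the factor $1/(1-\E{\kappa_A})$, and then replay the $B_1,B_2,B_3,C_{n+1}$ bookkeeping of Proposition~\eqref{prop:sum-rp-trsf}. The paper does none of this: once $(X,L_A)$ is known to be regularly varying, it simply invokes Theorem~1 of \cite{af18} as a black box to obtain the stated asymptotic for $\p{D^H>x}$. What you gain from your approach is self-containment (no external reference needed) and a transparent origin for the constant $1/(1-\E{\kappa_A})$; what the paper gains is brevity, since the Fa\`a~di~Bruno-type expansion you would face when differentiating an expression containing $\varphi_{D^H}$ inside its own exponent is combinatorially heavier than anything in the renewal case, and the preliminary moment lemma $\E{(D^H)^k}<\infty$ for $k\le n$ that you correctly flag as necessary is itself nontrivial to state and prove cleanly. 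Your approach should work, but the paper's is the shorter path.
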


\begin{proof}[Proof of Proposition~\eqref{prop:sum-hawkes-trsf}]
Recall that the assumption that $(X, \kappa_A)$ is regularly varying with index $\alpha>1$ is equivalent to the regular variation of the linear combinations $t_1 X + t_2 \kappa_A$ for all $t_1, t_2 \in \R_+$ by Proposition~\eqref{prop:lincomb}. Similarly as in the proof of Proposition~\eqref{prop:sum-rp-trsf}, if we can show, at any order $(n+1)$ for $n \in \N$, and for any $t_1, t_2 \in \R_+$, that the behaviour of $\varphi_{t_1X+t_2 \kappa_A}^{(n+1)}(s) \defeq \frac{\partial^{n+1}}{\partial s^{n+1}} \big(\E{e^{-s(t_1 X + t_2 \kappa_A)}}\big),$ and that of $\varphi_{t_1X+t_2 L_A}^{(n+1)}(s) \defeq \frac{\partial^{n+1}}{\partial s^{n+1}} \big(\E{e^{-s(t_1 X + t_2 L_A)}}\big)$, as $s \rightarrow 0^+$ are comparable, i.e. if $$\varphi_{t_1X+t_2 \kappa_A}^{(n+1)}(s) \sim \varphi_{t_1X+t_2 L_A}^{(n+1)}(s), \text{ as } s \rightarrow 0^+,$$ then by Karamata's Theorem~\eqref{thm:kar}, we have $$ \p{t_1 X + t_2 \kappa_A > x} \sim \p{t_1 X + t_2 L_A > x}, \text{ as } x \rightarrow \infty.$$ But this essentially means, reapplying Proposition~\eqref{prop:lincomb}, that $(X, L_A)$ is regularly varying.

The following bounds will be useful: 
\begin{enumerate}
    \item By a Taylor expansion, as $s \rightarrow 0^+$, \begin{equation}\label{eq:hawkesb1} s-(1-e^{-s}) \leq s^2/2. \end{equation}
    \item For $s >0$ small enough, \begin{equation}\label{eq:hawkesb2}-(1-e^{-s}) \leq -s/2.\end{equation}
\end{enumerate}

First, note that it is possible to write $\varphi_{t_1X+t_2 L_A}(\cdot)$ as a function of $\kappa_A$ instead of $L_A$. Using the Tower property and recalling that $L_A \lvert A \sim \text{Poisson}(\kappa_A)$ yields
\begin{align*}
\varphi_{t_1X+t_2 L_A}(s)&=\E{\E{e^{-st_1 X - st_2 L_A}\; \vert \; A}}  =\E{e^{-st_1 X - (1-e^{-st_2})\kappa_A}}.
\end{align*}

From this, and letting $\alpha \in (n,n+1)$, $n \geq 1$, simple derivations and collection of terms lead us to consider the difference given by
\begin{align*}
    \big\lvert \varphi_{t_1 X + t_2 L_A}^{(n+1)}(s)-\varphi_{t_1 X + t_2 \kappa_A}^{(n+1)}(s)  \big\rvert &= \bigg\lvert \E{(-t_1 X)^{n+1} \big(e^{-st_1 X -(1-e^{-st_2})\kappa_A}-e^{-st_1 X -st_2 \kappa_A}\big)} \\
    &\quad + I_1 \E{(-t_1 X)^{n}(-t_2 \kappa_A) \big(e^{-st_1 X - (1-e^{-st_2} \kappa_A)\kappa_A-I_2 st_2} - e^{-st_1 X - st_2 \kappa_A} \big) } \\
    &\quad + ... \\
    &\quad + I_{j} \E{(-t_1 X)(-t_2 \kappa_A)^{n} \big(e^{-st_1 X - (1-e^{-st_2} \kappa_A)\kappa_A-I_k st_2} - e^{-st_1 X - st_2 \kappa_A} \big) } \\
    &\quad + \E{(-t_2 \kappa_A)^{n+1} \big(e^{-st_1 X -(1-e^{-st_2})\kappa_A- (n+1) st_2}-e^{-st_1 X -st_2 \kappa_A}\big)} + C_{n+1} \bigg\rvert \\
    &\eqdef \big\lvert B_1 + B_{21} + ... + B_{2j} + B_3 + C_{n+1} \big\rvert 
\end{align*}
where the constants of product terms $(B_{21},\ldots, B_{2j})$ $I_1, I_2, \ldots, I_j, I_k \in \N$ depend on $n$. 

Consider term $B_1$. Using Equation~\eqref{eq:hawkesb1} and Equation~\eqref{eq:hawkesb2} and the basic inequality $xe^{-x} \leq e^{-1}$, one can show that
\begin{align*}
\lvert B_1 \rvert &= \E{(t_1 X)^{n+1} e^{-st_1X} \big(e^{-(1-e^{-st_2})\kappa_A}-e^{-st_2 \kappa_A}\big)} \\
&\leq \E{(t_1 X)^{n+1} e^{-st_1X} e^{-(1-e^{-st_2})\kappa_A} \kappa_A (st_2 -1 + e^{-st_2})}  \\
&\leq \E{(t_1X)^{n+1} e^{-st_1 X} e^{-st_2 \kappa_A / 2} \kappa_A (st_2)^2/2} \\
&\leq \E{(st_1Xe^{-st_1 X}) \bigg(\frac{st_2 \kappa_A}{2} e^{-st_2 \kappa_A/2}\bigg) t_2 (t_1X)^{n}} \\ 
&\leq \E{e^{-2} t_2 (t_1 X)^{n}}.
\end{align*}
and by the finiteness of the $n$th moment of $X$ when $\alpha \in (n, n+1)$, the above expectation is finite. Hence, it follows, using Karamata's Theorem~\eqref{thm:kar} and Remark~\eqref{rem:kar}, that $$B_1 = o\big(\varphi_{t_1 X + t_2 \kappa_A}^{(n+1)}(s)\big), \text{ as }s \rightarrow 0^+.$$
Consider one representative for the cross-product terms, say, without loss of generality, $B_{21}$. Then, proceeding as before for term $B_1$, using Equation~\eqref{eq:hawkesb1} and Equation~\eqref{eq:hawkesb2} and the basic inequality $xe^{-x} \leq e^{-1}$, yields 
\begin{align*}
\lvert B_{21} \rvert &= I_1\E{(t_1 X)^{n} (t_2 \kappa_A) e^{-st_1 X}\big((e^{-(1-e^{-st_2})\kappa_A -st_2}-e^{-st_2 \kappa_A -st_1}) - (e^{-st_2 \kappa_A}-e^{-st_2 \kappa_A-st_1})\big)} \\
&\leq I_1 \E{(t_1 X)^{n}e^{-st_1 X} (t_2 \kappa_A^2) e^{-(1-e^{-st_2})\kappa_A}(st_2 - 1 + e^{-st_2})} \\
&\leq I_1 \E{(t_1 X)^{n}e^{-st_1 X} (t_2 \kappa_A^2) e^{-st_2 \kappa_A / 2} \frac{(st_2)^2}{2}} \\
&\leq I_1\E{(st_1 X e^{-st_1X}) \bigg(\frac{st_2 \kappa_A}{2}e^{-st_2 \kappa_A/2}\bigg) (t_1 X)^{n-1}  t_2^2 \kappa_A} \\
&\leq I_1 \E{e^{-2} (t_1 X)^{n-1} t_2^2 \kappa_A}. 
\end{align*}
Using H\"{o}lder's inequality, because the order of the product of $X^{n-1}$ and $\kappa_A$ is $n$, one obtains that the above expectation is finite. It follows from Karamata's Theorem~\eqref{thm:kar} and Remark~\eqref{rem:kar}, that 
$$B_{21} = o\big(\varphi_{t_1 X + t_2 \kappa_A}^{(n+1)}(s)\big), \text{ as }s \rightarrow 0^+,$$ and similarly for each cross product term $B_{22}, \ldots, B_{2j}$.  

Consider now $B_3$. With similar tools as before, using Equation~\eqref{eq:hawkesb1} and Equation~\eqref{eq:hawkesb2} and the basic inequality $x^2e^{-x} \leq 4 e^{-2}$, yields
\begin{align*}
\lvert B_3 \rvert &= \E{(t_2 \kappa_A)^{n+1} e^{-st_1 X} \big((e^{-(1-e^{-st_2})\kappa_A-2st_2}-e^{st_2 \kappa_A - 2st_2})-(e^{-st_2 \kappa_A}-e^{-st_2 \kappa_A -2st_2})\big)} \\
&\leq \E{(t_2 \kappa_A)^{n+1} e^{-st_1 X} (e^{-(1-e^{-st_2})\kappa_A-2st_2}-e^{st_2 \kappa_A - 2st_2})} \\
&\leq \E{(t_2\kappa_A)^{n+1} e^{-(1-e^{-st_2})\kappa_A} (st_2 \kappa_A -(1-e^{-st_2})\kappa_A)}  \\
&\leq 2\E{t_2^{n+1} \kappa_A^{n+2} (st_2/2)^2e^{-st_2 \kappa_A/2} }\\
&\leq 2\E{(t_2 \kappa_A)^n 4e^{-2}},
\end{align*}
which essentially shows once again, using Karamata's Theorem~\eqref{thm:kar} and Remark~\eqref{rem:kar}, that 
$$B_{3} = o\big(\varphi_{t_1 X + t_2 \kappa_A}^{(n+1)}(s)\big), \text{ as }s \rightarrow 0^+.$$

Lastly, making up the remainder $C_{n+1}$ are terms of strictly smaller order than $n+1$. These are finite and trivially, using Karamata's Theorem~\eqref{thm:kar} and Remark~\eqref{rem:kar},  $$C_{n+1} = o\big(\varphi_{t_1 X + t_2 \kappa_A}^{(n+1)}(s)\big), \text{ as }s \rightarrow 0^+.$$ 

Collecting all of the above results, it follows that 
$$\big\lvert \varphi_{t_1 X + t_2 L_A}^{(n+1)}(s)-\varphi_{t_1 X + t_2 \kappa_A}^{(n+1)}(s) \big\rvert = o(\varphi_{t_1 X + t_2 \kappa_A}^{(n+1)}(s)), \text{ as } s \rightarrow 0^+$$ which essentially means that, for all $n \in \N$ $$\varphi_{t_1 X + t_2 L_A}^{(n+1)}(s) \sim \varphi_{t_1 X + t_2 \kappa_A}^{(n+1)}(s), \text{ as } s \rightarrow 0^+.$$

Now, by Karamata's Theorem~\eqref{thm:kar}, this means that, for all $t_1, t_2 \in \R_+$ $$\p{t_1 X + t_2 L_A > x} \sim  \p{t_1 X + t_2 \kappa_A > x}, \text{ as } x \rightarrow \infty, $$ and using Proposition~\eqref{prop:lincomb}, this means that $(X, L_A)$ is regularly varying with index $\alpha>1$. We conclude by applying Theorem 1 in \cite{af18} which yields the desired result.
\end{proof}

\begin{rem}
    Proposition~\eqref{prop:sum-hawkes-trsf} is essentially about showing that if $(X, \kappa_A)$ is regularly varying, then $(X, L_A)$ is also regularly varying, furthermore with the same index $\alpha>1$. The equivalence between the regularly varying property of $\kappa_A$ and that of $L_A$ is easy to prove and is to be found, for example, in \cite{lsv07}. The crucial step to obtain the tail asymptotic of $D^{H}$ and its regularly varying property in Proposition~\eqref{prop:sum-hawkes-trsf} relies on Theorem 1 in \cite{af18}. In their even more general setting, the distribution of $X+c L_A$ is intermediate regularly varying, for all $c \in (\E{D^H} - \epsilon, \E{D^H} + \epsilon)$ for some $\epsilon>0$: this assumption encompasses the case where $(X, L_A)$ is regularly varying, but also the cases where $X$ (respectively $L_A$) is intermediate regularly varying and $L_A$ (respectively $X$) is lighter, in the sense that $\p{L_A > x} = o(\p{X > x}), \text{ as } x \rightarrow \infty$ (respectively $\p{X > x} = o(\p{L_A > x}), \text{ as } x \rightarrow \infty$). 

    Proposition~\eqref{prop:sum-hawkes-trsf} extends Lemma 5.2 in \cite{bwz19} by letting $(X, \kappa_A)$ be regularly varying, while it is shown in the aforementioned paper that $D^H$ is regularly varying in the case $X$ is itself regularly varying and with noninteger $\alpha \in (0,2)$. In the aforementioned paper, three cases are distinguished, with various assumptions on the relation between $X$ and $L_A$. Note that we do not cover the case $\alpha \in (0,1)$ in Proposition~\eqref{prop:sum-hawkes-trsf}, which is studied in \cite{bwz19}. 

    
    In a recent contribution concerning PageRank, Theorem 4.2 in \cite{oc21} provides similar asymptotics as in Theorem 4.2, that can be specialised to our case when $X$ and $K_A$ are allowed to have any form of dependence but one has a negligible tail with respect to the other. The aforementioned theorem also applies to intermediate regularly varying $X$ and $K_A$. The main connection and specialisation is the following one:
    \begin{enumerate}
        \item if $K_A$ is regularly varying with index $\alpha > 1$ and  $\E{X^{\alpha+\epsilon}} < \infty$ for some $\epsilon > 0$, and if $\p{X > x} = o(\p{K_A > x}), \text{ as } x \rightarrow \infty,$ then $$\p{D^H > x} \sim \E{K_A} \p{X > x} + \p{\E{X} K_A > x}, \text{ as } x \rightarrow \infty;$$
        \item if $X$ is regularly varying with index $\alpha > 1$ and  $\E{K_A^{\alpha+\epsilon}} < \infty$ for some $\epsilon > 0$, and if $\p{K_A > x} = o(\p{X > x}), \text{ as } x \rightarrow \infty,$ then $$\p{D^H > x} \sim (1+\E{K_A}) \p{X > x}, \text{ as } x \rightarrow \infty.$$
     \end{enumerate}
     Hence, our result essentially extends the above, allowing for tail equivalence between $X$ and $K_A$.
    
\end{rem} 

\section{Precise large deviations of cluster process functionals}

In this section, we make use of the cluster asymptotics from Section~4 to Section~7 to derive (precise) large deviation results for the renewal Poisson cluster process as well as for the Hawkes process. 

Notation wise, we let $$N_T = \big\lvert \{(i,j): 0 \leq \Gamma_i \leq T, 0 \leq \Gamma_i + T_{ij} \leq T\}\big\rvert$$ represent the number of events occurring in the time interval $[0,T]$, for $T >0,$ and we let $$J_T = \big\lvert\{(i,j): 0 \leq \Gamma_i \leq T, T \leq \Gamma_i + T_{ij}\}\big\rvert$$ represent the number of (ordered) events coming from clusters that started in the time interval $[0,T]$, but occurring after time $T>0$. We will also need the following decomposition of the maximum: for $x> 0$
\begin{equation}\label{eq:max-decomp}
   \bigg\{\max_{1 \leq i \leq C_T} H_i - \max_{1 \leq j \leq J_T} X_j > x \bigg\} \subseteq \bigg\{\max_{1 \leq i \leq N_T} X_i > x\bigg\} \subseteq \bigg\{\max_{1 \leq i \leq C_T} H_i > x\bigg\}
\end{equation}
where $C_T \sim \text{Poisson}(\nu T)$ is the number of clusters starting in the interval $[0, T]$, for $T>0$, and $H_i$ is as in Equation~\eqref{eq:hi}. This is due to the fact that the immigration process is the classical homogeneous Poisson process with parameter $\nu >0$, see Section 2. The upper bounding set in decompositions~\eqref{eq:max-decomp} overshoots by taking the maximum over all the events belonging to clusters initiated before time $T >0$, i.e. this includes events occurring after time $T>0$. This is convenient, since $C_T$ and $H$ are independent. 

The precise large deviation results for the sum will necessitate another decomposition. Notation wise, rewriting Equation~\eqref{eq:st} using $N_T$ yields: $$S_T \defeq \sum_{j=1}^{N_T} X_j,$$ and we let $\mu_{S_T}$ denote the expectation of $S_T$. Then we can decompose the deviation as: 
\begin{align}\label{eq:sum-decomp} S_T - \mu_{S_T} &= \sum_{i=1}^{C_T} D_i - \E{\sum_{i=1}^{C_T} D_i} -\bigg(\sum_{j=1}^{J_T} f(A_j)-\E{\sum_{j=1}^{J_T} f(A_j)}\bigg) \nonumber \\
 	 &\eqdef \sum_{i=1}^{C_T} D_i - \E{\sum_{i=1}^{C_T} D_i} -\big(\varepsilon_T -\E{\varepsilon_T}\big).\end{align}
As in decomposition~\eqref{eq:max-decomp}, the first difference overshoots by summing marks of all events belonging to clusters started before $T>0$, and removing the left-over effect of events occurring after time $T>0$ in a second step, denoted by $\varepsilon_T$. Again, note that $C_T$ and $D$ are independent. 

Furthermore, regarding the left-over effect, the following properties hold: 
\begin{enumerate}
    \item (Property 1) in \cite{bmz22}, for both the renewal Poisson cluster process and the Hawkes process, that $\E{J_T} = o(T), \text{ as } T \rightarrow \infty;$
    \item (Property 2) in \cite{bwz19}, for both the renewal Poisson cluster process and the Hawkes process, that $\E{\varepsilon_T} = o(\sqrt{T}), \text{ as } T \rightarrow \infty;$ and hence, in our settings, the condition  $\E{\varepsilon_T} = o(T), \text{ as } T \rightarrow \infty$ holds as well. 
\end{enumerate}

\subsection{Large deviations of maxima over an interval $[0,T]$}

We now illustrate how the asymptotics of Proposition~\eqref{prop:max-rp-trsf} and Proposition~\eqref{prop:max-hawkes-trsf} help to determine the asymptotic behaviour of the whole processes on an interval. In what follows, we let $H$ denote a generic maximum, i.e. it can either be $H^R$ or $H^D$ from Section~4 and Section~6. At the end of the section, we present some related work. 

\begin{prop}\label{prop:maxh}
    Suppose that the conditions of either Proposition~\eqref{prop:max-rp-trsf} or those of Proposition~\eqref{prop:max-hawkes-trsf} hold. Then, as $T \rightarrow \infty$, and for any $\gamma > 0$ $$\lim_{T \rightarrow \infty} \sup_{x \geq \gamma \nu T} \bigg\lvert \frac{\p{\max_{1 \leq i \leq N_T} X_i > x}}{\E{N_T} \p{X>x} }-1 \bigg\rvert = 0.$$
\end{prop}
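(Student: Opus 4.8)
\medskip\noindent\emph{Proof plan.} The idea is to use the sandwich~\eqref{eq:max-decomp} to trade the intractable event $\{\max_{1\leq i\leq N_T}X_i>x\}$ for events built from the i.i.d.\ cluster maxima $(H_i)$ indexed by the \emph{Poisson} number $C_T\sim\text{Poisson}(\nu T)$ of clusters, and then to invoke the precise large deviation theorem for random maxima of (extended) regularly varying summands from \cite{km99}. That theorem applies here because $C_T$ is a homogeneous Poisson process evaluated at time $T$, so $C_T/(\nu T)\to 1$ almost surely, and because $(H_i)$ are i.i.d.\ with a regularly varying tail.

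First I would gather the three ingredients. By Proposition~\eqref{prop:max-rp-trsf} (respectively Proposition~\eqref{prop:max-hawkes-trsf}) one has $\p{H>x}\sim c\,\p{X>x}$ as $x\to\infty$, with $c=1+\E{K_A}$ (respectively $c=(1-\E{\kappa_A})^{-1}$); in particular $H$ is regularly varying with index $\alpha>1$, since $X$ is by Example~\eqref{ex:proj}. The \emph{same} constant governs the mean of $N_T$: writing $N_T+J_T$ for the total number of points of the clusters with immigration time in $[0,T]$, a Campbell/Wald computation gives $\E{N_T+J_T}=c\,\nu T$, and Property~1 from \cite{bmz22} gives $\E{J_T}=o(T)$, whence $\E{N_T}\sim c\,\nu T$. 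Finally, \cite{km99} yields, for every $\gamma'>0$,
\[
\lim_{T\to\infty}\ \sup_{y\geq\gamma'\nu T}\Bigl|\frac{\p{\max_{1\leq i\leq C_T}H_i>y}}{\nu T\,\p{H>y}}-1\Bigr|=0 .
\]

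For the upper bound I would use the right inclusion in~\eqref{eq:max-decomp}, $\p{\max_{1\leq i\leq N_T}X_i>x}\leq\p{\max_{1\leq i\leq C_T}H_i>x}$, and combine the display above with $\p{H>x}\sim c\,\p{X>x}$ and $\E{N_T}\sim c\,\nu T$ to get $\limsup_{T}\sup_{x\geq\gamma\nu T}\p{\max_{1\leq i\leq N_T}X_i>x}/(\E{N_T}\p{X>x})\leq 1$. For the lower bound I would use the left inclusion together with the elementary estimate $\p{A-B>x}\geq\p{A>(1+\epsilon)x}-\p{B>\epsilon x}$ with $A=\max_{1\leq i\leq C_T}H_i$, $B=\max_{1\leq j\leq J_T}X_j$. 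The $A$-term is $\sim c\,\nu T\,\p{X>(1+\epsilon)x}$ uniformly on $x\geq\gamma\nu T$, and the uniform convergence theorem for regularly varying functions (see \cite{bgt89}) gives $\p{X>(1+\epsilon)x}/\p{X>x}\to(1+\epsilon)^{-\alpha}$ uniformly there. The $B$-term is controlled by a Campbell-type bound: every event of a cluster started in $[0,T]$ that occurs after $T$ is an offspring event, whose mark is independent of the branching and displacement variables determining $J_T$, so $\p{\max_{1\leq j\leq J_T}X_j>\epsilon x}\leq\E{J_T}\,\p{X>\epsilon x}$; since $\E{J_T}=o(T)=o(\E{N_T})$ and $\p{X>\epsilon x}/\p{X>x}\to\epsilon^{-\alpha}$ uniformly, this term is $o(\E{N_T}\p{X>x})$ uniformly on $x\geq\gamma\nu T$. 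Dividing by $\E{N_T}\p{X>x}$ and letting $\epsilon\downarrow 0$ gives $\liminf_{T}\inf_{x\geq\gamma\nu T}(\cdots)\geq 1$, which together with the upper bound closes the argument.

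The main obstacle is the uniform-in-$x$ control of the left-over maximum $\max_{1\leq j\leq J_T}X_j$: a crude bound such as $\p{\cdot}\leq 1$ is useless, because $\E{N_T}\p{X>\gamma\nu T}\to 0$, so one is forced into the Campbell identity above. This in turn requires a careful use of the independence assumptions on the mark sequence — and, for the Hawkes process, of the fact that offspring counts are drawn before the offspring marks — to certify that $\max_{1\leq j\leq J_T}X_j$ genuinely decouples from $J_T$; the Potter bounds are then what legitimize passing the ratios $\p{X>\lambda x}/\p{X>x}\to\lambda^{-\alpha}$ uniformly through the moving range $x\geq\gamma\nu T$.
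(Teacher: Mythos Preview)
Your proposal is correct and follows essentially the same route as the paper: sandwich via~\eqref{eq:max-decomp}, apply the \cite{km99} precise large deviation result for the i.i.d.\ cluster maxima indexed by the Poisson count $C_T$, use the cluster-tail asymptotics $\p{H>x}\sim c\,\p{X>x}$ together with $\E{N_T}\sim c\,\nu T$ to identify the denominator, and kill the left-over term by the union bound $\p{\max_{j\leq J_T}X_j>\epsilon x}\leq\E{J_T}\p{X>\epsilon x}$ combined with Property~1. The only cosmetic differences are that the paper records $\E{N_T}$ directly in Remark~\eqref{rem:nT} rather than deriving it via Campbell/Wald, and that it writes out the conditioning on $\{J_T=k\}$ explicitly where you invoke the elementary inequality $\p{A-B>x}\geq\p{A>(1+\epsilon)x}-\p{B>\epsilon x}$.
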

\begin{proof}[Proof of Proposition~\eqref{prop:maxh}]
    Using decomposition~\eqref{eq:max-decomp}
\begin{align*}
	\frac{\p{\max_{1 \leq i \leq C_T} H_i - \max_{1 \leq j \leq J_T} X_i > x}}{\E{N_T} \p{X>x}} \leq \frac{\p{\max_{1 \leq i \leq N_T} X_i > x}}{\E{N_T} \p{X > x}}
	\leq \frac{\p{\max_{1 \leq i \leq C_T} H_i > x }}{\E{N_T} \p{X>x}} .
\end{align*}  
\textit{Upper bound:}
 By the remark following Theorem 3.1 in \cite{km99} for any $\gamma > 0$, $$\lim_{T \rightarrow \infty} \sup_{x \geq \gamma \nu T} \bigg\lvert \frac{\p{\max_{1 \leq i \leq C_T} H_i > x}}{\E{C_T} \p{H > x}} - 1 \bigg\rvert = 0, \text{ as } T \rightarrow \infty. $$
Using the asymptotics of Proposition~\eqref{prop:max-rp-trsf} and of Proposition~\eqref{prop:max-hawkes-trsf}, $$\E{C_T} \p{H > x} \sim \E{N_T} \p{X > x}, \text{ as } T \rightarrow \infty$$ for the $x$-values considered, i.e. when $x \geq \gamma \nu T$ for any $\gamma >0$.

\textit{Lower bound:}
\begin{align*}
    \frac{\p{\max_{1 \leq i \leq C_T} H_i - \max_{1 \leq j \leq J_T} X_j > x}}{\E{N_T} \p{X>x}}  &= \frac{\p{\max_{1 \leq i \leq C_T} H_i - \max_{1 \leq j \leq J_T} X_j > x, \max_{1 \leq j \leq J_T} X_j \leq  x\varepsilon}}{\E{N_T} \p{X>x}} \\
    &\quad + \frac{\p{\max_{1 \leq i \leq C_T} H_i - \max_{1 \leq j \leq J_T} X_j > x, \max_{1 \leq j \leq J_T} X_j >  x\varepsilon}}{\E{N_T} \p{X>x}}\\ 
        &\geq \frac{\p{\max_{1 \leq i \leq C_T} H_i   > x(1+\varepsilon), \max_{1 \leq j \leq J_T} X_j \leq  x\varepsilon}}{\E{N_T} \p{X>x}} \\
    &\geq \frac{\p{\max_{1 \leq i \leq C_T} H_i > x(1+\varepsilon)}}{\E{N_T} \p{X>x}} \\
    &\quad  - \frac{\p{\max_{1 \leq i \leq C_T} H_i  > x(1+\varepsilon),  \max_{1 \leq j \leq J_T} X_j > x\varepsilon}}{\E{N_T} \p{X>x}}.
\end{align*} 
The very last term in the lower bound is bounded above by
\begin{align*}
    \frac{\p{\max_{1 \leq i \leq C_T} H_i  > x(1+\varepsilon), \max_{1 \leq j \leq J_T} X_j > x\varepsilon}}{\E{N_T} \p{X>x}} &\leq \frac{\p{\max_{1 \leq j \leq J_T} X_j > x\varepsilon}}{\E{N_T} \p{X>x}}.
\end{align*}
Conditioning on the values of $J_T$, using a union bound and the fact that the $X_j$s are independent, 
\begin{align*}
    \p{\max_{1 \leq j \leq J_T} X_j > x \varepsilon} &= \sum_{k=1}^{\infty} \p{\max_{1 \leq j \leq k} X_j > x \varepsilon} \p{J_T = k} \\ 
    &\leq \sum_{k=1}^{\infty} \sum_{j=1}^k \p{X_j > x \varepsilon} \p{J_T=k} \\
    &\leq \sum_{k=1}^{\infty} k \p{X>x\varepsilon} \p{J_T=k} \\
    &\leq \E{J_T} \p{X>x\varepsilon}.
\end{align*}
Using Property (1) above, and Remark~\eqref{rem:nT}, which essentially says that $\E{N_T} = \cO(T), \text{ as } T \rightarrow \infty$, and under the assumption that $x \geq \gamma \nu T$ for every $\gamma>0$, it holds that $T \p{X > x} \rightarrow 0$ as $T \rightarrow \infty$, and it follows that, for any fixed $\epsilon >0$,
 $$\frac{\p{\max_{1 \leq j \leq J_T} X_j > x\varepsilon}}{\E{N_T} \p{X>x}} = o(1), \text{ as } T \rightarrow \infty.$$ This implies that 
 \begin{align*}\frac{\p{\max_{1 \leq i \leq C_T} H_i - \max_{1 \leq j \leq J_T} X_j > x}}{\E{N_T} \p{X>x}} &\geq \frac{\p{\max_{1 \leq i \leq C_T} H_i > x(1+\varepsilon)}}{\E{N_T} \p{X>x}}. \end{align*}
Using again the remark following Theorem 3.1 in \cite{km99}, it follows, for any $x \geq \gamma \nu T$, that
$$\lim_{T \rightarrow \infty} \sup_{x \geq \gamma \nu T} \bigg\lvert \frac{\p{\max_{1 \leq i \leq C_T} H_i > x(1+\varepsilon)}}{\E{C_T} \p{H > x(1+\varepsilon)}} - 1 \bigg\rvert = 0, \text{ as } T \rightarrow \infty. $$
Because $H$ is regularly varying with index $\alpha>1$, it follows that $$\E{C_T} \p{H > x(1+\varepsilon)} = (1+\varepsilon)^{-\alpha} \E{C_T}\p{H > x}, \text{ as } x \rightarrow \infty,$$ and using the asymptotics of Proposition~\eqref{prop:max-rp-trsf} and of Proposition~\eqref{prop:max-hawkes-trsf}, $$\E{C_T}\p{H > x(1+\varepsilon)} \sim (1+\varepsilon)^{-\alpha}\E{N_T}\p{X > x}, \text{ as } T \rightarrow \infty.$$ 

Letting $\epsilon \rightarrow 0$, collecting the upper and lower bounds yields the desired result. 
\end{proof}

\begin{rem}\label{rem:nT}
    Note that, by the independence of the clusters, we have: 
    \begin{enumerate}
        \item for the renewal Poisson cluster process, $\E{N_T}=(\E{K_A}+1) \nu T$;
        \item for the Hawkes process, $\E{N_T}=\frac{\nu T}{1-\E{\kappa_A}}$ (see e.g. Section 12.1 in \cite{bremaud20}). 
    \end{enumerate}
    \end{rem}

\subsection{Large deviations of sums over an interval \([0,T]\)}

We finally illustrate how the results of Proposition~\eqref{prop:sum-rp-trsf} and Proposition~\eqref{prop:sum-hawkes-trsf} help to derive results for the mixed binomial Poisson cluster process as well as for the Hawkes on an interval $[0,T]$. Note that $D$ denotes a generic sum of the marks. 

\begin{prop}\label{prop:sumh}
Suppose $\lim_{T \rightarrow \infty} \sup_{x \geq \gamma \nu T} \frac{\p{\varepsilon_T - \E{\varepsilon_T} > x}}{\nu T \p{D > x}} = 0 $ for both the mixed binomial Poisson cluster process and the Hawkes process. 
   \begin{enumerate}
       \item Suppose the conditions of Proposition~\eqref{prop:sum-rp-trsf} hold for the mixed binomial Poisson cluster process. Then, as $T \rightarrow \infty$, for all $\gamma>0$,
       $$\lim_{T \rightarrow \infty} \sup_{x \geq \gamma\nu T} \bigg\lvert \frac{\p{S_T-\mu_{S_T} > x}}{\nu T\big(\p{X+\E{X}K_A>x} + \E{K_A}\p{X>x}\big)}-1 \bigg\rvert = 0.$$
       \item Suppose the conditions of Proposition~\eqref{prop:sum-hawkes-trsf} hold. Then, as $T \rightarrow \infty$, for all $\gamma>0$, $$\lim_{T \rightarrow \infty} \sup_{x \geq \gamma\nu T} \bigg\lvert \frac{\p{S_T-\mu_{S_T} > x}}{\frac{\nu T}{1-\E{\kappa_A}} \p{X+\bigg(\frac{\E{X}}{1-\E{\kappa_A}}\bigg)\kappa_A>x} }-1 \bigg\rvert = 0.$$
   \end{enumerate}
\end{prop}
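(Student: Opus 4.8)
The plan is to prove both parts in parallel, reusing almost verbatim the structure of the proof of Proposition~\eqref{prop:maxh}, the only new ingredient being the cluster-level tail of $D$ from Proposition~\eqref{prop:sum-rp-trsf} (renewal case) and Proposition~\eqref{prop:sum-hawkes-trsf} (Hawkes case) in place of the tail of $H$. Recall the decomposition~\eqref{eq:sum-decomp},
$$S_T-\mu_{S_T}=\bigg(\sum_{i=1}^{C_T}D_i-\E{\sum_{i=1}^{C_T}D_i}\bigg)-\big(\varepsilon_T-\E{\varepsilon_T}\big),$$
where $C_T\sim\text{Poisson}(\nu T)$ is independent of the i.i.d.\ sequence $(D_i)$ of generic cluster sums, and $\varepsilon_T=\sum_{j=1}^{J_T}f(A_j)\geq 0$ is the left-over term. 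Under the standing assumption $(X,K_A)$, resp.\ $(X,\kappa_A)$, is regularly varying with index $\alpha>1$, so Proposition~\eqref{prop:sum-rp-trsf}, resp.\ Proposition~\eqref{prop:sum-hawkes-trsf}, guarantees that $D$ is regularly varying with index $\alpha>1$ and, in particular, $\E{D}<\infty$. Hence Theorem 3.1 in \cite{km99} (which covers the Poisson counting variable $C_T$, since $\E{C_T}=\nu T\to\infty$) gives, for every $\gamma>0$,
$$\lim_{T\rightarrow\infty}\sup_{x\geq\gamma\nu T}\bigg\lvert\frac{\p{\sum_{i=1}^{C_T}D_i-\E{\sum_{i=1}^{C_T}D_i}>x}}{\nu T\,\p{D>x}}-1\bigg\rvert=0,$$
and rewriting $\nu T\,\p{D>x}$ through the above tail asymptotics — and, for the Hawkes case, absorbing the factor $1/(1-\E{\kappa_A})$ into $\E{N_T}=\nu T/(1-\E{\kappa_A})$ via Remark~\eqref{rem:nT} — produces exactly the denominators appearing in the statement.

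For the upper bound, nonnegativity of $\varepsilon_T$ yields $\{S_T-\mu_{S_T}>x\}\subseteq\{\sum_{i=1}^{C_T}D_i-\E{\sum_{i=1}^{C_T}D_i}>x-\E{\varepsilon_T}\}$; by Property (2) we have $\E{\varepsilon_T}=o(T)=o(x)$ uniformly on $x\geq\gamma\nu T$, hence $x-\E{\varepsilon_T}\geq(1-\delta)x$ for $T$ large, and applying the displayed estimate at level $(1-\delta)x$ together with the regular variation $\p{D>(1-\delta)x}\sim(1-\delta)^{-\alpha}\p{D>x}$ bounds the relevant ratio above by $(1-\delta)^{-\alpha}(1+o(1))$. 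Letting $\delta\downarrow 0$ closes the upper bound.

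For the lower bound I would fix $\delta\in(0,1)$ and use
$$\p{S_T-\mu_{S_T}>x}\geq\p{\sum_{i=1}^{C_T}D_i-\E{\sum_{i=1}^{C_T}D_i}>(1+\delta)x}-\p{\varepsilon_T>\delta x},$$
the first term being handled exactly as in the upper bound (now picking up a factor $(1+\delta)^{-\alpha}$), so that the whole matter reduces to proving $\p{\varepsilon_T>\delta x}=o\big(\nu T\,\p{D>x}\big)$ uniformly in $x\geq\gamma\nu T$. For this I would split along a single big jump: on $\{\varepsilon_T>\delta x\}$, either one of the $J_T$ left-over offspring marks exceeds $\delta x/2$, or the sum of the left-over marks truncated at $\delta x/2$ exceeds $\delta x/2$. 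Since the offspring marks are i.i.d.\ and independent of the timing/branching data that decides which of them are left over, a Wald-type identity bounds the first alternative by $\E{J_T}\,\p{X>\delta x/2}$, which by Property (1) ($\E{J_T}=o(T)$), Remark~\eqref{rem:nT} and uniform convergence of regularly varying tails is $o\big(\nu T\,\p{D>x}\big)$ uniformly — the same computation as the $J_T$-term in the proof of Proposition~\eqref{prop:maxh}. The truncated alternative must also be shown negligible uniformly; here a bare first-moment bound is of order $\E{\varepsilon_T}/x$, which is \emph{not} $o\big(\nu T\,\p{D>x}\big)$ uniformly, since $x\,\p{D>x}\to 0$, so one must instead exploit that a sum truncated at $\delta x/2$ has a super-polynomially small tail at level $\delta x/2$ once $J_T$ is not atypically large, controlling $J_T$ through $\E{J_T}=o(T)$ and $\E{\varepsilon_T}=o(T)$; equivalently, as $\E{J_T}=o(T)\ll\gamma\nu T$, the left-over random sum $\varepsilon_T$ itself sits in the large deviation regime of \cite{km99} at level $\delta x$, and one may also simply quote the left-over estimates already established in \cite{bwz19} and \cite{bmz22}. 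With both alternatives negligible, letting $\delta\downarrow 0$ and combining with the upper bound completes the proof.

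The main obstacle is exactly this last point: obtaining a bound on $\p{\varepsilon_T>\delta x}$ that is uniform over the whole range $x\geq\gamma\nu T$, rather than merely pointwise as $T\to\infty$. Once that is in hand, everything else is a mechanical recombination of \cite{km99} with the cluster-level tails of Section~5 and Section~7 and with the left-over moment estimates recalled before the statement.
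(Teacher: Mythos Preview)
Your overall strategy coincides with the paper's: use decomposition~\eqref{eq:sum-decomp}, apply Theorem~3.1 of \cite{km99} to the compound-Poisson main term $\sum_{i=1}^{C_T}D_i-\E{\sum_{i=1}^{C_T}D_i}$ (exploiting that $C_T$ is Poisson and independent of the $D_i$), plug in the cluster-level tails from Proposition~\eqref{prop:sum-rp-trsf} or Proposition~\eqref{prop:sum-hawkes-trsf}, and control the leftover $\varepsilon_T$ via Properties~(1) and~(2). The upper bounds match: the paper phrases the shift $x\mapsto x-\E{\varepsilon_T}$ through Potter bounds rather than your $(1-\delta)x$ threshold, but the content is identical.

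The one genuine difference is the treatment of the leftover in the lower bound. The paper does \emph{not} use your big-jump split on $\varepsilon_T$; it simply applies Markov's inequality,
$$\p{\varepsilon_T-\E{\varepsilon_T}>x\delta}\leq\frac{\E{\varepsilon_T}}{\gamma'\nu T\delta+(\delta+1)\E{\varepsilon_T}}\xrightarrow[T\to\infty]{}0,$$
and treats this $o(1)$ bound as sufficient. In other words, the paper does not engage with the uniformity issue you flag---that one actually needs $\p{\varepsilon_T>\delta x}=o\big(\nu T\,\p{D>x}\big)$ uniformly over $x\geq\gamma\nu T$, while the Markov bound only delivers $o(1)$ and $\nu T\,\p{D>x}\to0$ already at $x=\gamma\nu T$ since $\alpha>1$. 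Your caution is well placed, and your proposed remedy (union bound for a single left-over mark exceeding $\delta x/2$, giving $\E{J_T}\p{X>\delta x/2}=o(T)\cdot\cO(\p{D>x})$, plus a Fuk--Nagaev-type bound on the truncated remainder, or alternatively a direct appeal to the leftover estimates in \cite{bwz19} and \cite{bmz22}) is the natural way to make this step rigorous. So the ``main obstacle'' you single out is one that the paper's own argument does not explicitly resolve; your proposal is at least as complete as the paper's on this point and more honest about where the difficulty sits.
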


\begin{proof}[Proof of Proposition~\eqref{prop:sumh}]

We use decomposition~\eqref{eq:sum-decomp}, i.e. 
$$\p{S_T-\mu_{S_T} > x} = \p{\sum_{i=1}^{C_T} D_i - \E{\sum_{i=1}^{C_T} D_i} -\big(\varepsilon_T -\E{\varepsilon_T}\big) > x}.$$

\textit{Upper bound:}
Note that
\begin{align*}
	\p{S_T-\mu_{S_T} > x} &\leq \p{\sum_{i=1}^{C_T} D_i - \E{\sum_{i=1}^{C_T} D_i}  > x - \E{\varepsilon_T}}. 
\end{align*}

As $T \rightarrow \infty$, we can rewrite $x \geq \gamma \nu T$ as $x \geq \gamma^{'} \nu T + \E{\varepsilon_T}$, for some $0 < \gamma^{'} < \gamma$. Hence, under the assumption that $x \geq \gamma^{'} \nu T + \E{\varepsilon_T}$, then $x-\E{\varepsilon_T} \geq  \gamma^{'} \nu T$, and since $C_T \sim \text{Poisson}(\nu T)$ is independent of $D$, using Lemma 2.1 and Theorem 3.1 in \cite{km99} yields
$$\lim_{T \rightarrow \infty} \sup_{x \geq \gamma^{'} \nu T} \bigg\lvert \frac{\p{\sum_{i=1}^{C_T} D_i - \E{\sum_{i=1}^{C_T} D_i}  > x - \E{\varepsilon_T}}}{\nu T \p{D > x - \E{\varepsilon_T}}} -1 \bigg\rvert = 0.$$

Recall that $D$ is regularly varying with index $\alpha>1$. Using Property (2) above, we can write $x-\E{\varepsilon_T} = x - o(T)$ as $T \rightarrow \infty$. Using the Potter bounds (see Theorem 1.5.6 in \cite{bgt89}), for all $I > 1$, $\eta > 0$, there exists $X$ such that, for all $x-o(T) \geq X$, 
\begin{align*}
	\frac{\p{D > x-o(T)}}{\p{D>x}} &\leq I \max \bigg\{\bigg(1-\frac{o(T)}{x}\bigg)^{-\alpha + \eta},\bigg(1-\frac{o(T)}{x}\bigg)^{-\alpha + \eta}\bigg\}.
\end{align*}

Because $x \geq \gamma \nu T + \E{\varepsilon_T}$, the above upper bound becomes uniformly close to 1, as $T \rightarrow \infty$. In combination with the above, it follows that, as $T \rightarrow \infty$, uniformly for $x \geq \gamma^{'} \nu T + \E{\varepsilon_T}$,
$$\p{S_T-\mu_{S_T} > x} \leq \nu T \p{D > x}.$$ 

\textit{Lower bound:}
Let $\delta >0$, and note that 
\begin{align*}
	\p{S_T-\mu_{S_T} > x} &= \p{\sum_{i=1}^{C_T} D_i - \E{\sum_{i=1}^{C_T} D_i} -\big(\varepsilon_T -\E{\varepsilon_T}\big) > x, \hspace{0.15cm} \varepsilon_T - \E{\varepsilon_T} \leq x \delta} \\
	&\quad +\p{\sum_{i=1}^{C_T} D_i - \E{\sum_{i=1}^{C_T} D_i} -\big(\varepsilon_T -\E{\varepsilon_T}\big) > x, \hspace{0.15cm} \varepsilon_T - \E{\varepsilon_T} > x \delta} \\
	&\geq \p{\sum_{i=1}^{C_T} D_i - \E{\sum_{i=1}^{C_T} D_i} -\big(\varepsilon_T -\E{\varepsilon_T}\big) > x, \hspace{0.15cm} \varepsilon_T - \E{\varepsilon_T} \leq x \delta} \\
	&\geq \p{\sum_{i=1}^{C_T} D_i - \E{\sum_{i=1}^{C_T} D_i} > x(1+\delta)} \\
	&\quad - \p{\sum_{i=1}^{C_T} D_i - \E{\sum_{i=1}^{C_T} D_i} -\big(\varepsilon_T -\E{\varepsilon_T}\big) > x, \hspace{0.15cm} \varepsilon_T - \E{\varepsilon_T} > x \delta} \\
	&\geq \p{\sum_{i=1}^{C_T} D_i - \E{\sum_{i=1}^{C_T} D_i} > x(1+\delta)} - \p{\varepsilon_T - \E{\varepsilon_T} > x \delta}.
\end{align*}    

By assumption, the second term is (uniformly) negligible with respect to $\nu T \p{D > x}$ for the $x$-region considered. 

Since $x \geq \gamma^{'} \nu T + \E{\varepsilon_T} \geq \gamma \nu T$, using again Theorem 3.1 in \cite{km99}, it follows that
$$\lim_{T \rightarrow \infty} \sup_{x \geq \gamma \nu T} \bigg\lvert \frac{\p{\sum_{i=1}^{C_T} D_i - \E{\sum_{i=1}^{C_T} D_i}  > x(1+\delta)}}{\nu T \p{D > x(1+\delta)}} -1 \bigg\rvert = 0.$$

Since $D$ is regularly varying with index $\alpha>1$, letting $\delta \rightarrow 0$ yields $$\nu T \p{D > x(1+\delta)} \sim \nu T(1+\delta)^{-\alpha} \p{D > x} \sim \nu T \p{D > x}.$$ It follows that, uniformly for $x \geq \gamma \nu T$, and as $T \rightarrow \infty$, $$ \nu T \p{D> x} \leq \p{S_T - \mu_{S_T} > x}.$$ 

Collecting the above upper and lower bounds, 

\begin{enumerate}
    \item the asymptotics of Proposition~\eqref{prop:sum-rp-trsf} yields
    $$\lim_{T \rightarrow \infty} \sup_{x \geq \gamma\nu T} \bigg\lvert \frac{\p{S_T-\mu_{S_T} > x}}{\nu T\big(\p{X+\E{X}K_A>x} + \E{K_A}\p{X>x}\big)}-1 \bigg\rvert = 0.$$
    \item the asymptotics of Proposition~\eqref{prop:sum-hawkes-trsf} yields 
    $$\lim_{T \rightarrow \infty} \sup_{x \geq \gamma\nu T} \bigg\lvert \frac{\p{S_T-\mu_{S_T} > x}}{\E{N_T} \p{X+\bigg(\frac{\E{X}}{1-\E{\kappa_A}}\bigg)\kappa_A>x} }-1 \bigg\rvert = 0,$$ and recalling that $\frac{\nu T}{1-\E{\kappa_A}} = \E{N_T}$, this concludes the proof.
\end{enumerate}
\end{proof}

\begin{rem}
    Early contributions to the (non-uniform) precise large deviations results for non-random sums of i.i.d. regularly varying random variables can be found in \cite{n69}, \cite{n69-2}, \cite{heyde67}, or \cite{n79}. 
    
    The proofs of Proposition~\eqref{prop:maxh} and Proposition~\eqref{prop:sumh} heavily rely on the work of \cite{km99}, in which the authors show that, under the assumption that the process of integer-valued non-negative random variables $(N_T)_{T > 0}$ is such that
    \begin{enumerate}
        \item $N_T/\lambda_T \cvgpr 1, \text{ as } \lambda_T \rightarrow \infty$, where $\lambda_T = \E{N_T}$;
        \item the following limit holds: $$\sum_{k > (1+\delta) \lambda_T} \p{N_T > k}(1+\epsilon)^k \rightarrow 0, \text{ as } \lambda_T \rightarrow \infty.$$
    \end{enumerate}
    Furthermore, if the process $(N_T)$ is independent of the sequence $(X_j)$, by their Theorem 3.1, if the distribution of $X$ is extended regularly varying, for any $\gamma > 0$, $$\lim_{T \rightarrow \infty} \sup_{x \geq \gamma \lambda_T} \bigg\lvert \frac{\p{S_T-\mu_{S_T} > x}}{\lambda_T \p{X > x}}-1 \bigg\rvert = 0, \text{ and } \lim_{T \rightarrow \infty} \sup_{x \geq \gamma \lambda_T} \bigg\lvert \frac{\p{\max_{1 \leq j \leq N_T} X_j > x}}{\lambda_T \p{X > x}}-1 \bigg\rvert = 0$$ where $S_T = \sum_{j=1}^{N_T} X_j.$ Note that the authors show that the Poisson process $C_T$ satisfies the assumptions above, but the second condition is difficult to show for more complicated processes. Hence, the trick is to bound the processes at hand in this work by a process governed by an independent variable, in our context $C_T$ which is Poisson distributed and satisfies the settings of \cite{km99}. 

    Note that the work in \cite{km99} extends the precise large deviation principles already studied in \cite{ch91} (in the case of non-random sums) to the case of random sums.
    
    In \cite{tsjz01}, the authors relax the two assumptions used in \cite{km99} and mentioned above, and reduce them into the single condition that $$\E{N_T^{\beta + \epsilon} \I{N_T > (1+\delta)\lambda_T}} = \cO(\lambda_T), \text{ as } T \rightarrow \infty,$$ for fixed $\epsilon, \delta > 0$ small and $\beta$ the (upper) index of extended regular variation, and prove similar precise large deviation results as \cite{km99}. In \cite{nty04}, the authors study another subclass of the subexponential family, namely the consistently varying random variables, and prove similar precise large deviations under the same conditions as \cite{tsjz01}. 

    Under the assumption that the sequence $(X_j)$ exhibits negative dependence, i.e. $$\p{\bigcap_{j=1}^n \{X_j \leq x_j \}} \leq M \prod_{j=1}^n \p{X_j \leq x_j} \text{ and } \p{\bigcap_{j=1}^n \{X_j > x_j \}} \leq M \prod_{j=1}^n \p{X_j > x_j}$$ for some $M > 0$, all $x_1, \ldots, x_n \in \R$, more recent literature such as \cite{tang06} or \cite{liu09} propose extensions and similar results to those of \cite{nty04} under the same consistently varying random variables.   

    While our framework is more restrictive on the aspect that our sequence $(X_j)_{1 \leq j \leq N_T}$ has elements that are regularly varying, which is a subclass of the extended regularly varying distributions, and that furthermore the elements of the sequence are independent, knowledge of the tail asymptotics of the cluster functionals allowed us to derive expressions that resemble known precise large deviations principles for random maxima and sums of independent random variables, even though, clearly, $N_T$ and $(X_j)$ are dependent over a time window $[0,T]$. This comes at the cost of an extra term, for the sums the marks over a finite time interval, of an extra left-over effect $\E{\varepsilon_T}$ that vanishes as $T$ becomes large. 
    
\end{rem}

\appendix 
\section{}\label{app:1}
\subsection*{Proof of Proposition~\eqref{prop:max-rp-trsf}}

\begin{proof}[Proof of Proposition~\eqref{prop:max-rp-trsf}]
By conditioning and using the independence of $X$ and $X_j$, $j\ge 1$, and that of $K_A$ and $X_j$, $j\ge 1$, we obtain
\begin{align}
\p{H^{R}>x} &= 1-\sum_{k=0}^{\infty} \mathbb{P} \big(X \leq x \; \vert \; K_A=k \big) \big(\p{X \leq x}\big)^k\p{K=k} \nonumber \\
&= 1-\sum_{k=0}^{\infty} \mathbb{P} \big(X \leq x \; \vert \; K_A=k \big) \exp\big(k \log \big(1-\p{X>x} \big) \big)\p{K=k}. \label{eq:h}
\end{align}

    A Taylor expansion on the exponential term, as $x \rightarrow \infty$ (and hence, as $\p{X > x} \rightarrow 0$ by the integrability of $X$), gives 
    \begin{align*}
    \exp\big(k \log \big(1-\p{X>x} \big) \big) &= \exp\big(-k\p{X>x} - o\big(k\p{X>x}\big)\big) \\ 
&= \big(1-k\p{X>x} + o\big(k\p{X>x}\big)\big) \exp \big(- o\big(k\p{X>x}\big) \big) 
\end{align*}
where the last equality follows by another Taylor expansion of the first exponential term in the second equality, as $x \rightarrow \infty$. 

Plugging the above expansion in Equation~\eqref{eq:h} yields
\begin{align*}
\p{H^{R}>x} &= 1-\sum_{k=0}^{\infty} \p{X \leq x, K_A=k} \exp \big(- o\big(k\p{X>x}\big) \big) \\ 
&\quad + \p{X>x}\sum_{k=0}^{\infty} k \p{X \leq x, K_A=k} \exp \big(- o\big(k\p{X>x}\big) \big)\\ 
&\quad - o(\p{X>x})\sum_{k=0}^{\infty} \p{X \leq x, K_A=k} \exp \big(- o\big(k\p{X>x}\big) \big) \\
&\eqdef 1- B_1 + B_2 - B_3.
\end{align*}

We treat each term separately. For term $1-B_1$, remarking that $1 = \p{X \leq x} + \p{X > x}$, we obtain
\begin{align*} 1-B_1 &= \p{X > x} + \sum_{k=0}^{\infty} \p{X \leq x, K_A = k} \big(1-\exp \big(- o(k\p{X>x})\big)\big)
\end{align*}
Using the basic inequality $1-e^{-x} \leq x$, term $1-B_1$ is bounded by $$0 \leq 1-B_1 \leq \p{X > x} + o(\E{K_A}\p{X>x}),  \text{ as } x \rightarrow \infty.$$ 

For term $B_2$, we can write 
\begin{align*}
    B_2 &= \p{X > x} \sum_{k=0}^{\infty} k \big( \p{X \leq x, K_A=k} \exp\big(- o(k\p{X > x})\big) + \p{K=k}  - \p{K=k} \big) \\
    &= \p{X > x} \E{K_A} - \p{X > x} \sum_{k=0}^{\infty} k \p{X > x, K_A = k} \\ & \quad + \p{X >x} \sum_{k=0}^{\infty} k \p{X \leq x, K_A = k}  \big(\exp\big(- o(k\p{X > x})\big) - 1 \big) \\
    &\eqdef B_{21} - B_{22} + B_{23}.
 \end{align*}
Note that $B_{22}$ is bounded above by $B_{22} \leq \E{K_A} \p{X > x},$ and hence, by a dominated convergence argument and the integrability of $X$, we have that $$ \p{X > x} \sum_{k=0}^{\infty} k \p{X > x, K_A=k} = o\big(\p{X>x}\big), \text{ as } x \rightarrow \infty.$$ 

For term $B_{23}$, which is negative since for all $k \geq 0$, $0 \leq e^{-o(k\p{X>x})} \leq 1$, we bound it below by 
\begin{align*}- \p{X > x} \sum_{k=0}^{\infty} k & \p{X \leq x, K_A = k} \\ &\leq \p{X >x} \sum_{k=0}^{\infty} k \p{X \leq x, K_A = k}  \big(\exp\big(- o(k\p{X > x})\big) - 1 \big) \\ &\qquad \qquad \qquad \leq 0,\end{align*} and hence, by a dominated convergence argument, we obtain that, as $x \rightarrow \infty$, $$\p{X > x} \sum_{k=0}^{\infty} k \p{X \leq x, K_A = k}  \big(\exp\big(- o(k\p{X > x})\big) - 1 \big) = o\big(\p{X>x}\big).$$ Collecting the above results, we see that, essentially, $$B_2 = \p{X>x}\E{K_A} + o\big(\p{X>x}\big),  \text{ as } x \rightarrow \infty.$$
Finally, by very similar arguments to those employed for $B_2$ and omitted for brevity, $$B_3 = o\big(\p{X>x}\big),  \text{ as } x \rightarrow \infty.$$

Collecting the above, it essentially follows that $$P(H^{R} >x)=\p{X>x}+\p{X > x}\E{K_A} + o\big(\p{X>x}\big), \text{ as } x \rightarrow \infty.$$ The desired result follows at once by taking the limit, as $x \rightarrow \infty$, and upon using the assumption that the limiting Radon measure is non-null on the subspace $\{(x_1, x_2) \in \R^2_{+, \bZ}: x_1 > 1\}$, which implies by means of Example~\eqref{ex:proj} that $H^{R}$ is regularly varying with index $\alpha > 1$. 
\end{proof}

\section{}\label{app:2}
\subsection*{Proof of Proposition~\eqref{prop:max-hawkes-trsf}}
\begin{proof}[Proof of Proposition~\eqref{prop:max-hawkes-trsf}]
By conditioning and using the independence of $X$ and $H^{H}$, and that of $L_A$ and $H^{H}$, we obtain as in the proof of Proposition~\eqref{prop:max-rp-trsf}
\begin{align*}
\p{H^{H}>x} &= 1-\sum_{k=0}^{\infty} \mathbb{P} \big(X \leq x \; \vert \; L_A=k \big) \exp\big(k \log \big(1-\p{H^{H}>x} \big) \big)\p{L_A=k}.
\end{align*}
     A Taylor expansion on the exponential term, as $x \rightarrow \infty$ (and hence, as $\p{H^{H} > x} \rightarrow 0$ by the integrability of $H^{H}$), yields, as $x \rightarrow \infty$, 
     \begin{multline*}
         \exp\big(k \log \big(1-\p{H^{H}>x}\big) \big) \\ = \big(1-k\p{H^{H}>x} + o\big(k\p{H^{H}>x}\big)\big) \exp \big(- o\big(k\p{H^{H}>x}\big) \big).
     \end{multline*}

From here on, the proof follows the same lines as that of Proposition~\eqref{prop:sum-rp-trsf}, except that the tail of $H^{H}$ appears here rather than the tail of $X$. The proof is omitted for brevity, but we retrieve 
$$P(H^{H} >x)=\p{X>x}+\E{L_A}\p{H^{H} > x} + o\big(\p{H^{H}>x}\big), \text{ as } x \rightarrow \infty$$
which yields the desired result. 
\end{proof}

\section{}\label{app:3}
\subsection*{Proof of Proposition~\eqref{prop:sum-rp-trsf}}
We need the following Lemma in order to prove Lemma~\eqref{lem:rvcomp} used in the proof of Proposition~\eqref{prop:sum-rp-trsf}: 
\begin{lem}\label{lem:negli}
    Suppose $(X, K_A)$ is regularly varying with index $\alpha \in (n, n+1)$, for $n \in \N$. Additionally, suppose that $X$ has negligible tails with respect to $X + \E{X}K_A$, i.e. $\p{X > x} = o(\p{X + \E{X}K_A > x})$, as $x \rightarrow \infty.$ Then, $$\varphi_{X}^{(n+1)}(s) = o\big(\varphi_{X+\E{X}K_A}^{(n+1)}(s)\big), \text{ as } s \rightarrow 0^+.$$
\end{lem}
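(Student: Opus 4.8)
The plan is to pass to the tail function $\bar F_Y(x)\defeq\mathbb{P}(Y>x)$ and compare the quantities $(-1)^{n+1}\varphi_Y^{(n+1)}(s)=\mathbb{E}[Y^{n+1}e^{-sY}]$ for $Y=X$ and for the modulus $M\defeq X+\mathbb{E}[X]K_A$. First I would record the two inputs: by Remark~\eqref{rem:modulus}, $(X,K_A)$ being regularly varying makes $M$ regularly varying with the same index $\alpha\in(n,n+1)$, say $\bar F_M(x)\sim x^{-\alpha}L_M(x)$ with $L_M$ slowly varying; the hypothesis is $\bar F_X(x)=o(\bar F_M(x))$, and trivially $0\leq X\leq M$ so $\bar F_X\leq\bar F_M$ pointwise. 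Next, for any nonnegative $Y$ and $s>0$, Fubini's theorem gives the integration-by-parts identity
\[
\mathbb{E}[Y^{n+1}e^{-sY}]=\int_0^\infty\big((n+1)x^n-sx^{n+1}\big)e^{-sx}\bar F_Y(x)\,dx\;\leq\;(n+1)\,I_Y(s),\qquad I_Y(s)\defeq\int_0^\infty x^n e^{-sx}\bar F_Y(x)\,dx,
\]
the inequality simply discarding the nonpositive second term. Since $\varphi_X^{(n+1)}$ and $\varphi_M^{(n+1)}$ carry the same sign $(-1)^{n+1}$, it suffices to prove $\mathbb{E}[X^{n+1}e^{-sX}]=o\big(\mathbb{E}[M^{n+1}e^{-sM}]\big)$ as $s\to0^+$.

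The proof then rests on two estimates of the same order $s^{\alpha-n-1}L_M(1/s)$, which diverges since $\alpha<n+1$ (consistent with $|\varphi_M^{(n+1)}(s)|\to\infty$, Remark~\eqref{rem:kar}). For the denominator, restricting the expectation to $\{1/(2s)<M\leq1/s\}$ and using the regular variation of $\bar F_M$ together with the slow variation of $L_M$ yields $\mathbb{E}[M^{n+1}e^{-sM}]\geq c\,s^{\alpha-n-1}L_M(1/s)$ for some $c>0$ and all small $s$. For the numerator, a change of variables $x=u/s$ and the uniform convergence theorem for regularly varying functions (equivalently, Laplace--Tauberian asymptotics applied to $x\mapsto x^n\bar F_M(x)$, which is regularly varying of index $n-\alpha\in(-1,0)$) give $I_M(s)\sim\Gamma(n+1-\alpha)\,s^{\alpha-n-1}L_M(1/s)$; then, fixing $\epsilon>0$ and choosing $x_0$ with $\bar F_X(x)\leq\epsilon\,\bar F_M(x)$ for $x\geq x_0$,
\[
I_X(s)=\int_0^{x_0}x^n e^{-sx}\bar F_X(x)\,dx+\int_{x_0}^\infty x^n e^{-sx}\bar F_X(x)\,dx\;\leq\;\frac{x_0^{\,n+1}}{n+1}+\epsilon\,I_M(s).
\]
Dividing by $s^{\alpha-n-1}L_M(1/s)\to\infty$ and letting $\epsilon\downarrow0$ gives $I_X(s)=o\big(s^{\alpha-n-1}L_M(1/s)\big)$. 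Combining, $\mathbb{E}[X^{n+1}e^{-sX}]\leq(n+1)I_X(s)=o\big(s^{\alpha-n-1}L_M(1/s)\big)=o\big(\mathbb{E}[M^{n+1}e^{-sM}]\big)$, which is the claim.

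The main obstacle is that the weight $(n+1)x^n-sx^{n+1}$ in the integration-by-parts identity changes sign, so one cannot compare the representations of $\varphi_X^{(n+1)}$ and $\varphi_M^{(n+1)}$ termwise; the fix is to route everything through the one-sided bound $\mathbb{E}[Y^{n+1}e^{-sY}]\leq(n+1)I_Y(s)$ and to pin down the common order $s^{\alpha-n-1}L_M(1/s)$ both as the exact asymptotics of $I_M(s)$ and as a lower bound for $\mathbb{E}[M^{n+1}e^{-sM}]$. The secondary point is to make the pointwise relation $\bar F_X=o(\bar F_M)$ act uniformly under the integral sign, which the split at $x_0$ achieves: the $[0,x_0]$ contribution is an $s$-independent constant, negligible against the diverging $I_M(s)$. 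The remaining items — the Fubini justification and the Potter-bound domination in the change-of-variables step — are routine.
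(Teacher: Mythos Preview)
Your proof is correct and follows the same overall strategy as the paper --- integrate by parts to pass to the tail, and compare everything against the common scale $s^{\alpha-n-1}L_M(1/s)$ coming from the regular variation of $M=X+\E{X}K_A$ --- but the technical packaging is different in two respects. First, the paper keeps the full kernel $(n+1)y^{n}e^{-y}-y^{n+1}e^{-y}$ after the substitution $y=sx$, handles its sign change by taking absolute values, and splits the $y$-integral at a small fixed $\varepsilon$ (i.e.\ at $x=\varepsilon/s$), invoking Proposition~4.1.2 in \cite{bgt89} on each piece; you instead discard the negative term at once to get the one-sided bound $\E{Y^{n+1}e^{-sY}}\leq(n+1)I_Y(s)$, split $I_X(s)$ at a \emph{fixed} threshold $x_0$, and use that the $[0,x_0]$ piece is an $s$-free constant, hence negligible against the diverging scale. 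Second, the paper appeals to Karamata's Tauberian Theorem~\eqref{thm:kar} for the denominator, whereas you produce an explicit lower bound $\E{M^{n+1}e^{-sM}}\geq c\,s^{\alpha-n-1}L_M(1/s)$ by restricting to $\{1/(2s)<M\leq1/s\}$. Your route is arguably cleaner --- it sidesteps the sign issue entirely and avoids the two-parameter $\varepsilon$/$\delta_\varepsilon$ bookkeeping --- at the small cost of needing the separate lower bound for the denominator. Both arguments ultimately rely on the same Abelian input for $I_M(s)$ and the same stochastic domination $X\leq M$.
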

\begin{proof}
    Note that \begin{align*}\varphi_{X}^{(n+1)}(s) &= \E{(-X)^{n+1} e^{-sX}} = \int_0^{\infty} x^{n+1} e^{-sx} \diff(-\p{X \geq x}) \\
    &= \big[-x^{n+1}e^{-sx} \p{X \geq x}\big]_{0}^{\infty} + \int_{0}^{\infty} \big((n+1) x^{n} e^{-sx} - sx^{n+1} e^{-sx} \big) \p{X \geq x} \diff x.
    \end{align*}
    The first term above vanishes; upon substituting, the second term yields 
    \begin{multline*}
         \int_{0}^{\infty} \big((n+1) x^{n} e^{-sx} - sx^{n+1} e^{-sx} \big) \p{X \geq x} \diff x \\ = \int_{0}^{\infty} \big((n+1) (y/s)^{n} e^{-y} - s(y/s)^{n+1} e^{-y} \big) \p{X \geq y/s} \frac{\diff y}{s} \\
        = s^{-(n+1)} \int_{0}^{\infty} \big((n+1) y^{n} e^{-y} - y^{n+1} e^{-y} \big) \p{X \geq y/s} \diff y.
    \end{multline*}
    Fix $\varepsilon > 0$ small and split the above integral into 
    \begin{multline*}
        s^{-(n+1)} \int_{0}^{\infty} \big((n+1) y^{n} e^{-y} - y^{n+1} e^{-y} \big) \p{X \geq y/s} \diff y \\ 
        = s^{-(n+1)}\bigg(\int_0^{\varepsilon} \big((n+1) y^{n} e^{-y} - y^{n+1} e^{-y} \big) \p{X \geq y/s} \diff y \\
         + \int_{\varepsilon}^{\infty} \big((n+1) y^{n} e^{-y} - y^{n+1} e^{-y} \big) \p{X \geq y/s} \diff y \bigg) 
    \eqdef I_1 + I_2. 
    \end{multline*}

Consider integral $I_2$ first. For some values $y \in [\varepsilon, \infty)$ the expression $(n+1) y^{n} e^{-y} - y^{n+1} e^{-y}$ might be negative, so bound $I_2$ above by its absolute value. Additionally, upon using the hypothesis of negligibility of the tail of $X$ with respect to the tail of $X+\E{X} K_A$, it follows that, for any $\delta >0$, for any fixed $\varepsilon>0$ and $y > \varepsilon$, there is $s_0$ such that for all $s \leq s_0$, $\p{X \geq y/s} \leq \delta_{\varepsilon} \p{X+ \E{X}K_A > y/s}.$ All in all, because $X + \E{X}K_A$ is regularly varying with index $\alpha \in (n, n+1)$, this yields as an upper bound 
\begin{align*}
    \lvert I_2 \rvert &\leq s^{-(n+1)} \int_{\varepsilon}^{\infty} \big\lvert  \big((n+1) y^{n} e^{-y} - y^{n+1} e^{-y} \big) \big\rvert \delta_{\varepsilon} \p{X + \E{X}K_A > y/s} \diff y \\
    &\leq s^{-(n+1)} \int_{\varepsilon}^{\infty} \big\lvert \big((n+1) y^{n} e^{-y} - y^{n+1} e^{-y} \big)\big\rvert  \delta_{\varepsilon} (y/s)^{-\alpha} L_{X + \E{X}K_A}(y/s) \diff y \\
    &\leq s^{\alpha-(n+1)} \delta_{\varepsilon} \int_{\varepsilon}^{\infty}\big\lvert  \big((n+1) y^{n} e^{-y} - y^{n+1} e^{-y} \big)\big\rvert  y^{-\alpha} L_{X + \E{X}K_A}(y/s) \diff y. 
\end{align*}
Because $((n+1) y^{n} e^{-y} - y^{n+1} e^{-y} \big) y^{-\alpha}$ is integrable over $[0, \infty)$, it follows from Proposition 4.1.2 (b) in \cite{bgt89} that, as $s \rightarrow 0^+$,
\begin{multline*}
    s^{\alpha-(n+1)} \delta_{\varepsilon} \int_{\varepsilon}^{\infty} \big\lvert \big((n+1) y^{n} e^{-y} - y^{n+1} e^{-y} \big)\big\rvert  y^{-\alpha} L_{X + \E{X}K_A}(y/s) \diff y \\ \sim s^{\alpha-(n+1)} L_{X + \E{X}K_A}(1/s) \delta_{\varepsilon} \int_{\varepsilon}^{\infty} \big\lvert  \big((n+1) y^{n} e^{-y} - y^{n+1} e^{-y} \big)\big\rvert  y^{-\alpha} \diff y.
\end{multline*}

For each fixed value of $\varepsilon>0$, and as $s \rightarrow 0^+$, it is possible to take $\delta_{\varepsilon}>0$ as small as needed so that to guarantee that 
$$\delta_{\varepsilon}\int_{\varepsilon}^{\infty} \big\lvert \big((n+1) y^{n} e^{-y} - y^{n+1} e^{-y} \big)\big\rvert  y^{-\alpha} \diff y = o(1) \text{ as } s \rightarrow 0^+.$$

This implies that, for a fixed $\epsilon>0$, as $s \rightarrow 0^+$, 
\begin{align*}
    \frac{\lvert I_2 \rvert}{s^{\alpha-(n+1)} L_{X + \E{X}K_A}(1/s)} &\leq \frac{ s^{\alpha-(n+1)} L_{X + \E{X}K_A}(1/s) \delta_{\varepsilon} \int_{\varepsilon}^{\infty} \big\lvert \big((n+1) y^{n} e^{-y} - y^{n+1} e^{-y} \big)\big\rvert y^{-\alpha} \diff y}{s^{\alpha-(n+1)} L_{X + \E{X}K_A}(1/s)} \\
&\leq o(1).
\end{align*}

Consider now integral $I_1$. Because $X$ is stochastically dominated by $X + \E{X}K_A$, and using the regular variation of the latter quantity, this yields 
\begin{align*}
    \lvert I_1 \rvert &\leq s^{-(n+1)} \int_{0}^{\varepsilon} \big\lvert \big((n+1) y^{n} e^{-y} - y^{n+1} e^{-y} \big)\big\rvert \p{X + \E{X}K_A > y/s} \diff y \\
    &\leq s^{\alpha-(n+1)} \int_{0}^{\varepsilon} \big\lvert\big((n+1) y^{n} e^{-y} - y^{n+1} e^{-y} \big)\big\rvert y^{-\alpha} L_{X + \E{X}K_A}(y/s) \diff y.
\end{align*}
Because the function $\big((n+1) y^{n} e^{-y} - y^{n+1} e^{-y} \big) y^{-\alpha}$ is integrable over $[0, \varepsilon)$, it follows by Proposition 4.1.2 (a) in \cite{bgt89} that, as $s \rightarrow 0^+$, 
\begin{multline*}
    s^{\alpha-(n+1)} \int_{0}^{\varepsilon} \big\lvert \big((n+1) y^{n} e^{-y} - y^{n+1} e^{-y} \big)\big\rvert y^{-\alpha} L_{X + \E{X}K_A}(y/s) \diff y \\ \sim s^{\alpha-(n+1)} L_{X + \E{X}K_A}(1/s) \int_{0}^{\varepsilon} \big\lvert\big((n+1) y^{n} e^{-y} - y^{n+1} e^{-y} \big)\big\rvert y^{-\alpha} \diff y.
\end{multline*}
It follows that 
\begin{align*}
    \frac{\lvert I_1 \rvert}{s^{\alpha-(n+1)} L_{X + \E{X}K_A}(1/s)} &\leq \frac{s^{\alpha-(n+1)} L_{X + \E{X}K_A}(1/s) \int_{0}^{\varepsilon} \big\lvert\big((n+1) y^{n} e^{-y} - y^{n+1} e^{-y} \big)\big\rvert y^{-\alpha} \diff y}{s^{\alpha-(n+1)} L_{X + \E{X}K_A}(1/s)} \\
&\leq \int_{0}^{\varepsilon} \big\lvert\big((n+1) y^{n} e^{-y} - y^{n+1} e^{-y} \big)\big\rvert y^{-\alpha} \diff y
\end{align*}
and, because one can take $\varepsilon >0$ as small as needed, this shows that 
    $$I_1 + I_2 = o\big(s^{\alpha-(n+1)} L_{X + \E{X}K_A}(1/s)\big), \text{ as } s \rightarrow 0^+.$$
Finally, because $n+1 = \lceil \alpha \rceil$, and using Karamata's Tauberian Theorem~\eqref{thm:kar}, which implies that $\varphi_{X+\E{X}K_A}^{(n+1)} \sim C_{\alpha} s^{\alpha - \lceil \alpha \rceil} L_{X+\E{X}K_A}(1/s), \text{ as } s \rightarrow 0^+$, this shows that $$\varphi_{X}^{(n+1)}(s) = o(\varphi_{X+\E{X}K_A}^{(n+1)}), \text{ as } s \rightarrow 0^+.$$ \end{proof}

\begin{lem}\label{lem:rvcomp} 
    Suppose $(X, K_A)$ is regularly varying with index $\alpha \in (1,2)$ and slowly varying function $L_{X + \E{X}K_A}(\cdot)$ and $X$ has a negligible tail compared to the modulus $X + \E{X}K_A$, i.e. $\p{X > x} = o(\p{X + \E{X}K_A > x}), \text{ as } x \rightarrow \infty$. Then, 
   $$\frac{\varphi_X^{(1)}(s)+\E{X}}{s} = o\big( \varphi_{X + \E{X}K_A}^{(2)}(s) + \E{K_A}\varphi_X^{(2)}(s)\big), \text{ as } s \rightarrow 0^+.$$
\end{lem}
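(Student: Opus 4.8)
The plan is to reduce the claim to the one-sided estimate
\[
\frac{\varphi_X^{(1)}(s)+\E{X}}{s}=o\big(\varphi_{X+\E{X}K_A}^{(2)}(s)\big),\qquad s\to 0^+,
\]
which is enough because $\varphi_{X+\E{X}K_A}^{(2)}(s)$ and $\varphi_X^{(2)}(s)$ are both nonnegative, so the right-hand side of the Lemma dominates $\varphi_{X+\E{X}K_A}^{(2)}(s)$. The key observation is that the left-hand side is an \emph{average} of $\varphi_X^{(2)}$ near the origin: since $\varphi_X^{(1)}(s)=-\E{Xe^{-sX}}$ is differentiable on $(0,\infty)$ with $(\varphi_X^{(1)})'(s)=\E{X^2e^{-sX}}=\varphi_X^{(2)}(s)\geq 0$, and $\varphi_X^{(1)}(s)\to-\E{X}$ as $s\to 0^+$ by dominated convergence (using $\E{X}<\infty$, which holds as $\alpha>1$), the fundamental theorem of calculus on $[\delta,s]$ followed by monotone convergence as $\delta\to 0^+$ gives
\[
\varphi_X^{(1)}(s)+\E{X}=\int_0^s\varphi_X^{(2)}(u)\,\diff u,
\]
an identity valid even though $\varphi_X^{(2)}(0^+)=\infty$ for $\alpha\in(1,2)$. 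So the quantity to estimate is $\tfrac1s\int_0^s\varphi_X^{(2)}(u)\,\diff u$.

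\textbf{Main step.} Next I would invoke Lemma~\eqref{lem:negli} with $n=1$: the negligibility hypothesis gives $\varphi_X^{(2)}(u)=o\big(\varphi_{X+\E{X}K_A}^{(2)}(u)\big)$ as $u\to 0^+$. Fix $\epsilon>0$ and choose $u_0>0$ with $\varphi_X^{(2)}(u)\leq\epsilon\,\varphi_{X+\E{X}K_A}^{(2)}(u)$ for all $0<u\leq u_0$; then for $0<s\leq u_0$,
\[
\frac1s\int_0^s\varphi_X^{(2)}(u)\,\diff u\;\leq\;\frac{\epsilon}{s}\int_0^s\varphi_{X+\E{X}K_A}^{(2)}(u)\,\diff u.
\]
By Remark~\eqref{rem:modulus}, $X+\E{X}K_A$ is regularly varying with index $\alpha\in(1,2)$, so Karamata's Tauberian Theorem~\eqref{thm:kar} gives $\varphi_{X+\E{X}K_A}^{(2)}(u)\sim C_\alpha u^{\alpha-2}L_{X+\E{X}K_A}(1/u)$, a function regularly varying at $0$ with index $\alpha-2\in(-1,0)$. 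Since $\alpha-2>-1$, Karamata's theorem for integrals (Theorem 1.5.11 in \cite{bgt89}, applied after the change of variables $v=1/u$) yields $\int_0^s\varphi_{X+\E{X}K_A}^{(2)}(u)\,\diff u\sim\frac{s}{\alpha-1}\,\varphi_{X+\E{X}K_A}^{(2)}(s)$ as $s\to 0^+$. Combining the last two displays gives $\limsup_{s\to 0^+}\big(\tfrac1s\int_0^s\varphi_X^{(2)}(u)\,\diff u\big)\big/\varphi_{X+\E{X}K_A}^{(2)}(s)\leq\epsilon/(\alpha-1)$, and letting $\epsilon\downarrow 0$ shows this $\limsup$ equals $0$; this is the one-sided estimate, hence the Lemma.

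\textbf{Anticipated obstacle.} The two points that need care in the full write-up are the integral representation $\varphi_X^{(1)}(s)+\E{X}=\int_0^s\varphi_X^{(2)}(u)\,\diff u$ despite the singularity of $\varphi_X^{(2)}$ at the origin (handled by monotone convergence together with continuity of $\varphi_X^{(1)}$ at $0$), and the application of Karamata's integral theorem at $0$ to the negative-index regularly varying function $\varphi_{X+\E{X}K_A}^{(2)}$; the remainder is a routine $\epsilon$-argument powered by Lemma~\eqref{lem:negli}. It is worth emphasising that $X$ itself need not be regularly varying here, so the negligibility must be transferred through Lemma~\eqref{lem:negli} rather than through a direct Karamata estimate for $X$.
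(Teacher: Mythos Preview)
Your proof is correct and genuinely different from the paper's. The paper does not use the integral identity $\varphi_X^{(1)}(s)+\E{X}=\int_0^s\varphi_X^{(2)}(u)\,\diff u$ at all; instead it writes $\frac{\varphi_X^{(1)}(s)+\E{X}}{s}=\E{\tfrac{X(1-e^{-sX})}{s}}$, integrates by parts to obtain an integral against the tail $\p{X\geq x}$, substitutes $y=sx$, and then splits that integral at a fixed $\varepsilon>0$. On $[\varepsilon,\infty)$ the tail negligibility is used directly (via a $\delta_\varepsilon$), on $[0,\varepsilon)$ stochastic domination $X\leq X+\E{X}K_A$ is used, and both pieces are controlled by Proposition~4.1.2 in \cite{bgt89} before the final comparison with $\varphi_{X+\E{X}K_A}^{(2)}$ through Theorem~\eqref{thm:kar} and Lemma~\eqref{lem:negli}. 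Your route is shorter and more modular: once the averaging identity is in place, Lemma~\eqref{lem:negli} does all the heavy lifting and Karamata's integral theorem closes the argument in one stroke, with no integral splitting or Taylor expansion needed. The paper's approach, on the other hand, is self-contained at the level of tails and does not require recognising that the left-hand side is an integrated version of $\varphi_X^{(2)}$; it also mirrors the structure of the proof of Lemma~\eqref{lem:negli} itself, which may explain why the authors chose it.
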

\begin{proof}
    Let $\alpha \in (1,2)$. We assess 
    \begin{align*}
    \frac{\varphi_X^{(1)}(s)+\E{X}}{s} &= \E{\frac{X\big(1-e^{-sX}\big)}{s}} \\
    &= \int_{0}^{\infty} \frac{x(1-e^{-sx})}{s} \diff (-\p{X \geq x)} \\
    &= \bigg[-\frac{x(1-e^{-sx})}{s} \p{X \geq x} \bigg]^{\infty}_{0} + \int_{0}^{\infty} \bigg(\frac{\big(1-e^{-sx}\big)}{s} + x e^{-sx} \bigg) \p{X \geq x} \diff x. 
    \end{align*}
 Since $X$ is integrable, one has that $x \p{X \geq x} = o(1) $, as $x \rightarrow \infty$, so that the first expression on the right-hand side above vanishes; for the second integral, fix $\varepsilon > 0$ small and write
	\begin{align*} 
		\int_{0}^{\infty} \bigg(\frac{\big(1-e^{-sx}\big)}{s} + x e^{-sx} \bigg) \p{X \geq x} \diff x &= \int_0^{\infty} s^{-2}(1-e^{-y}+ye^{-y}) \p{X \geq y/s} \diff y, \\
   &= \int_0^{\varepsilon} s^{-2}(1-e^{-y}+ye^{-y}) \p{X \geq y/s} \diff y \\
  &\quad + \int_{\varepsilon}^{\infty} s^{-2}(1-e^{-y}+ye^{-y}) \p{X \geq y/s} \diff y  \\
  &\eqdef (I_1 + I_2). 
  \end{align*}

Consider integral $I_2$ first. A similar argument as in the proof of Lemma~\eqref{lem:negli} for integral $I_2$ there yields the following upper bound
  \begin{align*}
     I_2 &\leq \int_{\varepsilon}^{\infty} s^{-2}(1-e^{-y}+ye^{-y}) \delta_{\varepsilon} \p{X + \E{X}K_A > y/s} \diff y \\
     &\leq s^{\alpha-2} \delta_{\varepsilon} \int_{\varepsilon}^{\infty} (1-e^{-y}+ye^{-y})y^{-\alpha} L_{X + \E{X}K_A}(y/s) \diff y.
 \end{align*}
As $\varepsilon \rightarrow 0$, the above integral diverges. But for a (small) fixed value of $\varepsilon>0$, upon using Proposition 4.1.2 (b) in \cite{bgt89}, as $s \rightarrow 0^+$, 
\begin{multline*}
    s^{\alpha-2}\int_{\varepsilon}^{\infty} (1-e^{-y}+ye^{-y})y^{-\alpha} \delta_{\varepsilon} L_{X + \E{X}K_A}(y/s) \diff y \\ 
    \sim s^{\alpha-2} L_{X + \E{X}K_A}(1/s)  \delta_{\varepsilon} \int_{\varepsilon}^{\infty} (1-e^{-y}+ye^{-y})y^{-\alpha} \diff y.
\end{multline*}

As $s \rightarrow 0^+$, and as in the proof of Lemma~\eqref{lem:negli}, it is possible to take $\delta_{\varepsilon} > 0$ as small as needed in order to ensure that
$$\delta_{\varepsilon}\int_{\varepsilon}^{\infty} (1-e^{-y}+ye^{-y}) y^{-\alpha} \diff y = o(1) \text{ as } s \rightarrow 0^+.$$

This implies that, as $s \rightarrow 0^+$,
\begin{align*}
    \frac{I_2}{s^{\alpha-2} L_{X + \E{X}K_A}(1/s)} &\leq\frac{s^{\alpha-2} L_{X + \E{X}K_A}(1/s) \delta_{\varepsilon} \int_{\varepsilon}^{\infty} (1-e^{-y}+ye^{-y})y^{-\alpha}  \diff y}{s^{\alpha-2} L_{X + \E{X}K_A}(1/s)} \leq o(1).
\end{align*}
Consider now integral $I_1$. Because $X$ is stochastically dominated by $X + \E{X}K_A$, for any fixed $\varepsilon>0$, we have 
\begin{align*}
    I_1 &\leq \int_{0}^{\varepsilon} s^{-2} (1-e^{-y}+ye^{-y}) \p{X + \E{X} K_A \geq y/s} \diff y \\
    &= \int_0^{\varepsilon} s^{\alpha-2}(1-e^{-y}+ye^{-y}) y^{-\alpha} L_{X + \E{X}K_A}(y/s) \diff y.
\end{align*}

  A Taylor expansion on the function $f(y)=e^{-y}+ye^{-y}$ yields $1 = e^{-y}-ye^{-y}+2ye^{-y}-y^2e^{-y} + o(-y)$, and we get that 
$$   \int_0^{\varepsilon} s^{\alpha-2}(1-e^{-y}+ye^{-y}) y^{-\alpha}  L_{X + \E{X}K_A}(y/s) \diff y  \approx  s^{\alpha-2}\int_0^{\varepsilon} (2ye^{-y}-y^2e^{-y}) y^{-\alpha} L_{X + \E{X}K_A}(y/s) \diff y.
$$
Because the integral $\int_{0}^{\varepsilon} (2ye^{-y}-y^2e^{-y})y^{-\alpha} \diff y < \infty$ for $\alpha \in (1,2)$ and $\varepsilon > 0$ small, even if it is potentially large for values of $\alpha$ close to 2, it follows from Proposition 4.1.2. (a) in \cite{bgt89} that, as $s \rightarrow 0^+$,
$$
     s^{\alpha-2} \int_0^{\varepsilon} (2ye^{-y}-y^2e^{-y}) y^{-\alpha}L_{X + \E{X}K_A}(y/s) \diff y   \sim s^{\alpha-2} L_{X + \E{X}K_A}(1/s) \int_0^{\varepsilon} (2ye^{-y}-y^2e^{-y}) y^{-\alpha} \diff y.
$$
Hence, as $s \rightarrow 0^+$
\begin{align*}
    \frac{I_1}{s^{\alpha-2}L_{X + \E{X}K_A}(1/s)} &\leq \frac{s^{\alpha-2} L_{X + \E{X}K_A}(1/s) \int_0^{\varepsilon} (2ye^{-y}-y^2e^{-y}) y^{-\alpha} \diff y}{s^{\alpha-2}L_{X + \E{X}K_A}(1/s)} \\
    &\leq \int_0^{\varepsilon} (2ye^{-y}-y^2e^{-y}) y^{-\alpha} \diff y
\end{align*}
and because one can take $\varepsilon > 0$ as small as needed, it essentially follows, all in all, that $$(I_1 + I_2) = o(s^{\alpha-2} L_{X + \E{X}K_A}(1/s)), \text{ as } s \rightarrow 0^+.$$ 

Because $X + \E{X}K_A$ is regularly varying, by Karamata's Tauberian Theorem~\eqref{thm:kar}, 
  \begin{align*}
        \frac{s^{\alpha-2} L_{X + \E{X}K_A}(1/s)}{\varphi_{X+\E{X}K_A}^{(2)}(s)+\E{K_A}\varphi_X^{(2)}(s)} 
		&\sim \frac{s^{\alpha-2} L_{X + \E{X}K_A}(1/s)}{C_{\alpha}s^{\alpha-2}L_{X+\E{X}K_A}(1/s) + \E{K_A} \varphi_X^{(2)}(s)}, \text{ as } s \rightarrow 0^+.
    \end{align*}
Applying the result of Lemma~\eqref{lem:negli}, this yields that $$s^{\alpha-2} L_{X + \E{X}K_A}(1/s)= \cO \big(\varphi_{X+\E{X}K_A}^{(2)}(s)+\E{K_A}\varphi_X^{(2)}(s) \big), \text{ as } s \rightarrow 0^+.$$ This yields the desired result. 
\end{proof}





\acks 
\noindent The authors would like to thank the two anonymous referees for their suggestions that helped to shorten the present paper, for pointing out unexplored relevant references, and for numerous helpful comments making it more readable. The authors would also like to acknowledge the French Agence Nationale de la Recherche (ANR) and the project with reference ANR-20-CE40-0025-01 (T-REX project), and more specifically the members of the T-REX project for organising the VALPRED3 and VALPRED4 workshops at the CNRS Centre Paul Langevin in Aussois, during which fruitful discussions led to great improvement of this article. Finally, the authors would like to thank the two anonymous referees for their careful reading of our work and their comments which greatly improved the readability of the present article. 
\fund 
\noindent The are no funding bodies to thank relating to the creation of this article.

\competing 
\noindent There were no competing interests to declare which arose during the preparation or publication process of this article.

%
%
%
%

\bibliography{mybib}
\bibliographystyle{APT}
\footnotesize

\end{document}